\newtheorem{theorem}{Theorem}
\newtheorem{corollary}{Corollary}
\newtheorem{lemma}{Lemma}
\newtheorem{proposition}{Proposition}
\newtheorem{definition}{Definition}
\newtheorem{example}{Example}
\newtheorem{remark}{Remark}
\font\QEDlogofont=msam10 at 10pt
\def\QEDblogo{\hbox{\QEDlogofont\char'004}}
\newif\ifnologo\nologofalse
\newif\iflogo
\newif\ifblogo\blogofalse
\newif\iftopprhead\topprheadfalse
\def\prooffont{\normalsize}
\newenvironment{proof}{\par\addvspace{6pt plus2pt}
\par
\noindent\prooffont{\bf\em Proof:}\hskip6pt\ignorespaces}{%
   \ifblogo\hskip1.2pt
            \blacksquare
   \else
   \ifnologo
   \else
   \hfill
            \QEDblogo
   \fi\fi
\par\addvspace{6pt plus2pt}\global\topprheadfalse}%
\begin{document}

\begin{center}

\begin{large}
{\bf ON A CLASS OF POLYNOMIALS GENERATED BY $F(xt-R(t))$}
\end{large}
\vspace{10pt}

{\bf Mohammed Mesk $^{\rm a,b,}$\footnote{Email:
m\_mesk@yahoo.fr} and Mohammed Brahim Zahaf $^{\rm
a,c,}$\footnote{Email:
m\_b\_zahaf@yahoo.fr}}\vspace{6pt}
\\ \vspace{6pt} $^{\rm a}${\em Laboratoire d'Analyse Non Lin\'eaire et Math\'ematiques Appliqu\'ees,\break
Universit\'e de Tlemcen, BP 119,  13000-Tlemcen, Alg\'erie.
}\\

\vspace{6pt} $^{\rm b}${\em D\'epartement d'\'ecologie et environnement,
\break
Universit\'e de Tlemcen, P\^ole 2, BP 119,  13000-Tlemcen, Alg\'erie.
}\\

\vspace{6pt} $^{\rm c}${\em D\'epartement de Math\'ematiques,
Facult\'e des sciences, \break
Universit\'e de Tlemcen, BP 119,  13000-Tlemcen, Alg\'erie.
}\\

\end{center}
\vspace{1cm}
\begin{abstract}
We investigate polynomial sets $\{P_n\}_{n\geq 0}$ with generating power series of the form $F(xt-R(t))$ and satisfying, for $n\geq 0$, the $(d+1)$-order recursion $xP_n(x)=P_{n+1}(x)+\sum_{l=0}^{d}\gamma_{n}^{l}P_{n-l}(x)$, where $\{\gamma_{n}^{l}\}$ is  a complex sequence for  $0\leq l\leq d$, $P_0(x)=1$ and $P_n(x)=0$ for all negative integer $n$. We show that the formal power series $R(t)$ is a polynomial of degree at most  $d+1$ if certain coefficients of $R(t)$ are null or if $F(t)$ is a generalized hypergeometric series. Moreover, for the $d$-symmetric case we demonstrate that $R(t)$ is the monomial of degree $d+1$ and $F(t)$ is  expressed by hypergeometric series.
\vspace{6pt}

{\bf Keywords: } Generating functions, $d$-orthogonal polynomials; recurrence relations; generalized hypergeometric series.

\vspace{6pt}
{\bf AMS Subject Classification: } 12E10, 33C47; 33C20

\end{abstract}

\section{Introduction}

In \cite{alsalam,Bachhaus,Bencheikh} the authors used different methods to show that the orthogonal polynomials defined by a generating function of the form $F(xt-\alpha t^2)$ are the ultraspherical and Hermite polynomials. On the other hand, the author in \cite{anshelev}  found (even if $F$ is a formal power series) that the orthogonal polynomials are the ultraspherical, Hermite and Chebychev polynomials of the first kind. Motivated by the problem, posed in \cite{anshelev}, of describing (all or just orthogonal) polynomials with generating functions $F(xU(t)-R(t))$ we have generalized in \cite{meskzahaf} the  above results  by proving the following: 
\begin{theorem} \cite{meskzahaf}\label{thmm0}
Let $F(t)=\sum_{n\geq 0}\alpha_nt^n$ and $R(t)=\sum_{n\geq
1}R_nt^n/n$ be formal power series where $\{\alpha_n\}$ and
$\{R_n\}$ are complex sequences with $\alpha_0=1$ and $R_1=0$.
Define the polynomial set $\{P_n\}_{n\geq 0}$ by
\begin{equation}\label{gf00}
F(xt-R(t))=\sum_{n\geq 0}\alpha_nP_n(x)t^n.
\end{equation}
If this polynomial set (which is automatically monic) satisfies the three-term recursion
relation
\begin{eqnarray}\label{gf6}
\left\{
\begin{array}{l}
xP_n(x)=P_{n+1}(x)+\beta_nP_n(x)+\omega_nP_{n-1}(x),\quad n\geq
0,\\
P_{-1}(x)=0,\;\;P_0(x)=1
\end{array}
\right.
\end{eqnarray} 
where $\{\beta_n\}$ and $\{\omega_n\}$ are complex sequences,
then we have:\\
a) If $R_2=0$ and $\alpha_n\neq 0$ for $n\geq 1$, then $R(t)=0$, $F(t)$ is arbitrary and $F(xt)=\sum_{n\geq 0}\alpha_nx^nt^n$
generates the monomials $\{x^n\}_{n\geq 0}$.\\
b) If $\alpha_1R_2\neq 0,$ then $R(t)=R_2t^2/2$ and the polynomial sets $\{P_n\}_{n\geq 0}$ are the rescaled ultraspherical, Hermite and Chebychev polynomials of the first kind.
\end{theorem}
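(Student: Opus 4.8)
The plan is to extract from the generating relation \eqref{gf00} two structural identities—one differential-difference identity valid for \emph{every} series of the form $F(xt-R(t))$, and the recursion \eqref{gf6}, which is the extra hypothesis—and to play them against one another until $R$ is determined. Writing $G(x,t)=F(xt-R(t))$ and differentiating, $\partial_xG=tF'(xt-R)$ and $\partial_tG=(x-R'(t))F'(xt-R)$; eliminating $F'$ gives $t\,\partial_tG=(x-R'(t))\,\partial_xG$. Reading off the coefficient of $t^n$ (and using $R_1=0$, so $R'(t)=\sum_{m\ge1}R_{m+1}t^m$) yields
\begin{equation}
nP_n(x)=xP_n'(x)-\sum_{m=1}^{n}R_{m+1}\,\frac{\alpha_{n-m}}{\alpha_n}\,P_{n-m}'(x),\qquad n\ge0.
\end{equation}
Expanding $(xt-R(t))^k$ shows each $P_n$ is monic of degree $n$ and, since $R=O(t^2)$, that the coefficient of $x^{n-1}$ in $P_n$ vanishes; feeding this into \eqref{gf6} forces $\beta_n=0$ for all $n$. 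The recursion thus reduces to $xP_n=P_{n+1}+\omega_nP_{n-1}$, and an immediate induction shows each $P_n$ carries the parity of $n$.

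The parity of $P_n$ disposes of the odd coefficients of $R$. Since every monomial in $P_n$ has degree $\equiv n\ (\mathrm{mod}\ 2)$, the coefficient of $x^{n-m}$ must vanish whenever $m$ is odd. Computing that coefficient from $(xt-R(t))^k$, the single-factor term ($j=1$) contributes a nonzero multiple of $R_m$, while every product $R^{\,j}$ with $j\ge2$ reaching order $t^m$ necessarily contains a factor with a smaller odd index; proceeding by induction on the odd index, those products vanish and one is left with $R_3=R_5=\cdots=0$.

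The even coefficients $R_4,R_6,\dots$ are compatible with parity, so eliminating them is the crux and is where \eqref{gf6} must genuinely be used. Differentiating the recursion and substituting the identity above produces, for each $n$, a polynomial identity of parity $n$ and degree $\le n-2$,
\begin{equation}
P_{n+1}'(x)-(n+1)P_n(x)=\Big(R_2\frac{\alpha_{n-1}}{\alpha_n}-\omega_n\Big)P_{n-1}'(x)+\sum_{j\ge2}R_{2j}\,\frac{\alpha_{n-2j+1}}{\alpha_n}\,P_{n-2j+1}'(x).
\end{equation}
Expanding the left-hand side in the linearly independent family $\{P_{n-1}',P_{n-3}',\dots\}$, the coefficient of $P_{n-1}'$ fixes $\omega_n$, while the coefficient of $P_{n-2j+1}'$ equates $R_{2j}\,\alpha_{n-2j+1}/\alpha_n$ to a quantity already determined by $R_2$ and the lower, already-vanished coefficients. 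Under the hypothesis $R_4=\cdots=R_{2j-2}=0$ this collapses to a single linear equation in $R_{2j}$, and the main obstacle is to check that its coefficient is nonzero, so that $R_{2j}=0$. Carrying the induction through gives $R(t)=R_2t^2/2$ in case (b); in case (a), where $R_2=0$, the same scheme starts with no nonzero seed and forces every $R_m=0$, whence $R\equiv0$ and $P_n(x)=x^n$.

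Finally, with $R(t)=R_2t^2/2$ the identity of the first display collapses to $xP_n'=nP_n+R_2\frac{\alpha_{n-1}}{\alpha_n}P_{n-1}'$; combining this with $xP_n=P_{n+1}+\omega_nP_{n-1}$ and comparing coefficients in the basis $\{P_{n-2},P_{n-3}',\dots\}$ produces, for $n\ge4$, a second-order linear recurrence for the ratios $e_n:=\alpha_n/(R_2\alpha_{n-1})$, namely $ne_n-2(n-1)e_{n-1}+(n-2)e_{n-2}=0$. Its solution space is spanned by $e_n=1$ and $e_n=1/n$, so $e_n=A+B/n$ eventually, with the lowest ratio $e_1$ remaining free; according to the values of $(A,B)$ and this residual freedom the sequence $\{\alpha_n\}$ is that of an exponential $F(u)=e^{cu}$ (Hermite), a binomial power $F(u)=(1-cu)^{-\lambda}$ (ultraspherical), or the degenerate logarithmic $F(u)=1-c_1\log(1-c_2u)$ (Chebyshev of the first kind), each up to the rescaling anticipated in the statement.
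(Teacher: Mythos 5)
Your scaffolding is sound and, up to the crux, parallels the paper's machinery: your differential-difference identity is exactly Proposition~\ref{prop1} (equation \eqref{gf1}); your observation that the coefficient of $x^{n-1}$ in $P_n$ vanishes, forcing $\beta_n=0$ and hence automatic symmetry $P_n(-x)=(-1)^nP_n(x)$, is a genuinely nice shortcut --- the paper obtains the vanishing of the odd $R_m$ from a symmetry \emph{hypothesis} (Lemma~\ref{Lem1}), whereas Theorem~\ref{thmm0} assumes none, so deriving symmetry and then killing $R_3=R_5=\cdots=0$ by parity is legitimate and economical. Your case (a) collapses correctly (it is the paper's Corollary~\ref{cor1} with $d=1$, via $P_n(0)$), and your closing recurrence $ne_n-2(n-1)e_{n-1}+(n-2)e_{n-2}=0$ is precisely equation \eqref{gf1102} with $d=1$ and $T_2=0$, i.e.\ $b_n-2b_{n-1}+b_{n-2}=0$ as in Example~1, with the same three-family classification.

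The genuine gap is the sentence ``this collapses to a single linear equation in $R_{2j}$, and the main obstacle is to check that its coefficient is nonzero.'' It does not collapse, and this is where essentially the entire published proof lives. When you equate coefficients of $P'_{n-2j+1}$, the left-hand side is \emph{not} ``already determined by $R_2$'': it depends on the unknown sequence $\{\alpha_n\}$ through the ratios $a_n=\alpha_n/\alpha_{n+1}$. What you actually obtain, for each $n$, is the $d=1$ instance of \eqref{gfd12}--\eqref{gfd13}; for $j=2$ it reads
\begin{equation*}
R_4\Bigl(a_n-\frac{n-3}{n-2}\,a_{n-3}\Bigr)+R_2\Bigl(\frac{n+2}{n}\,c_n^1-\frac{n-2}{n-1}\,c_{n-2}^1\Bigr)-\frac{2}{n}\,c_n^1c_{n-1}^1=\frac{R_2^2}{n},
\qquad c_n^1=\frac{R_2}{2}\Bigl(n\frac{a_n}{a_{n-1}}-(n-1)\Bigr),
\end{equation*}
one equation per $n$ in the infinitely many unknowns $a_n$ together with $R_4$: linear in $R_4$, but with an unknown (possibly vanishing) coefficient, and quadratic (Riccati-type) in the $a$'s. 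You cannot first pin down the $a_n$ --- your final recurrence for $e_n$ is derived \emph{assuming} $R_4=0$ --- so the argument is circular at exactly this point. Ruling out $R_4\neq0$ (resp.\ $T_2\neq0$ in the $d$-symmetric generalization) against all admissible $\{\alpha_n\}$ is what requires the singularity-counting rationality arguments (Corollaries~\ref{cor2}--\ref{cor5}), the case analysis on $D_k=T_k^2-T_{k+1}T_{k-1}$, the geometric case $T_k=ab^k$ with digamma-sum asymptotics, and the final contradiction $\lambda_1=\lambda_2=0$ in Section~3 (and the analogous analysis in the cited proof of Theorem~\ref{thmm0}). A secondary, fixable omission: in case (b) only $\alpha_1R_2\neq0$ is assumed, yet you divide by $\alpha_n$ throughout; you should first note that comparing leading coefficients in \eqref{gf1} at index $n+1$ shows $\alpha_{n+1}=0$ forces $R_2\alpha_n=0$, so $\alpha_1\neq0$ and $R_2\neq0$ propagate to $\alpha_n\neq0$ for all $n$.
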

 
Note that, the polynomials in Theorem 1 which satisfy a three term recursion with complex coefficients are not necessary orthogonal with respect to a moment functional $\mathcal{L}$, i.e. for all non negative integers $m,n$; $\mathcal{L}(P_m(x)P_n(x))=0$ if $m\neq n$ and $\mathcal{L}(P^2_n(x))\neq 0$, see Definition 2.2 in \cite{Chihara}.

\begin{remark}$\text{ }$\\
a) A polynomial set PS, $\{P_n\}_{n\geq 0}$, is such that $degree(P_n)=n$,\; $n\geq 0$.\\
b) A PS is called a monic PS if $P_n(x)=x^n+\cdots$, for $\; n\geq 0$.\\
c) The choice $\alpha_0=1$ and $R_1=0$ comes from the fact that the generating function $\gamma_1+\gamma_2F((x+R_1)t-R(t))=\gamma_1+\gamma_2\sum_{n\geq 0}\alpha_nP_n(x+R_1)t^n$, with $\gamma_1$ and $\gamma_2$ constants, is also of type \eqref{gf00}.
\end{remark}

In the present paper, we are interested in monic PSs generated by \eqref{gf00} (with $F(t)$ and $R(t)$ as in Theorem 1) and satisfying higher order recurrence relations \eqref{dorth0}. For this purpose, we adopt the following definitions:
\begin{definition}\label{def1}
Let $d\in \mathbb{N}$. A PS $\{Q_n\}_{n\geq 0}$ is called a $d$-polynomial set $d$-PS if its corresponding monic PS $\{P_n\}_{n\geq 0}$, defined by  $P_n(x)=(\lim_{x\rightarrow+\infty}x^{-n}Q_n(x))^{-1}Q_n(x)$, $n\geq 0$, satisfies the $(d+1)$-order recurrence relation:

\begin{eqnarray}\label{dorth0}
\left\{
\begin{array}{l}
xP_n(x)=P_{n+1}(x)+\sum_{l=0}^{d}\gamma_{n}^{l}P_{n-l}(x),\quad n\geq
0,\\
P_0(x)=1,\;\;P_{-l}(x)=0,\;\;1\leq l\leq d
\end{array}
\right.
\end{eqnarray}
where
\begin{equation}\label{dorth01}
\{\gamma_{n}^{l}\}_{n\geq 0}, 0\leq l\leq d, \text{are complex sequences}
\end{equation}
and
\begin{equation}\label{dorth02}
\{\gamma_{n}^{d}\}_{n\geq d} \;\text{is not the null sequence, for}\;\; d\geq 1.
\end{equation}
\end{definition}
\begin{definition}\label{def2}
Let $\{P_n\}_{n\geq 0}$ be a $d$-PS. If the PS of the derivatives $\{(n+1)^{-1}P_{n+1}^{'}\}_{n\geq 0}$ is also a $d$-PS, then $\{P_n\}_{n\geq 0}$ is called a classical $d$-PS.
\end{definition}
\begin{definition}\label{def3}\cite{MaroniDouak}
Let $\omega=\exp(2i\pi/(d+1))$, where $i^2=-1$.
The PS $\{P_n\}_{n\geq 0}$ is called $d$-symmetric if it fulfils:
\begin{equation}\label{dorth03}
P_n(\omega x)=\omega^nP_n(x),\;n\geq 0.
\end{equation}
\end{definition}
\begin{remark}\text{In Definition~\ref{def1}:}\\
a) For $d\geq 1$, the first terms $\{\gamma_{n}^{l}\}_{0\leq n<l\leq d}$ of the sequences $\{\gamma_{n}^{l}\}_{n\geq 0}$, $1\leq l\leq d$, are arbitrary.\\
b) For $d=0$, \eqref{dorth0} becomes $xP_n(x)=P_{n+1}(x)+\gamma_{n}^{0}P_{n}(x), n\geq 0,$ with $P_0(x)=1$. Here $\{\gamma_{n}^{0}\}_{n\geq 0}$ can be the null sequence, so the set of monomials is a $0$-PS.
\end{remark}
An interesting class of $d$-PSs characterized by \eqref{dorth0}, with the additional condition $\gamma_{n}^{d}\neq 0$ for $n\geq d$, are the $d$-orthogonal polynomial sets $d$-OPSs \cite{Maroni,Iseghem}. In this context, the authors in \cite{Bencheikh} generalized the result stated in \cite{alsalam,Bachhaus} by showing the following:
\begin{theorem}\cite{Bencheikh}\label{thmm1}
The only $d$-OPSs generated by $G((d+1)xt-t^{d+1})$ are the classical d-symmetric polynomials.
\end{theorem}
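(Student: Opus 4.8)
The plan is to imitate the generating-function/PDE method behind Theorem~\ref{thmm0}. Set
\[
\Phi(x,t)=G\!\left((d+1)xt-t^{d+1}\right)=\sum_{n\geq0}a_nP_n(x)t^n,
\]
where $\{P_n\}_{n\geq0}$ is the associated monic PS and $a_n\neq0$ are the coefficients of the monic expansion. The proof rests on two ingredients that I would extract from this single identity: an automatic $d$-symmetry, and a lowering (structure) relation that transfers the recursion to the derivatives. Together these will show that the PS is simultaneously $d$-symmetric and classical, which is the content of the statement.

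First I would settle the $d$-symmetry for free. Since $\omega^{d+1}=1$, the argument is invariant under $(x,t)\mapsto(\omega x,\omega^{-1}t)$, because $(d+1)(\omega x)(\omega^{-1}t)-(\omega^{-1}t)^{d+1}=(d+1)xt-t^{d+1}$. Comparing the coefficient of $t^n$ in $\Phi(\omega x,\omega^{-1}t)=\Phi(x,t)$ yields $P_n(\omega x)=\omega^nP_n(x)$, so $\{P_n\}$ is $d$-symmetric in the sense of Definition~\ref{def3}. Substituting $P_n(\omega x)=\omega^nP_n(x)$ into \eqref{dorth0} forces $\gamma_n^l(\omega^{-l}-\omega)=0$, hence $\gamma_n^l=0$ for $0\leq l\leq d-1$, and the recursion collapses to
\[
xP_n(x)=P_{n+1}(x)+\gamma_n^dP_{n-d}(x).
\]

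Next I would produce the lowering relation. Differentiating $\Phi=G((d+1)xt-t^{d+1})$ in $x$ and in $t$ and eliminating $G'$ gives the first-order PDE $t\,\partial_t\Phi=(x-t^d)\,\partial_x\Phi$; reading off the coefficient of $t^n$ yields
\[
n\,a_nP_n(x)=a_n\,xP_n'(x)-a_{n-d}P_{n-d}'(x),
\]
while differentiating the collapsed recursion gives $P_{n+1}'=P_n+xP_n'-\gamma_n^dP_{n-d}'$. The heart of the argument is to combine these two identities to eliminate $xP_n'$ and then the undifferentiated polynomials: writing $\widetilde P_m=(m+1)^{-1}P_{m+1}'$ for the monic derivative PS and substituting back, I expect to reach a closed relation
\[
x\widetilde P_m(x)=\widetilde P_{m+1}(x)+\widetilde\gamma_m^{\,d}\,\widetilde P_{m-d}(x),
\]
with $\widetilde\gamma_m^{\,d}$ an explicit expression in $a_{m+1},a_{m+1-d}$ and $\gamma_{m+1}^d$. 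This exhibits $\{\widetilde P_m\}$ as a ($d$-symmetric) $d$-PS, whence $\{P_n\}$ is classical by Definition~\ref{def2}, and the theorem follows.

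The main obstacle is twofold. The elimination leading to the displayed derivative recursion is routine algebra, but one must verify that $\{\widetilde\gamma_m^{\,d}\}_{m\geq d}$ is not the null sequence, as demanded by \eqref{dorth02}; here the $d$-orthogonality hypothesis $\gamma_n^d\neq0$ together with $a_n\neq0$ is exactly what rules out identical vanishing, and I would check this by showing $\widetilde\gamma_m^{\,d}$ cannot be zero for all $m$. To pin down the family precisely — that one obtains \emph{exactly} the classical $d$-symmetric polynomials and no other admissible $G$ — I would solve the coupled recursions for $\{a_n\}$ and $\{\gamma_n^d\}$ that arise from comparing successive coefficients in the two structure relations; this is the delicate step, as it requires controlling the low-index terms and the normalization, and it forces $G$ to be a generalized hypergeometric series, matching the known classification of classical $d$-symmetric $d$-OPSs.
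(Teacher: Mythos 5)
Note first that the paper does not prove this statement itself --- it is quoted from \cite{Bencheikh} --- so the fair comparison is with the specialization of the paper's own machinery (Sections 2--4) to $R(t)=t^{d+1}/(d+1)$, obtained by writing $G((d+1)xt-t^{d+1})=F(xt-t^{d+1}/(d+1))$ with $F(u)=G((d+1)u)$. Your skeleton retraces exactly that machinery, and its first two ingredients are sound: the invariance under $(x,t)\mapsto(\omega x,\omega^{-1}t)$ does give $P_n(\omega x)=\omega^nP_n(x)$ for free (here the implication runs in the opposite direction from Lemma~\ref{Lem1}, since $R$ is already a monomial, and the collapse $\gamma_n^l=0$ for $l<d$ is the standard consequence recorded after \eqref{dsym}, from \cite{MaroniDouak}); your PDE identity $na_nP_n=a_nxP_n'-a_{n-d}P_{n-d}'$ is precisely \eqref{gf1} with all $R_{k+1}$ null except $R_{d+1}$; and the derivative recursion you ``expect to reach'' is exactly \eqref{gf3} rewritten as \eqref{clas1} with coefficients \eqref{clas2}, so that elimination is indeed routine. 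You also correctly avoid the paper's heavy asymptotic analysis (Cases 1--3 of Section 3), which is only needed when $R$ is unknown.

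The genuine gaps are in your last paragraph. First, the non-nullity of $\{\widetilde\gamma_m^{\,d}\}$ does \emph{not} follow from ``$\gamma_n^d\neq0$ together with $a_n\neq0$'' alone: if $\widetilde\gamma_m^{\,d}\equiv0$, then \eqref{clas2} only says $\gamma_{m+1}^d=-R_{d+1}\alpha_{m+1-d}/((m+1)\alpha_{m+1})$, which is perfectly compatible with $\gamma_n^d$ never vanishing and $a_n\neq 0$. Killing this case requires the second-order compatibility condition extracted from \eqref{xgf5}, namely the Riccati-type relation \eqref{gfd123}: a null sequence $c_n^d$ (or $\tilde c_n^d$, which satisfies the same relation since the derivative PS is generated by $F'(xt-R(t))$) forces $R_{d+1}^2=0$, contradicting $R_{d+1}=1$. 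This is exactly how Corollary~\ref{cor02} argues, and it cannot be reached from your two displayed identities alone; moreover, for the $d$-\emph{orthogonal} reading of ``classical'' you need $\widetilde\gamma_m^{\,d}\neq0$ for \emph{every} $m\geq d$, not merely a non-null sequence, which is an additional constraint on the parameters of $G$. Second, the identification of the family --- that one gets exactly the classical $d$-symmetric polynomials with $G$ of hypergeometric type --- is where most of the content of \cite{Bencheikh} lies, and you only announce it: concretely one must substitute \eqref{gf111a} into \eqref{gfd123} to obtain $b_n-2b_{n-d}+b_{n-2d}=0$, solve it as $b_{md+r}=\beta_rm+b_r$ (Corollary~\ref{cor06}), and resum $\alpha_n=\prod_{l=0}^{n-1}b_l/n!$ into the ${}_pF_q$ expressions of Proposition~\ref{propF}; the converse inclusion (that every classical $d$-symmetric $d$-OPS admits such a generating function) is also left untouched. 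As written, your argument establishes that the generated $d$-PS is $d$-symmetric and classical in the weak sense of Definition~\ref{def2}, with the decisive classification steps deferred.
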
  
Another contribution concerns $d$-OPSs with generating functions of Sheffer type, i.e. of the form $A(t)\exp(xH(t))$. We have
\begin{theorem}\cite{Varma}\label{thmm2}
Let $\rho_d(t)=\sum_{k=0}^{d}\tilde{\rho}_kt^k$ be a polynomial of degree d $(\tilde{\rho}_d\neq 0)$ and $\sigma_{d+1}(t)=\sum_{k=0}^{d+1}\tilde{\sigma}_kt^k$ be a polynomial of degree less than or equal to $d + 1$.
The only PSs, which are $d$-orthogonal and also Sheffer PS, are generated by
\begin{equation}\label{dorth002}
\exp\left(\int_{0}^{t}\frac{\rho_d(s)}{\sigma_{d+1}(s)}ds\right)\exp\left(x\int_{0}^{t}\frac{1}{\sigma_{d+1}(s)}ds\right)=\sum_{k=0}^{\infty}P_n(x)\frac{t^n}{n!}
\end{equation}
with the conditions
\begin{equation}\label{dorth0002}
\tilde{\sigma}_0(n\tilde{\sigma}_0-\tilde{\rho}_d)\neq 0,\;\; n\geq 1.
\end{equation}
\end{theorem}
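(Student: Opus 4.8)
The plan is to convert both hypotheses---that $\{P_n\}$ is a Sheffer set and that it is $d$-orthogonal---into a single functional equation for the bivariate series $G(x,t)=\sum_{n\ge 0}P_n(x)t^n/n!$, and then to read off the required polynomial structure of $H$ and $A$ by comparing $t$-coefficients in the basis $\{P_j\}_{j\ge 0}$. Writing $G=A(t)e^{xH(t)}$ with $A(0)=1$, $H(0)=0$ and $h_1:=H'(0)\ne 0$ (so that $\deg P_n=n$, the leading coefficient being $h_1^n$), the Sheffer property is encoded by the identity $\partial_t G=\bigl(A'(t)/A(t)+xH'(t)\bigr)G$. On the other hand, by Definition~\ref{def1} and the characterization of $d$-OPSs recalled in the text, the $d$-orthogonality of $\{P_n\}$ means its monic version obeys \eqref{dorth0} with $\gamma_n^d\ne 0$ for $n\ge d$; rescaling to the (non-monic) $P_n$ this reads $xP_n=\sigma_0 P_{n+1}+\sum_{l=0}^{d}\tilde\gamma_n^{\,l}P_{n-l}$ with $\sigma_0:=1/h_1$, which in generating-function form becomes $xG=\sigma_0\,\partial_tG+\tilde C(x,t)$, where $\tilde C=\sum_{n}\bigl(\sum_{l=0}^d\tilde\gamma_n^l P_{n-l}\bigr)t^n/n!$.

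The heart of the argument is to eliminate the explicit factor $x$. Solving the Sheffer identity for $xG$ gives $xG=\sigma(t)\bigl(\partial_tG-(A'/A)G\bigr)$ with $\sigma:=1/H'$, and equating this with the recurrence form yields the $x$-free functional equation $\tilde C=(\sigma-\sigma_0)\partial_tG-\rho\,G$, where $\rho:=\sigma\,(A'/A)=(A'/A)/H'$. A priori $\sigma$ and $\rho$ are only formal power series in $t$; the goal becomes to prove that $\sigma$ is a polynomial of degree $\le d+1$ and $\rho$ a polynomial of degree exactly $d$, since then $H(t)=\int_0^t ds/\sigma(s)$ and $A(t)=\exp\!\int_0^t(\rho/\sigma)\,ds$ are precisely of the form displayed in \eqref{dorth002} with $\sigma=\sigma_{d+1}$ and $\rho=\rho_d$.

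I would then expand the functional equation in powers of $t$ and match coefficients. Using $\partial_tG=\sum_mP_{m+1}t^m/m!$ and $G=\sum_mP_mt^m/m!$, the coefficient of $t^n/n!$ on the right is $\sum_{k\ge1}\binom{n}{k}k!\,\sigma_kP_{n+1-k}-\sum_{k\ge0}\binom{n}{k}k!\,\mu_kP_{n-k}$ (with $\sigma_k,\mu_k$ the Taylor coefficients of $\sigma,\rho$), while the left side $\sum_{l=0}^d\tilde\gamma_n^lP_{n-l}$ lies in $\mathrm{span}\{P_{n-d},\dots,P_n\}$. Since the $P_j$ are linearly independent I can compare the coefficient of each $P_j$ separately. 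Collecting the coefficient of $P_{n-l'}$ for a fixed depth $l'\ge d+1$, where the left side contributes nothing, gives the relation $(n-l')\,\sigma_{l'+1}=\mu_{l'}$. The decisive observation---and the step I expect to be the main obstacle to state cleanly---is that this must hold for all sufficiently large $n$: the right-hand side is independent of $n$ while the left-hand side is affine in $n$, so necessarily $\sigma_{l'+1}=0$ and $\mu_{l'}=0$ for every $l'\ge d+1$. Hence $\sigma_k=0$ for $k\ge d+2$ and $\mu_k=0$ for $k\ge d+1$, i.e. $\sigma$ has degree $\le d+1$ and $\rho$ has degree $\le d$.

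It then remains to pin down the degree of $\rho$ and the non-degeneracy conditions. Matching instead the coefficient of $P_{n-d}$ gives $\tilde\gamma_n^d=\tfrac{n!}{(n-d)!}\bigl[(n-d)\sigma_{d+1}-\mu_d\bigr]$, and specializing to $n=d$ yields $\tilde\gamma_d^d=-d!\,\mu_d$; since $d$-orthogonality forces $\gamma_d^d\ne 0$, this gives $\mu_d=\tilde\rho_d\ne 0$, so $\deg\rho=d$ exactly. Reversing the construction now recovers the generating function \eqref{dorth002}, and the standing requirements---that $H$ be a genuine delta series ($\tilde\sigma_0=1/h_1\ne 0$, equivalently $\deg P_n=n$) and that the recurrence stay regular ($\gamma_n^d\ne 0$ for all $n\ge d$)---are exactly what the non-degeneracy conditions \eqref{dorth0002} record. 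The converse, that \eqref{dorth002} under \eqref{dorth0002} does produce a $d$-orthogonal Sheffer set, follows by reading the very same coefficient identities in the opposite direction.
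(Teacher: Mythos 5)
A preliminary remark on the comparison you asked for: the paper offers no proof of Theorem~\ref{thmm2} at all---it is imported verbatim from \cite{Varma} as background material---so your proposal can only be judged on its own merits. On those merits, the core of your argument is correct and is the natural route: eliminating $x$ between the Sheffer identity $\partial_tG=(A'/A+xH')G$ and the recurrence to obtain $\tilde C=(\sigma-\sigma_0)\partial_tG-\rho\,G$ with $\sigma=1/H'$ and $\rho=(A'/A)/H'$ is legitimate (both are formal power series since $H'(0)\neq0$), the coefficient extraction is right, and matching the coefficient of $P_{n-l'}$ at depth $l'\ge d+1$ does yield $(n-l')\sigma_{l'+1}=\mu_{l'}$ for all admissible $n$, whence $\sigma_{l'+1}=\mu_{l'}=0$; your formula $\tilde\gamma_n^d=\frac{n!}{(n-d)!}\bigl[(n-d)\sigma_{d+1}-\mu_d\bigr]$ and the deduction $\mu_d\neq0$ from the case $n=d$ also check out.

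The genuine gap is your closing claim that the delta-series and regularity requirements ``are exactly what the non-degeneracy conditions \eqref{dorth0002} record.'' That identification does not follow from anything you proved, and with the indexing printed in the statement it is in fact false. By your own formula, regularity $\gamma_n^d\neq0$ for all $n\ge d$ is equivalent to $\tilde\rho_d\neq0$ (the case $n=d$, already hypothesized) together with $n\tilde\sigma_{d+1}-\tilde\rho_d\neq0$ for $n\ge1$: it is the \emph{leading} coefficient $\tilde\sigma_{d+1}$ of $\sigma_{d+1}$ that enters, whereas \eqref{dorth0002} has the constant term $\tilde\sigma_0$ in the second factor (only the first factor, $\tilde\sigma_0\neq0$, correctly encodes the delta-series requirement $H'(0)=1/\tilde\sigma_0$). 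The two conditions are inequivalent: for $d=1$ take $\sigma_2(t)=3+t+2t^2$ and $\rho_1(t)=2t$; then $\tilde\sigma_0(n\tilde\sigma_0-\tilde\rho_1)=3(3n-2)\neq0$ for all $n\ge1$, so \eqref{dorth0002} holds, yet $\gamma_2^1\propto(2-1)\tilde\sigma_2-\tilde\rho_1=2-2=0$ and the generated Sheffer set fails $1$-orthogonality. (A sanity check that your derived condition is the correct one: for Laguerre-type data $\sigma_2(t)=(1-t)^2$, $\rho_1(t)=(\alpha+1)(1-t)$ it gives precisely $\alpha\notin\{-1,-2,\dots\}$, matching $\omega_n=n(n+\alpha)$.) So either the statement as quoted carries a misprint or a convention shift relative to \cite{Varma}, or your reading of its coefficients is misaligned; in either case the asserted equivalence is the one step you have not established, and you should either reconcile the conventions explicitly or flag the discrepancy. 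A smaller point: the converse direction is dispatched in one sentence; it does follow by reversing your coefficient identities, but the regularity of the resulting recurrence again rests on the same formula for $\gamma_n^d$, i.e.\ on the corrected condition rather than on \eqref{dorth0002} as printed.
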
  
Note that Theorem 3 characterizes also the $d$-OPSs with generating functions of the form $F(xH(t)-R(t))$ with $F(t)=\exp(t)$, since $A(t)\exp(xH(t))=F(xH(t)-R(t))$ where $R(t)=-\ln(A(t))$. For $H(t)=t$ we meet the Appell case with the Hermite  $d$-OPSs \cite{Douak96} generated by $\exp(xt-\bar{\rho}_{d+1}(t))$ where $\bar{\rho}_{d+1}(t)$ is a polynomial of degree $d+1$.

As a consequence of the results obtained in this paper, we give some generalizations of Theorems 1, 2 and the Appell case in Theorem 3 (see also \cite{Douak96}) to $d$-PS generated by \eqref{gf00}.

After this short  introduction, we give in section 2 some results for $d$-PSs generated by \eqref{gf00}. Then in section 3 we show that the only $d$-symmetric $d$-PSs generated by \eqref{gf00} are the classical d-symmetric polynomials. For this later case, we give in section 4 the $(d+1)$-order recurrence relation \eqref{dorth0} and the expression of $F(t)$ by means of hypergeometric functions.

\section{Some general results}

The results in this section concern all $d$-PSs generated by \eqref{gf00}. The central result is Proposition 2 below from which the other results arise. First we have

\begin{proposition}\label{prop1}\cite{meskzahaf}

Let  $\{P_n\}_{n\geq 0}$ be a PS generated by \eqref{gf00}.
Then we have

\begin{equation}\label{gf1} 
\alpha_n xP'_n(x)-\sum_{k=1}^{n-1}R_{k+1}\alpha_{n-k}P'_{n-k}(x)=n\alpha_nP_n(x), \;\; n\geq 1.
\end{equation}

\end{proposition}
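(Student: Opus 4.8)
The plan is to differentiate the generating function relation with respect to the spatial and temporal variables and compare coefficients. Writing $y = xt - R(t)$, I would start from \eqref{gf00}, namely $F(y) = \sum_{n\geq 0}\alpha_n P_n(x) t^n$. The key observation is that $y$ depends on both $x$ and $t$ in a structured way: $\partial y/\partial x = t$ while $\partial y/\partial t = x - R'(t)$. The idea is to form the two partial derivatives of $F(y)$ and eliminate the common factor $F'(y)$ between them, producing a single identity relating $x$-derivatives and $t$-derivatives of the $P_n$.

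\medskip

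\noindent\emph{First step.} Differentiating \eqref{gf00} with respect to $x$ gives $t\,F'(y) = \sum_{n\geq 0}\alpha_n P'_n(x) t^n$, so that $F'(y) = \sum_{n\geq 0}\alpha_n P'_n(x) t^{n-1} = \sum_{n\geq 1}\alpha_n P'_n(x) t^{n-1}$ (noting $P'_0 = 0$). Differentiating \eqref{gf00} with respect to $t$ gives $(x - R'(t))F'(y) = \sum_{n\geq 1} n\,\alpha_n P_n(x) t^{n-1}$.

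\medskip

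\noindent\emph{Second step.} Multiplying the $x$-derivative identity by $(x - R'(t))$ and equating it with the $t$-derivative identity eliminates $F'(y)$:
\[
(x - R'(t))\sum_{n\geq 1}\alpha_n P'_n(x) t^{n-1} = \sum_{n\geq 1} n\,\alpha_n P_n(x) t^{n-1}.
\]
Since $R(t) = \sum_{n\geq 1} R_n t^n/n$ with $R_1 = 0$, one has $R'(t) = \sum_{n\geq 1} R_n t^{n-1} = \sum_{n\geq 2} R_n t^{n-1}$. Expanding the left-hand side, the term $x\,\alpha_n P'_n(x) t^{n-1}$ contributes directly, while the product $R'(t)\sum \alpha_n P'_n(x) t^{n-1}$ becomes a Cauchy product in $t$.

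\medskip

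\noindent\emph{Third step (comparison of coefficients).} The final move is to extract the coefficient of $t^{n-1}$ on both sides for each $n\geq 1$. The $x\,\alpha_n P'_n(x)$ term gives the first term on the left of \eqref{gf1}; the Cauchy product involving $R'(t)$ gives $\sum_{k}R_{k+1}\alpha_{n-k}P'_{n-k}(x)$ (with the index shift arising because $R'(t)$ starts at $t^1$, i.e.\ its coefficient of $t^k$ is $R_{k+1}$ for $k\geq 1$); and the right-hand side gives $n\,\alpha_n P_n(x)$. Collecting these yields exactly \eqref{gf1}. I expect the only delicate point to be bookkeeping the index shifts correctly — making sure the Cauchy product range is $k$ from $1$ to $n-1$ and that the coefficient of $t^k$ in $R'(t)$ is $R_{k+1}$ — but this is routine once the two derivative identities are in hand, and there is no genuine analytic obstacle since everything is a formal power series manipulation.
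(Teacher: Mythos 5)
Your proof is correct and follows essentially the same route as the proof of this proposition in the cited source \cite{meskzahaf}: eliminate $F'(xt-R(t))$ between the $x$- and $t$-derivatives of \eqref{gf00} and compare coefficients of $t^{n-1}$, using $R_1=0$ so that the coefficient of $t^k$ in $R'(t)$ is $R_{k+1}$ for $k\geq 1$. Your index bookkeeping (the Cauchy product running over $1\leq k\leq n-1$, and the division by $t$ justified by $P'_0=0$) is exactly right, so there is nothing to fix.
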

Secondly 
\begin{proposition}\label{prop2}
Let  $\{P_n\}$ be a $d$-PS generated by \eqref{gf00} and satisfying \eqref{dorth0}, \eqref{dorth01} and \eqref{dorth02}, with $\alpha_n\neq 0$ for $n\geq 1$. Putting  $$a_n=\frac{\alpha_n}{\alpha_{n+1}}, \;(n\geq
0)\text{ and } c_{n}^{l}=\frac{\alpha_n}{\alpha_{n-l}}\,\gamma_{n}^{l},\;\; (1\leq l\leq d,\;\;n\geq l),$$ 

then we have:

a) 

\begin{equation}\label{gf9}
\gamma_{n}^{0}=0,\;\;\text{  for } n\geq 0.
\end{equation}

b)
\begin{equation}\label{gf10}
\gamma_{n}^{1} =\frac{R_2}{2}(na_n-(n-1)a_{n-1}),\;\;\text{ for } n\geq 1.
\end{equation}
or equivalently
\begin{equation}\label{cgf10}
c_{n}^{1} =\frac{R_2}{2}\left(n\frac{a_n}{a_{n-1}}-(n-1)\right),\;\;\text{ for } n\geq 1.
\end{equation} 

c)
\begin{equation}\label{gf11}
c_{n}^{2} =\frac{R_3}{3}\left((n-1)\frac{a_n}{a_{n-2}}-(n-2)\right),\;\;\text{ for } n\geq 2.
\end{equation}

d.i)

\begin{eqnarray}\label{gfd11}
\frac{k+1}{n-k+1}a_{n-k}c_n^k &=&R_{k+1}\left(a_n-\frac{n-k}{n-k+1}a_{n-k}\right)
+\sum_{l=1}^{k-2}R_{k-l}\left(\frac{n+2}{n-l+1} \,c_n^l-\frac{n-k+l+1}{n-k+l+2}\,c_{n-k+l+1}^{l}\right)\nonumber\\
&&-\sum_{l=1}^{k-2}\frac{l+1}{n-l+1}c_{n}^{l}c_{n-l}^{k-l-1}-\sum_{l=1}^{k-2}\frac{R_{l+1}R_{k-l}}{n-l+1},\;\;\; 3\leq k\leq d,\;\;n\geq k.
\end{eqnarray}

d.ii)
\begin{eqnarray}\label{gfd12}
&& R_{k+1}\left(a_n-\frac{n-k}{n-k+1}a_{n-k}\right)
+\sum_{l=1}^{d}R_{k-l}\left(\frac{n+2}{n-l+1} \,c_n^l-\frac{n-k+l+1}{n-k+l+2}\,c_{n-k+l+1}^{l}\right)\nonumber\\
&&-\sum_{l=k-1-d}^{d}\frac{l+1}{n-l+1}c_{n}^{l}c_{n-l}^{k-l-1}=\sum_{l=1}^{k-2}\frac{R_{l+1}R_{k-l}}{n-l+1},\;\;\; d+1\leq k\leq 2d+1,\;\;n\geq k.
\end{eqnarray}

d.iii)
\begin{eqnarray}\label{gfd13}
&& R_{k+1}\left(a_n-\frac{n-k}{n-k+1}a_{n-k}\right)
+\sum_{l=1}^{d}R_{k-l}\left(\frac{n+2}{n-l+1} \,c_n^l-\frac{n-k+l+1}{n-k+l+2}\,c_{n-k+l+1}^{l}\right)\nonumber\\
&&=\sum_{l=1}^{k-2}\frac{R_{l+1}R_{k-l}}{n-l+1},\;\;\; k\geq 2d+2,\;\;n\geq k.
\end{eqnarray}

\end{proposition}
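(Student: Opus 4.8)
The plan is to extract everything from two ingredients that are already available: the differential–recurrence identity \eqref{gf1} of Proposition~\ref{prop1}, which encodes that the generating function has the special form $F(xt-R(t))$, and the assumed $(d+1)$-order recurrence \eqref{dorth0}. First I would differentiate \eqref{dorth0}, obtaining $P_n+xP_n'=P_{n+1}'+\sum_{l=0}^{d}\gamma_n^{l}P_{n-l}'$, and use \eqref{gf1} to eliminate $xP_n'$; this produces a single master identity expressing $(n+1)P_n$ as a finite combination of the derivatives $P_{n+1}',P_n',\dots$ together with the tail $\sum_{k}R_{k+1}\tfrac{\alpha_{n-k}}{\alpha_n}P_{n-k}'$. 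Writing $p_n^{(j)}$ for the coefficient of $x^{n-j}$ in the monic polynomial $P_n$, I would then compare the coefficients of $x^{n+1-i}$ on the two sides for each $i\ge 0$ (equivalently, expand the derivatives back in the basis $\{P_m\}$ and match the coefficient of $P_{n-i}$). The case $i=0$ is automatic, and for $i\ge1$ one gets a scalar identity relating $R_{k+1}$, the recurrence coefficients (through $c_n^{l}$ and $a_n$), and the subleading data $p_m^{(j)}$, the latter being themselves pinned down by \eqref{gf1}.

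To see the mechanism and dispatch parts (a)–(c), I would read off the lowest coefficients directly. Since $R_1=0$, the series $R(t)$ starts at order $t^2$, so $p_n^{(1)}=0$ for all $n$; feeding this into the $x^n$-coefficient of \eqref{dorth0} gives $p_n^{(1)}=p_{n+1}^{(1)}+\gamma_n^0$, hence $\gamma_n^0=0$, which is \eqref{gf9}. The next coefficient is $p_n^{(2)}=-\tfrac{R_2}{2}(n-1)a_{n-1}$, and the $x^{n-1}$-coefficient of \eqref{dorth0} reads $p_n^{(2)}=p_{n+1}^{(2)}+\gamma_n^1$, so $\gamma_n^1=p_n^{(2)}-p_{n+1}^{(2)}$ gives \eqref{gf10}; rescaling by $\alpha_n/\alpha_{n-1}$ yields the equivalent \eqref{cgf10}. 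The same step one level deeper, where still only the first power of $R$ contributes, produces \eqref{gf11}. These are exactly the instances where a genuinely new coefficient ($\gamma_n^0,\gamma_n^1,\gamma_n^2$) appears linearly and can be solved for.

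For the general statement I would organise the comparison by the depth $k$ (the index $i$ above, shifted by one). When $3\le k\le d$ the matched identity still contains the fresh coefficient $c_n^{k}$ linearly, multiplied by a rational factor produced by the differentiation weights; solving for it gives \eqref{gfd11}. Here the two quadratic families enter: the products $c_n^{l}c_{n-l}^{k-l-1}$ come from composing two recurrence steps (the basis-expansion coefficients of a derivative carry recurrence data, which then multiply a second factor $\gamma$), while the products $R_{l+1}R_{k-l}$ come from the convolution square of $R(t)$ hidden in \eqref{gf1}, the denominators $n-l+1$ being generated by the differentiation weights together with the normalisation $R(t)=\sum_nR_nt^n/n$. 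Once $k>d$ no new coefficient is available and the identity becomes a pure constraint, giving \eqref{gfd12}–\eqref{gfd13}. The split between these two ranges is dictated solely by the product $\sum_{l}\tfrac{l+1}{n-l+1}c_n^{l}c_{n-l}^{k-l-1}$, whose index range is $k-1-d\le l\le d$: this is nonempty exactly when $k\le 2d+1$ (yielding \eqref{gfd12}) and empty when $k\ge 2d+2$ (yielding \eqref{gfd13}).

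The routine part is the generating-function extraction of the coefficients $p_n^{(j)}$ and of the derivative expansions; the delicate part, and the main obstacle, is the quadratic bookkeeping: carrying the two product families with their correct shifted indices $c_{n-l}^{k-l-1}$ and $R_{k-l}$, tracking the rational prefactors $\tfrac{n+2}{n-l+1}$, $\tfrac{n-k}{n-k+1}$, and the like, and respecting the boundary conventions $P_{-l}=0$ so that the sums truncate correctly near $n=k$. Arranging the summation limits so that the $c\,c$ product term disappears precisely at $k=2d+2$ is what separates \eqref{gfd12} from \eqref{gfd13}, and is the step that most demands care.
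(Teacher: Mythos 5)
Your treatment of parts a)--c) is correct, and in fact slightly more elementary than the paper's route: extracting $p_n^{(1)}=0$, $\alpha_np_n^{(2)}=-\tfrac{R_2}{2}(n-1)\alpha_{n-1}$ and the analogous $p_n^{(3)}$ directly from $F(xt-R(t))$ and matching powers of $x$ in \eqref{dorth0} does yield \eqref{gf9}, \eqref{gf10} and \eqref{gf11}, because at depths $j\le 3$ only the part of $P_n$ linear in $R$ contributes ($R_1=0$ forces $R(t)^2=O(t^4)$). But that last observation is exactly why your continuation to d.i)--d.iii) has a genuine gap: from depth $j=4$ onward, $p_n^{(j)}$ contains convolutions $R_{k_1}\cdots R_{k_i}$ of every order $i\le\lfloor j/2\rfloor$, so matching the coefficient of $x^{n+1-k}$ in your ``master identity'' no longer produces a relation that is quadratic in $(R,c)$ with explicit rational prefactors; you would have to show that all cubic and higher $R$-products cancel, and that cancellation is precisely the bookkeeping you yourself flag as ``the main obstacle'' and never perform. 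The variant you offer --- expanding the derivatives back in the basis $\{P_m\}$ and matching the coefficient of $P_{n-i}$ --- has the same defect: the connection coefficients of $P'_m$ in $\{P_j\}$ are not explicit a priori, so there is nothing concrete to match against.

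The missing device is the paper's double substitution. The paper multiplies \eqref{gf4} by $x$, replaces $xP_n$ on the left via \eqref{dorth0} and then each resulting $P_m$ via \eqref{gf4}, and each $xP'_m$ on the right via \eqref{gf3}; the outcome \eqref{xgf5} is a single identity written entirely in the graded basis $\{P'_m\}$, whose coefficients are explicit and automatically at most quadratic in $(\gamma,R)$. In particular the products $c_n^{l}c_{n-l}^{k-l-1}$ and $R_{l+1}R_{k-l}$ are created by this second substitution --- not, as you write, by ``the convolution square of $R(t)$ hidden in \eqref{gf1}'': equation \eqref{gf1} is linear in the $R_j$. Equating coefficients of $P'_{n+1-k}$ in \eqref{xgf5} then gives the one formula \eqref{dgf12}, of which d.i)--d.iii) are the specializations $3\le k\le d$, $d+1\le k\le 2d+1$ and $k\ge 2d+2$; your remark that the range $k-1-d\le l\le d$ of the $c\,c$-sum empties exactly at $k=2d+2$ correctly explains the last split, but it describes the shape of the answer rather than deriving it. As it stands, the hard half of the proposition is asserted, not proved.
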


\begin{proof}
By differentiating \eqref{dorth0} we get
\begin{equation}\label{gf2}
xP'_n(x)+P_n(x)=P'_{n+1}(x)+\sum_{l=0}^{d}\gamma_{n}^{l}P_{n-l}'(x).
\end{equation}
Then by making the operations $n\alpha_n
Eq\eqref{gf2}+Eq\eqref{gf1}$ and $Eq\eqref{gf1}-\alpha_n
Eq\eqref{gf2}$ we obtain, respectively,

\begin{eqnarray}\label{gf3}
(n+1)\alpha_nxP'_n(x)=n\alpha_n\left(P'_{n+1}(x)+\sum_{l=0}^{d}\gamma_{n}^{l}P_{n-l}'(x)\right)+\sum_{k=1}^{n-1}R_{k+1}\alpha_{n-k}P'
_{n-k}(x)
\end{eqnarray}
and
\begin{eqnarray}\label{gf4}
(n+1)\alpha_n
P_n(x)=\alpha_n\left(P'_{n+1}(x)+\sum_{l=0}^{d}\gamma_{n}^{l}P_{n-l}'(x)\right)-\sum_{k=1}^{n-1}R_{k+1}\alpha_{n-k}P'
_{n-k}(x).
\end{eqnarray}
Inserting  \eqref{gf1} in the left-hand side of the equation $Eq\eqref{gf4}$ multiplied by $x$  we obtain
\begin{equation}\label{xgf4}
(n+1)\alpha_n\left(P_{n+1}+\sum_{l=0}^{d}\gamma_{n}^{l}P_{n-l}(x)\right)=\alpha_n\left(xP'_{n+1}(x)+\sum_{l=0}^{d}\gamma_{n}^{l}xP_{n-l}'(x)\right)-\sum_{k=1}^{n-1}R_{k+1}\alpha_{n-k}xP_{n-k}'(x).
\end{equation} 
Using \eqref{gf4} and \eqref{gf3} respectively  in the left hand side and right hand side of \eqref{xgf4} we get

\begin{eqnarray}
&&\frac{n+1}{n+2}\alpha_n\left(P_{n+2}'(x)+\sum_{l=0}^{d}\gamma_{n+1}^{l}P_{n+1-l}'(x)\right)-\frac{n+1}{n+2}\frac{\alpha_n}{\alpha_{n+1}}\sum_{k=1}^{n}R_{k+1}\alpha_{n+1-k}P_{n+1-k}'(x)\nonumber\\
&&+(n+1)\alpha_n\sum_{l=0}^{d}\gamma_{n}^{l}\frac{1}{n+1-l}P_{n+1-l}'(x)
+(n+1)\alpha_n\sum_{l=0}^{d}\gamma_{n}^{l}\frac{1}{n+1-l}\sum_{l'=0}^{d}\gamma_{n-l}^{l'}P_{n-l-l'}'(x)\nonumber\\
&&-(n+1)\alpha_n\sum_{l=0}^{d}\gamma_{n}^{l}\frac{1}{n+1-l}\frac{1}{\alpha_{n-l}}\sum_{k=1}^{n-l-1}R_{k+1}\alpha_{n-l-k}P_{n-k-l}'(x)=
\frac{n+1}{n+2}\alpha_n\left(P_{n+2}'(x)+\sum_{l=0}^{d}\gamma_{n+1}^{l}P_{n+1-l}'(x)\right)\nonumber\\
&&+\frac{1}{n+2}\frac{\alpha_n}{\alpha_{n+1}}\sum_{k=1}^{n}R_{k+1}\alpha_{n+1-k}P_{n+1-k}'(x)+\alpha_n\sum_{l=0}^{d}\gamma_{n}^{l}\frac{n-l}{n+1-l}P_{n+1-l}'(x)\nonumber\\
&&+\alpha_n\sum_{l=0}^{d}\gamma_{n}^{l}\frac{n-l}{n+1-l}\sum_{l'=0}^{d}\gamma_{n-l}^{l'}P_{n-l-l'}'(x)+\alpha_n\sum_{l=0}^{d}\gamma_{n}^{l}\frac{1}{n+1-l}\frac{1}{\alpha_{n-l}}\sum_{k=1}^{n-l-1}R_{k+1}\alpha_{n-l-k}P_{n-l-k}'(x)\nonumber\\
&&-\sum_{k=1}^{n-1}R_{k+1}\frac{n-k}{n+1-k}\alpha_{n-k}P_{n+1-k}'(x)-\sum_{k=1}^{n-1}R_{k+1}\frac{n-k}{n+1-k}\alpha_{n-k}\sum_{l=0}^{d}\gamma_{n-k}^{l}P_{n-k-l}'(x)\nonumber\\
&&-\sum_{k=1}^{n-1}R_{k+1}\frac{1}{n+1-k}\sum_{k'=1}^{n-k-1}R_{k'+1}\alpha_{n-k-k'}P_{n-k-k'}'(x).
\end{eqnarray}
It follows that
\begin{eqnarray}\label{xgf5}
&&-\frac{\alpha_n}{\alpha_{n+1}}\sum_{k=1}^{n}R_{k+1}\alpha_{n+1-k}P_{n+1-k}'(x)+\alpha_n\sum_{l=0}^{d}\gamma_{n}^{l}\frac{l+1}{n+1-l}P_{n+1-l}'(x)+\alpha_n\sum_{l=0}^{d}\gamma_{n}^{l}\frac{l+1}{n+1-l}\sum_{l'=0}^{d}\gamma_{n-l}^{l'}P_{n-l-l'}'(x)\nonumber\\
&&-(n+2)\alpha_n\sum_{l=0}^{d}\gamma_{n}^{l}\frac{1}{n+1-l}\frac{1}{\alpha_{n-l}}\sum_{k=1}^{n-l-1}R_{k+1}\alpha_{n-l-k}P_{n-k-l}'(x)=-\sum_{k=1}^{n-1}R_{k+1}\frac{n-k}{n+1-k}\alpha_{n-k}P_{n+1-k}'(x)\nonumber\\
&&-\sum_{k=1}^{n-1}R_{k+1}\frac{n-k}{n+1-k}\alpha_{n-k}\sum_{l=0}^{d}\gamma_{n-k}^{l}P_{n-k-l}'(x)
-\sum_{k=1}^{n-1}R_{k+1}\frac{1}{n+1-k}\sum_{k'=1}^{n-k-1}R_{k'+1}\alpha_{n-k-k'}P_{n-k-k'}'(x)
\end{eqnarray}

a) By comparing the coefficients of $P_{n+1}'(x)$ in the both sides of \eqref{xgf5} we obtain 
$$\frac{1}{n+1}\alpha_n\gamma_{n}^{0}=0,\;\; \text{ for } n\geq0,$$
and then $$\gamma_{n}^{0}=0,\;\; \text{ for } n\geq0.$$

b) Equating the coefficients of $P'_{n}(x)$ in the both sides of the
 equation \eqref{xgf5} gives
 $$\frac{2}{n}\alpha_n\gamma_n^1=R_2\frac{\alpha_n}{\alpha_{n+1}}\alpha_n-R_2\frac{n-1}{n}\alpha_{n-1},\quad \text{ for } n\geq 1,$$
which can be written as
$$\gamma _n^1=na_n-(n-1)a_{n-1},\;\;\text{ for } n\geq 1.$$

Now by equating the coefficients of $P'_{n+1-k}(x)$ for $k\geq 2$  in the both sides of the equation \eqref{xgf5} we obtain

\begin{eqnarray}\label{dgf12}
&&\frac{k+1}{n+1-k}\alpha_n\gamma_{n}^{k}+R_{k+1}\alpha_{n-k}\frac{n-k}{n+1-k}-R_{k+1}\frac{\alpha_n}{\alpha_{n+1}}\alpha_{n+1-k}+\alpha_n\sum_{l=1}^{d}\gamma_{n}^{l}\frac{l+1}{n+1-l}\sum_{l'=1}^{d}\gamma_{n-l}^{l'}\delta_{l+l'}^{k-1}\nonumber\\
&&-(n+2)\alpha_n\sum_{l=1}^{d}\gamma_{n}^{l}\frac{1}{n+1-l}\sum_{k'=1}^{n-l-1}R_{k'+1}\frac{\alpha_{n-l-k'}}{\alpha_{n-l}}\delta_{l+k'}^{k-1}+\sum_{k'=1}^{n}R_{k'+1}\alpha_{n-k'}\frac{n-k'}{n+1-k'}\sum_{l=1}^{d}\gamma_{n-k'}^{l}\delta_{k'+l}^{k-1}\nonumber\\
&&+\sum_{k''=1}^{n}R_{k''+1}\frac{1}{n+1-k''}\sum_{k'=1}^{n-k''}R_{k'+1}\alpha_{n-k'-k''}\delta_{k'+k''}^{k-1}=0,\;\;\text{for }  n\geq k.
\end{eqnarray}
 
then by taking $k=2$ in \eqref{dgf12} we retrieve c) and by considering $3\leq k\leq d$, $d+1\leq k\leq 2d+1$ and $k\geq 2d+2$ we obtain d.i.), d.ii.) and d.iii.) respectively.
 
\end{proof}

In the following corollaries we adopt the same conditions and notations as in Proposition \ref{prop2}.
\begin{corollary}\label{cor1}
If $R_2=R_3=\cdots=R_{d+1}=0$ and $\alpha_n\neq 0$ for $n\geq 1$, then $R(t)=0$, $F(t)$ is arbitrary and $F(xt)=\sum_{n\geq 0}\alpha_nx^nt^n$
generates the monomials $\{x^n\}_{n\geq 0}$.
\end{corollary}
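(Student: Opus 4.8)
The plan is to lean on Proposition~\ref{prop2}, whose relations (a)--(d) are tailored to exactly this hypothesis. The first and main task is to show that the vanishing of $R_2,\dots,R_{d+1}$ forces every recurrence coefficient to vanish, i.e. $c_n^l=0$ (equivalently $\gamma_n^l=0$, since $\alpha_n\ne0$) for $0\le l\le d$ and $n\ge l$. The cases $l=0,1$ are immediate from \eqref{gf9} and from \eqref{gf10} with $R_2=0$; and when $d\ge2$, relation \eqref{gf11} with $R_3=0$ gives $c_n^2=0$ as well.

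For the remaining range $3\le l\le d$ (nonempty only when $d\ge3$) I would induct on $k$ in \eqref{gfd11}. The key point is that, under the hypothesis, every term on the right-hand side of \eqref{gfd11} vanishes: the coefficient $R_{k+1}$ has index $k+1\le d+1$, each $R_{k-l}$ occurring in the sums has index $2\le k-l\le d-1$, and every product $R_{l+1}R_{k-l}$ involves indices in the same band, so all $R$-terms are zero; meanwhile each product $c_n^lc_{n-l}^{k-l-1}$ carries superscripts $l$ and $k-l-1$ that both lie strictly between $0$ and $k$, hence vanish by the inductive hypothesis. The left-hand side $\tfrac{k+1}{n-k+1}a_{n-k}c_n^k$ is therefore zero, and since $a_{n-k}=\alpha_{n-k}/\alpha_{n-k+1}\ne0$ we obtain $c_n^k=0$, closing the induction. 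Thus $\gamma_n^l=0$ for all $0\le l\le d$ and $n\ge l$, while for $n<l$ the term $\gamma_n^lP_{n-l}$ is already absent because $P_{n-l}=0$; hence \eqref{dorth0} collapses to $xP_n=P_{n+1}$, giving $P_n(x)=x^n$. (Note that for $d\ge1$ this in particular yields $\gamma_n^d\equiv0$, so the only polynomials satisfying the hypotheses are these monomials, which form a $0$-PS.)

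It remains to deduce $R(t)=0$. Since all $c_n^l=0$, relations \eqref{gfd12} and \eqref{gfd13} reduce, for every $k\ge d+1$, to $R_{k+1}\bigl(a_n-\tfrac{n-k}{n-k+1}a_{n-k}\bigr)=\sum_{l=1}^{k-2}\tfrac{R_{l+1}R_{k-l}}{n-l+1}$. I would prove $R_{k+1}=0$ for $k\ge d+1$ by a second induction on $k$: assuming $R_2=\dots=R_k=0$, every product $R_{l+1}R_{k-l}$ on the right has both indices in $\{2,\dots,k-1\}$ and so vanishes, leaving the right-hand side zero for all $n\ge k$; evaluating at the boundary $n=k$ makes the bracket collapse to $a_k\ne0$, which forces $R_{k+1}=0$. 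Combined with the hypothesis and $R_1=0$, this gives $R_n=0$ for every $n$, i.e. $R(t)=0$. (Equivalently, once $P_n(x)=x^n$ one may set $x=0$ in \eqref{gf00} to get $F(-R(t))=\alpha_0=1$, and writing $\sum_{n\ge1}\alpha_n(-R(t))^n=(-R(t))\bigl(\alpha_1-\alpha_2R(t)+\cdots\bigr)=0$ with the second factor a unit, since $\alpha_1\ne0$, again forces $R(t)=0$.) Then $F$ is arbitrary and $F(xt)=\sum_{n\ge0}\alpha_nx^nt^n$ generates the monomials.

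The step I expect to be the main obstacle is the index bookkeeping in the first induction: one must verify, for every $k$ with $3\le k\le d$, that all $R$-indices appearing in \eqref{gfd11} land in the vanishing band $\{2,\dots,d+1\}$ and that all $c$-superscripts are genuinely smaller than $k$, so that the induction really closes. Once that is secured, the passage to $R(t)=0$ is routine, the only trick being the evaluation at $n=k$ (or, alternatively, the substitution $x=0$).
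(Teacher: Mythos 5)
Your proof is correct, but it follows a genuinely different route from the paper's. You derive everything from the coefficient identities of Proposition~\ref{prop2} via a double induction: first killing all recurrence coefficients ($c_n^l=0$ for $1\le l\le d$ via \eqref{gf9}, \eqref{cgf10}, \eqref{gf11} and then \eqref{gfd11}, whose $R$-indices all land in the band $\{2,\dots,d+1\}$ and whose $c$-products have superscripts strictly below $k$, exactly as you checked), and then killing the remaining coefficients of $R$ by evaluating the reduced \eqref{gfd12}/\eqref{gfd13} at $n=k$, where the bracket collapses to $a_k\neq 0$. The paper instead bypasses Proposition~\ref{prop2} entirely: it evaluates the identity \eqref{gf1} of Proposition~\ref{prop1} at $x=0$, which under the hypothesis gives $P_n(x)=x^n$ for $n\le d+1$ and $P_n(0)=-R_n\alpha_1/(n\alpha_n)$ at the first index where $R$ could survive, and compares with the recurrence \eqref{dorth0} at $x=0$ (which forces $P_n(0)=0$) to conclude $R_n=0$ by a single induction on $n$; the vanishing of the $\gamma_n^l$ is never needed and the index bookkeeping you flagged as the main obstacle simply does not arise. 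What each approach buys: the paper's argument is shorter and more elementary, while yours produces the explicit vanishing of all $\gamma_n^l$ as a by-product, making transparent your parenthetical observation that for $d\ge 1$ the hypotheses are incompatible with \eqref{dorth02}, so the resulting set is really a $0$-PS --- a point the paper leaves implicit. Note also that your second induction is essentially the same $n=k$ evaluation trick the paper uses to prove Corollary~\ref{cor2}, and your alternative closing argument (setting $x=0$ in \eqref{gf00} to get $F(-R(t))=1$ and factoring out the unit $\alpha_1-\alpha_2R(t)+\cdots$ in the formal power series ring) is sound and close in spirit to the paper's use of the values $P_n(0)$.
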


\begin{proof}
As $R_1=R_2=\cdots=R_{d+1}=0$, it is enough to show by induction that $R_n=0$ for $n\geq d+2$.
For $n=1,2,...,d+2$, the equation \eqref{gf1} gives $P_1(x)=x$, $P_2(x)=x^2,...,P_{d+1}(x)=x^{d+1}$ and $P_{d+2}(0)=\frac{-R_{d+2}\alpha_1}{(d+2)\alpha_{d+2}}$. But according to equation \eqref{dorth0}, for $n=d+1$,  $P_{d+2}(0)=0$ and then $R_{d+2}=0$.

 Now assume that $R_k=0$ for $d+2\leq k\leq n-1$. According to \eqref{gf1} we have, for $d+2\leq k\leq n-1$, $P_k(0)=0$ and $P_n(0)=-\frac{R_n\alpha_1}{n\alpha_n}$.
On other hand, by the shift $n\rightarrow n-1$ in \eqref{dorth0} we have $P_n(0)=0$ and thus $R_n=0$. As $R(t)=0$, the generating function \eqref{gf00} reduces to  $F(xt)=\sum_{n\geq 0}\alpha_nx^nt^n$ which generates the monomials with $F(t)$ arbitrary.
\end{proof}

\begin{corollary}\label{cor2}
If $R_{d+2}=R_{d+3}=\cdots=R_{2d+2}=0$ then $R(t)=R_2t^2/2+R_3t^3/3+\cdots+R_{d+1}t^{d+1}/(d+1)$. 
  \end{corollary}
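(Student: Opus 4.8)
The goal is to upgrade the hypothesis $R_{d+2}=\cdots=R_{2d+2}=0$ into the statement that \emph{all} coefficients $R_n$ with $n\geq d+2$ vanish; since $R_1=0$ by assumption, this immediately yields $R(t)=\sum_{n=2}^{d+1}R_nt^n/n$. The plan is to prove $R_n=0$ for $n\geq d+2$ by strong induction, treating the given hypothesis as the base data: assume $R_j=0$ for all $d+2\leq j\leq k$ (which for $k=2d+2$ is exactly the hypothesis) and deduce $R_{k+1}=0$ for each $k\geq 2d+2$. The only tool needed is relation \eqref{gfd13} of Proposition \ref{prop2}, which is valid precisely in the range $k\geq 2d+2$, $n\geq k$.

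The key idea is to evaluate \eqref{gfd13} at the endpoint $n=k$. There the factor multiplying $R_{k+1}$ degenerates: $a_n-\frac{n-k}{n-k+1}a_{n-k}$ becomes $a_k-\frac{0}{1}a_0=a_k$. Since $\alpha_m\neq 0$ for all $m\geq 0$, we have $a_k=\alpha_k/\alpha_{k+1}\neq 0$, so this leading coefficient survives and is nonzero. It therefore remains to check that \emph{every other} term in \eqref{gfd13} vanishes at $n=k$ under the induction hypothesis, which then forces $R_{k+1}a_k=0$ and hence $R_{k+1}=0$.

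The main bookkeeping is to verify these vanishings through the index ranges. In the middle sum $\sum_{l=1}^{d}R_{k-l}(\cdots)$ the indices $k-l$ run over $\{k-d,\dots,k-1\}$; since $k\geq 2d+2$ gives $k-d\geq d+2$, all these indices lie in $\{d+2,\dots,k-1\}$, so $R_{k-l}=0$ by the induction hypothesis and the whole sum dies regardless of the (well-defined) values of the $c$-coefficients it multiplies. On the right-hand side $\sum_{l=1}^{k-2}R_{l+1}R_{k-l}/(n-l+1)$, each product $R_{l+1}R_{k-l}$ pairs indices summing to $k+1$; for both factors to escape the vanishing set $\{1\}\cup\{d+2,\dots,k\}$ one would need simultaneously $l+1\leq d+1$ and $k-l\leq d+1$, i.e. $k-d-1\leq l\leq d$, which is impossible once $k\geq 2d+2$. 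Hence every product contains a zero factor and the right-hand side vanishes as well. I expect this index analysis to be the only delicate point; everything else is the direct substitution $n=k$.

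Collecting the induction, $R_n=0$ for all $n\geq d+2$, and since $R_1=0$ the series $R(t)=\sum_{n\geq 1}R_nt^n/n$ truncates to $R_2t^2/2+R_3t^3/3+\cdots+R_{d+1}t^{d+1}/(d+1)$, as claimed. As a sanity check, this mirrors the mechanism behind Corollary \ref{cor1}, but here the nontrivial low-order coefficients $R_2,\dots,R_{d+1}$ are allowed to remain, and the evaluation at $n=k$ is exactly what isolates $R_{k+1}$ cleanly.
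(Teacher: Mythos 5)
Your proof is correct and follows essentially the same route as the paper: an induction on $k$ using relation \eqref{gfd13} evaluated at the endpoint $n=k$, where the coefficient of $R_{k+1}$ reduces to $a_k\neq 0$ while all other terms vanish, giving $a_kR_{k+1}=0$. The only difference is that you spell out the index bookkeeping (the ranges $k-l\in\{k-d,\dots,k-1\}$ and the impossibility of $k-d-1\leq l\leq d$ for $k\geq 2d+2$) that the paper leaves implicit.
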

\begin{proof}
We will use \eqref{gfd13} and proceed by induction on
$k$ to show that $R_k=0$ for $k\geq 2d+3$. Indeed $k=2d+2$ and $n=2d+2$
in \eqref{gfd13} leads to $a_{2d+2}R_{2d+3}=0$ and since $a_n\neq 0$  we get $R_{2d+3}=0$.
Suppose that $R_{2d+3}=R_{2d+4}=\cdots=R_{k}=0$, then for $n=k$ the
equation \eqref{gfd13} gives $a_{k}R_{k+1}=0$ and finally $R_{k+1}=0$.
\end{proof}

\begin{corollary}\label{cor03}
If $R_{\kappa+d+1}=\cdots=R_{\kappa+1}=R_{\kappa}=R_{\kappa-1}=\cdots=R_{\kappa-d}=0$ for some $\kappa\geq 3d+3$, then
$R_{d+2}=R_{d+3}=\cdots=R_{2d+2}=0$.
\end{corollary}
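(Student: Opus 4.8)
The plan is to derive the hypothesis of Corollary~\ref{cor2} and then quote it. I would argue by contradiction: assuming $R_{b}\neq 0$ for some index $b$ with $d+2\le b\le 2d+2$, I fix one such $b$ and then \emph{march the given block of $2d+2$ consecutive zeros downward}, one index at a time, until it reaches $b$ itself, at which point $R_b$ will be forced to vanish. So the target is the downward induction statement ``$R_m=0$ and the zero-block is $[m,\kappa+d+1]$'', run for $m=\kappa-d-1,\kappa-d-2,\dots,b$, starting from the hypothesis which gives the block $[\kappa-d,\kappa+d+1]$.

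The engine is relation \eqref{gfd13} (the case $k\ge 2d+2$), which stays available because $\kappa\ge 3d+3$ keeps every index I use in that regime. The decisive structural point is that the left-hand side of \eqref{gfd13} is a linear combination of $R_{k+1},R_{k-1},\dots,R_{k-d}$ whose coefficients carry the unknown sequences $a_n$ and $c_n^{l}$; consequently, as soon as those $d+1$ coefficients all lie in a block already known to vanish, the left-hand side is identically zero in $n$, and the intractable $a_n,c_n^{l}$ drop out entirely. At the step that kills $R_m$ I would invoke \eqref{gfd13} with $k=m+b-1$: because $b\ge d+2$ and $m\le\kappa-d-1$, the linear indices $\{k+1\}\cup\{k-d,\dots,k-1\}$ all sit inside the current zero-block $[m+1,\kappa+d+1]$ (while $m$ and $b$ themselves do not, precisely because $b\ge d+2$), so the left-hand side vanishes identically.

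The main obstacle is the nonlinear convolution term $\sum_{l=1}^{k-2}R_{l+1}R_{k-l}/(n-l+1)$ on the right of \eqref{gfd13}. Unlike in Corollary~\ref{cor2}, it does \emph{not} vanish for positional reasons, since it couples the coefficient I am chasing with the low-order coefficients $R_2,\dots,R_{d+1}$, which are legitimately nonzero in the final answer. The way around this is to exploit that \eqref{gfd13} holds for \emph{every} integer $n\ge k$: once the left-hand side is identically zero, the right-hand side is a rational function of $n$ vanishing at infinitely many points, hence the zero function; its partial-fraction pieces $1/(n-l+1)$ have pairwise distinct poles $n=l-1$, so each residue $R_{l+1}R_{k-l}$ must vanish. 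In particular the residue attached to the pair $(m,b)$ gives $R_mR_b=0$, and since $R_b\neq 0$ this yields $R_m=0$, extending the zero-block down by one.

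Iterating down to $m=b$, the pair $(b,b)$ now produces $R_b^{2}=0$, contradicting $R_b\neq 0$; hence $R_{d+2}=\cdots=R_{2d+2}=0$, and Corollary~\ref{cor2} then identifies $R(t)$ with a polynomial of degree at most $d+1$. The remaining work is purely bookkeeping: verifying at each step that $k=m+b-1\ge 2d+2$ and that the indices $\{k+1\}\cup\{k-d,\dots,k-1\}$ really fall in $[m+1,\kappa+d+1]$ (both follow from $d+2\le b\le 2d+2$, $b\le m\le\kappa-d-1$ and $\kappa\ge 3d+3$), and confirming that the term $R_mR_b$ actually occurs in the sum, i.e.\ $l=m-1\in\{1,\dots,k-2\}$. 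I expect the only genuinely delicate point to be the one already isolated above, namely legitimizing the passage from ``the sum vanishes for all $n\ge k$'' to ``each product vanishes'' via the distinct-pole (linear independence) argument; everything else is routine index-chasing under the standing hypotheses of Proposition~\ref{prop2}.
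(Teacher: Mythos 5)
Your proposal is correct and follows essentially the same route as the paper: annihilate the left-hand side of \eqref{gfd13} by choosing $k$ so that the indices $k+1,k-1,\dots,k-d$ fall inside the current zero-block, conclude that the convolution $\sum_{l=1}^{k-2}R_{l+1}R_{k-l}/(n-l+1)$ is the zero rational function and extract each residue $R_{l+1}R_{k-l}=0$ at the distinct poles, then march the block downward by contradiction until the assumed nonzero $R_b$ ($d+2\le b\le 2d+2$) self-annihilates. Your only deviation is cosmetic: the uniform choice $k=m+b-1$ kills one index per step, whereas the paper treats $b=d+2$ first and then, for $b=d+2+r$, uses the batch $k=\kappa+r,\dots,\kappa$ to clear $r+1$ indices per round --- the mechanism and all index constraints (in particular $\kappa\ge 3d+3$ guaranteeing $k\ge 2d+2$ throughout) are identical.
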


\begin{proof}

$\bullet$ Let $k=\kappa$ in \eqref{gfd13}, then for $n\geq \kappa$ the
fraction $\sum_{l=1}^{\kappa-2}\frac{R_{l+1}R_{\kappa-l}}{n-l+1}$, as function of integer $n$, is
null even for real $n$. So, 
\begin{equation}
\lim_{x\to l-1}(x-l+1)\sum_{s=1}^{\kappa-2}\frac{R_{s+1}R_{\kappa-s}}{x-s+1}=R_{l+1}R_{\kappa-l}=0,\;\text{for}\; 1\leq l \leq \kappa-2
\end{equation}
which is $R_{d+2}R_{\kappa-d-1}=0$ when $l=d+1$. Supposing  $R_{d+2}\neq 0$ leads to
$R_{\kappa-d-1}=0$. So $R_{\kappa+d}=R_{\kappa+d-1}=\cdots=R_{\kappa-d}=R_{\kappa-d-1}=0$ and with the
same procedure we find $R_{\kappa-d-2}=0$. Going so on till we arrive at $R_{d+2}=0$ which contradicts $R_{d+2}\neq 0$.

$\bullet$  By taking successively $k=\kappa +r, \kappa +r-1,...,\kappa$  in  \eqref{gfd13}, for $1\leq r\leq d$, we find
\begin{eqnarray*}
&&R_{l+1}R_{\kappa+r-l}=0 \text { for } 1\leq l \leq \kappa+r-2,\\
&&R_{l+1}R_{\kappa+r-1-l}=0 \text{ for }  1\leq l \leq \kappa+r-3,\\
&&\qquad\qquad\vdots\\
&&R_{l+1}R_{\kappa-l}=0 \text{ for } 1\leq l \leq \kappa-2.
\end{eqnarray*}
If $R_{d+2+r}\neq 0$ then by taking $l=d+1+r$ we get $R_{\kappa-d-1}=R_{\kappa-d-2}=\cdots=R_{\kappa-d-r-1}=0$. So $R_{\kappa+d-r}=R_{\kappa+d-r-1}=\cdots=R_{\kappa-d-r-1}=0$ and with the
same procedure we find $R_{\kappa-d-r-2}=R_{\kappa-d-r-3}=\cdots=R_{\kappa-d-2r-2}=0$.  Going so on till we arrive at $R_{d+2+r}=0$ which contradicts $R_{d+2+r}\neq 0$. 
\end{proof}

\begin{corollary}\label{cor5}
 If $a_n$ is a rational function of $n$ then $R_{d+2}=R_{d+3}=\cdots=R_{2d+2}=0$.
\end{corollary}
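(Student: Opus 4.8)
The plan is to exploit the fact that rationality of $a_n$ propagates to all the auxiliary sequences $c_n^l$, so that for each fixed $k$ the whole of \eqref{gfd13} becomes an identity between rational functions of $n$; a pole-counting argument in the spirit of Corollary \ref{cor03} then forces the high-index coefficients of $R$ to vanish. (This also covers the hypergeometric case, since for a generalized hypergeometric $F$ the ratio $a_n=\alpha_n/\alpha_{n+1}$ is automatically rational in $n$.)

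First I would prove, by induction on the superscript $l$, that each $c_n^l$ with $1\le l\le d$ is a rational function of $n$. The cases $l=1$ and $l=2$ are immediate from \eqref{cgf10} and \eqref{gf11}, which express $c_n^1,c_n^2$ as rational combinations of $a_n,a_{n-1},a_{n-2}$. For the inductive step I solve \eqref{gfd11} (valid for $3\le k\le d$) for $c_n^k$:
\begin{equation*}
c_n^k=\frac{n-k+1}{(k+1)\,a_{n-k}}\Big[\ \text{terms in } a_n,\,a_{n-k},\,c_n^l,\,c_{n-l}^{k-l-1},\,c_{n-k+l+1}^{l}\ (l<k),\ R_j\ \Big].
\end{equation*}
Every quantity on the right carries a superscript strictly smaller than $k$, hence is rational by the induction hypothesis, and dividing by the (not identically zero) rational function $a_{n-k}$ keeps $c_n^k$ rational, with poles confined to a fixed finite set depending only on $d$ and on $a_n$.

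Consequently both sides of \eqref{gfd13} are rational in $n$ for each fixed $k\ge 2d+2$, and since they agree for every integer $n\ge k$ they coincide identically as rational functions. I would then match poles. The right-hand side $\sum_{l=1}^{k-2}R_{l+1}R_{k-l}/(n-l+1)$ has a simple pole at each integer $n=l-1$ ($1\le l\le k-2$) with residue $R_{l+1}R_{k-l}$. The left-hand side, being assembled from $a_n,\ a_{n-k},\ c_n^l,\ c_{n-k+l+1}^{l}$ and the explicit factors $\tfrac{n-k}{n-k+1},\ \tfrac{n+2}{n-l+1},\ \tfrac{n-k+l+1}{n-k+l+2}$, can only have poles either in a fixed bounded set $F\subseteq[-M_0,M_0]$ (the poles of $a_n$ and of the $c_n^l$, together with $n=0,\dots,d-1$), independent of $k$, or within a fixed distance of $k$, say in $[k-N_0,k+N_0]$ (the poles of $a_{n-k}$ and $c_{n-k+l+1}^{l}$, together with $n=k-1$ and $n=k-d-2,\dots,k-3$), where $M_0,N_0$ depend only on $d$ and $a_n$. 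Hence for every $l$ with $M_0+2\le l\le k-N_0$ the point $n=l-1$ is not a pole of the left-hand side, so the residue on the right vanishes: $R_{l+1}R_{k-l}=0$.

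This holds for all large $k$. Choosing $k$ odd and $l=(k-1)/2$ (which lies in the admissible window once $k$ is large) gives $R_{(k+1)/2}^{2}=0$, so $R_m=0$ for every sufficiently large $m$; that is, $R$ is a polynomial. Picking $\kappa\ge 3d+3$ large enough that $R_{\kappa-d}=\cdots=R_{\kappa+d+1}=0$ and invoking Corollary \ref{cor03} then yields $R_{d+2}=\cdots=R_{2d+2}=0$, as claimed. The step I expect to be the main obstacle is the pole bookkeeping of the third paragraph: one must argue \emph{uniformly in $k$} that the poles of $a_n$ and of each $c_n^l$ stay in a fixed finite set while the $k$-shifted contributions stay within bounded distance of $k$, so that the two pole clusters remain separated and leave a growing interval of intermediate integers free; once that separation is in place, the residue vanishing and the diagonal choice $l=(k-1)/2$ are routine.
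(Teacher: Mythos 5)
Your proposal is correct and follows essentially the same route as the paper: rationality of $a_n$ propagates to the $c_n^l$ via \eqref{cgf10}, \eqref{gf11} and \eqref{gfd11}, equation \eqref{gfd13} then becomes an identity of rational functions of $n$, a singularity analysis forces the residues $R_{l+1}R_{k-l}$ to vanish for all $l$ away from the pole clusters, and Corollary~\ref{cor03} finishes the argument. Your two variations --- localizing the poles into a fixed set plus a cluster near $k$ (where the paper only counts singularities via the $N_s$ properties), and the diagonal choice $l=(k-1)/2$ giving $R_{(k+1)/2}^{2}=0$ (where the paper instead invokes a nonzero $R_{k_0}$ from Corollary~\ref{cor1} to produce a run of vanishing coefficients) --- are harmless refinements that, if anything, make the paper's bookkeeping more explicit.
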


\begin{proof} 
From \eqref{cgf10}, \eqref{gf11} and \eqref{gfd11} observe that $c_n^l$ will also be a rational function of $n$. Then it follows that, in \eqref{gfd13}, two fractions are equal for natural numbers $n\geq k$, $k\geq 2d+2$, and consequently will be for real numbers $n$. If we denote by $N_s(G(x))$ the number of singularities of a rational function
$G(x)$ then we can easily verify, for all rational functions  $G$ and $\tilde{ G}$ of $x$ and a constant $a\neq 0$, that:

a) $N_s(G(x+a))=N_s(G(x))$,

b) $N_s(aG(x))=N_s(G(x))$,

c) $N_s(G(x)+\tilde{ G}(x))\leq N_s(G(x))+N_s(\tilde{ G}(x))$.

Using  property a) of $N_s$ we have $$N_s\left(\frac{n-k}{n-k+1}a_{n-k}\right)=N_s\left(\frac{n}{n+1}a_{n}\right) \text{ and } N_s\left(\frac{n-k+l+1}{n-k+l+2}\,c_{n-k+l+1}^{l}\right)=N_s\left(\frac{n}{n+1} \,c_n^l\right).$$ 
According to properties b) and c) of $N_s$, the $N_s$ of the left-hand side of \eqref{gfd12} is finite and independent of $k$. Thus, the right-hand side of \eqref{gfd12} has a finite number  of
singularities which is independent of $k$. As consequence there exists a $k_1\geq 3d+3$ for which $R_{l+1}R_{k-l}=0$
 for all $k \geq k_1-d-1$  and $k_1-d-1\leq l\leq k$. According to Corollary~\ref{cor1}, there exists a $k_0$ such that  $2\leq k_0\leq d+1$ and  $R_{k_0}\neq 0$. So, taking successively  
 $k=k_0+l$ with $l=k_1+d,k_1+d-1,...,k_1-d-1$ we get
$R_{k_1+d+1}=R_{k_1+d}=\cdots=R_{k_1-d}=0$. Then, by Corollary~\ref{cor03} we have  $R_{d+2}=R_{d+3}=\cdots=R_{2d+2}=0$.

\end{proof}

The fact that $a_n$ is a rational function of $n$ means that $F(\epsilon z)=\sum_{n\geq
0}\alpha_n(\epsilon z)^n$ (where $\epsilon$ is the quotient of the leading coefficients of the numerator and the denominator of $a_n$) is a generalized  hypergeometric series, i.e. of the form:

\begin{eqnarray}\label{F01}
{}_{p}F_{q}\left(
\begin{array}{llll}
\left(\mu_l\right)_{l=1}^p\\
\left(\nu_l\right)_{l=1}^q
\end{array};z\right)={}_{p}F_{q}\left(
\begin{array}{llll}
\mu_1,\mu_2,&...,&\mu_p\\
\nu_1,\nu_2,&...,&\nu_q
\end{array};z\right)=\sum_{n\geq 0}\frac{(\mu_1)_n(\mu_2)_n\cdots(\mu_p)_n}{(\nu_1)_n(\nu_2)_n\cdots(\nu_q)_n}\frac{z^n}{n!}
\end{eqnarray}
where $\left(\mu_l\right)_{l=k}^p$ denotes the array of complex parameters $\mu_k,\mu_{k+1},...,\mu_p$, and if $k>p$ we take the convention that $\left(\mu_l\right)_{l=k}^p$ is the empty array. The symbol $(\mu)_n$ stands for the shifted factorials, i.e. 
\begin{equation}
(\mu)_0=1,\;\;(\mu)_n=\mu(\mu+1)\cdots(\mu+n-1),\;\;n\geq 1. 
\end{equation}

As an interesting consequence, from Corollary~\ref{cor5} and Corollary~\ref{cor2} we state the following result, which can be interpreted as a generalization of the Appell case in the above Theorem 3 (see also \cite{Douak96}):

\begin{theorem}\label{th4}
Let  $\{P_n\}$ be a $d$-PS generated by \eqref{gf00} with $F(z)$ a generalized  hypergeometric series. Then $R(t)=R_2t^2/2+R_3t^3/3+\cdots+R_{d+1}t^{d+1}/(d+1)$. 
\end{theorem}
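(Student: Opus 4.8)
The plan is to observe that Theorem~\ref{th4} follows immediately once one translates the hypergeometric hypothesis on $F$ into the statement that $a_n=\alpha_n/\alpha_{n+1}$ is a rational function of $n$; the conclusion is then a direct chaining of Corollary~\ref{cor5} followed by Corollary~\ref{cor2}. The only genuine verification needed is this first link, since all the hard analysis has already been carried out in the earlier corollaries.

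First I would make the hypergeometric hypothesis explicit at the level of coefficients. Suppose that, for a suitable scaling constant $\epsilon$, the series $F(\epsilon z)={}_{p}F_{q}\!\left((\mu_l)_{l=1}^p;(\nu_l)_{l=1}^q;z\right)$ is of the form \eqref{F01}. Matching this against $F(\epsilon z)=\sum_{n\geq 0}\alpha_n\epsilon^n z^n$ yields
$$\alpha_n\epsilon^n=\frac{\prod_{l=1}^p(\mu_l)_n}{\prod_{l=1}^q(\nu_l)_n}\,\frac{1}{n!},$$
so that, using $(\mu)_{n+1}=(\mu)_n(\mu+n)$, the ratio of consecutive coefficients becomes
$$a_n=\frac{\alpha_n}{\alpha_{n+1}}=\epsilon\,\frac{(n+1)\prod_{l=1}^q(n+\nu_l)}{\prod_{l=1}^p(n+\mu_l)},$$
which is manifestly a rational function of $n$. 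I would also note that the rescaling by $\epsilon$ is harmless: replacing $F(z)$ by $F(\epsilon z)$ only multiplies $\alpha_n$ by $\epsilon^n$, hence divides $a_n$ by $\epsilon$, and this preserves rationality in $n$.

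With the rationality of $a_n$ in hand, I would invoke Corollary~\ref{cor5} to conclude that $R_{d+2}=R_{d+3}=\cdots=R_{2d+2}=0$, and then feed this vanishing into Corollary~\ref{cor2} to obtain $R(t)=R_2t^2/2+R_3t^3/3+\cdots+R_{d+1}t^{d+1}/(d+1)$, which is exactly the assertion.

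Since the substantial work, namely the singularity-counting argument of Corollary~\ref{cor5} (resting in turn on Corollaries~\ref{cor1} and~\ref{cor03}) and the inductive vanishing of Corollary~\ref{cor2}, is already established, I do not anticipate any real obstacle in this proof. The single point deserving a moment's care is confirming that the passage from $F$ to its rescaled form $F(\epsilon z)$ does not disturb the rationality of $a_n$, and this is immediate from the displayed computation above.
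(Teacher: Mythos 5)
Your proof is correct and is essentially the paper's own argument: the paper likewise observes that $(\mu)_{n+1}/(\mu)_n=n+\mu$ makes $a_n=\alpha_n/\alpha_{n+1}$ a rational function of $n$, and then applies Corollary~\ref{cor5} followed by Corollary~\ref{cor2}. Your only addition is spelling out the coefficient matching and the harmlessness of the rescaling by $\epsilon$, which the paper leaves implicit.
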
 

\begin{proof}
$F(z)=\sum_{n\geq 0}\alpha_nz^n$ has the form \eqref{F01}. Then $a_n=\alpha_n/\alpha_{n+1}$ is a rational function of $n$, since $(\mu)_{n+1}/(\mu)_{n}=n+\mu$. The use of Corollary~\ref{cor5} and Corollary~\ref{cor2} completes the proof.
\end{proof}
\begin{corollary}\label{cor06}
Let $R(t)=R_2t^2/2+R_3t^3/3+\cdots+R_{d+1}t^{d+1}/(d+1)$. Then,\\
i) If $R_{d+1}=0$ we have $c_{n+d}^{d}c_{n}^{d}=0$, for $n\geq d+1$.\\
ii) If $R_{d+1}\neq 0$ then
\begin{equation}\label{gf111a}
c_{n}^{d} =\frac{R_{d+1}}{d+1}\left((n+1)\frac{b_{n-d}}{b_n}-(n-d)\right),\;\;\text{ for } n\geq d+1,
\end{equation} 
where $b_{md+r}=(b_{d+r}-b_{r})m+b_{r},\;for\; m\geq 0,\; 1\leq r\leq d$.\\
iii) The $\{c_{n}^{m}\}_{1 \leq m \leq d-1}$ can be calculated recursively by solving the following $d$-order linear difference equations:
\begin{eqnarray}\label{gfd122}
&&\frac{1}{n-m+1}\left(R_{d+1}(n+2)-(n-d)c_{n-m}^{d}\right)\,c_n^{m}-\frac{1}{n-d+1}\left(R_{d+1}(n-d)+(d+1)c_{n}^{d}\right)\,c_{n-d}^{m}+\nonumber\\
&&+\sum_{l=m+1}^{d}R_{m+d+1-l}\left(\frac{n+2}{n-l+1} \,c_n^l-\frac{n-m-d+l}{n-m-d+l+1}\,c_{n-m-d+l}^{l}\right)
-\sum_{l=m+1}^{d-1}\frac{l+1}{n-l+1}c_{n}^{l}c_{n-l}^{m+d-l}\nonumber\\
&&=\sum_{l=m}^{d-1}\frac{R_{l+1}R_{m+d+1-l}}{n-l+1},\;\;\; 1\leq m\leq d-1,\;\;n\geq m+d+1.
\end{eqnarray}
\end{corollary}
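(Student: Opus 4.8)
The plan is to feed the hypothesis $R(t)=R_2t^2/2+\cdots+R_{d+1}t^{d+1}/(d+1)$, i.e. $R_j=0$ for every $j\ge d+2$, into the three families of identities \eqref{gfd11}, \eqref{gfd12}, \eqref{gfd13} of Proposition~\ref{prop2}. Only finitely many $R$-terms then survive; in particular \eqref{gfd13} (valid for $k\ge 2d+2$) becomes an identity $0=0$ and carries no information, so everything is encoded in \eqref{gfd12} for $d+1\le k\le 2d+1$. The idea is to read off the top diagonal $\{c_n^d\}$ from the extreme index $k=2d+1$, then to descend through $m=d-1,\dots,1$.

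For parts i) and ii) I would set $k=2d+1$ in \eqref{gfd12}. Among the coefficients $R_{k-l}$, $R_{l+1}$ only $R_{d+1}$ is nonzero, and the product sum collapses to its single term $l=d$; clearing the denominator $n-d+1$ leaves the bilinear lag-$d$ recurrence
\begin{equation*}
R_{d+1}\bigl((n+2)c_n^d-(n-d)c_{n-d}^d\bigr)-(d+1)\,c_n^d c_{n-d}^d=R_{d+1}^2,\qquad n\ge 2d+1 .
\end{equation*}
If $R_{d+1}=0$ this degenerates to $c_n^d c_{n-d}^d=0$, and the shift $n\mapsto n+d$ gives $c_{n+d}^d c_n^d=0$ for $n\ge d+1$, which is i). If $R_{d+1}\ne 0$, I would linearise this Riccati-type equation through the ansatz $c_n^d=\frac{R_{d+1}}{d+1}\bigl((n+1)\frac{b_{n-d}}{b_n}-(n-d)\bigr)$: writing $c_n^d=\frac{R_{d+1}}{d+1}y_n$ turns the recurrence into $(n+2)y_n-(n-d)y_{n-d}-y_n y_{n-d}=d+1$, and a direct substitution—in which the $b_{n-2d}/b_{n-d}$ contributions cancel and the purely polynomial-in-$n$ remainder collapses to the common factor $(n+1)(n-d+1)$—reduces everything to the linear second-difference identity $b_n-2b_{n-d}+b_{n-2d}=0$. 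Since this says $b_n-b_{n-d}$ is constant on each residue class modulo $d$, it integrates to $b_{md+r}=(b_{d+r}-b_r)m+b_r$, which together with the formula for $c_n^d$ is exactly \eqref{gf111a}. This proves ii).

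For part iii), for each fixed $m$ with $1\le m\le d-1$ I would put $k=m+d+1$ in \eqref{gfd12}. The constraint $R_j=0$ ($j\ge d+2$) kills every $R_{k-l}$ with $l<m$ and every $R_{l+1}$ with $l>d$, so the only surviving contributions are precisely those displayed in \eqref{gfd122}. Collecting the coefficient of $c_n^m$ and of $c_{n-d}^m$—the latter carrying the top diagonal $c_n^d$ already determined in ii)—exhibits \eqref{gfd122} as an inhomogeneous linear difference equation of lag $d$ (hence of order $d$) for the single sequence $\{c_n^m\}_{n\ge m+d+1}$, whose coefficients and right-hand side involve only the $R_j$ and the higher diagonals $\{c_n^l\}_{m<l\le d}$ computed at the earlier steps. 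Running $m$ downward from $d-1$ to $1$ then determines all the intermediate diagonals recursively, which is the content of iii).

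The hard part will be the bookkeeping: for each admissible $k$ one must track exactly which of the finitely many surviving $R$-products and $c$-products occur and assemble them into the compact coefficient structure of \eqref{gfd122}, paying particular attention to the terms $l=m$ and $l=d$ of the product sum and to the boundary term $l=m$ of the $R$-sum, which combine into the coefficients of $c_n^m$ and $c_{n-d}^m$. The only genuinely non-mechanical point is spotting the linearising substitution in ii) and verifying the (routine but lengthy) cancellation that turns the bilinear recurrence into $b_n-2b_{n-d}+b_{n-2d}=0$; once that is in hand, parts i)--iii) follow by specialising the index $k$ and, in iii), by the downward induction on $m$.
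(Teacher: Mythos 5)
Your proposal follows the paper's proof essentially step for step: the paper likewise sets $k=2d+1$ in \eqref{gfd12} to obtain the Riccati equation $R_{d+1}\left((n+2)c_n^d-(n-d)c_{n-d}^d\right)-(d+1)c_n^dc_{n-d}^d=R_{d+1}^2$ (whence i) when $R_{d+1}=0$), substitutes the ansatz \eqref{gf111a} to reduce it to the linear equation $b_n-2b_{n-d}+b_{n-2d}=0$ solved by summing twice along each residue class, and derives iii) by specialising \eqref{gfd12} to $d+1\leq k\leq 2d+1$ and putting $m=k-d-1$. The differences are purely presentational (your explicit intermediate recurrence $(n+2)y_n-(n-d)y_{n-d}-y_ny_{n-d}=d+1$ and the downward induction on $m$ are left implicit in the paper), so the two arguments coincide.
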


\begin{proof}\\
\textbf{The proof of i)} \\
Put $k=2d+1$ in \eqref{gfd12} to get the following Riccati equation for $\{c_{n}^{d}\}$:
\begin{equation}\label{gfd123}
R_{d+1}\left((n+2)\,c_n^d-(n-d)\,c_{n-d}^{d}\right)-(d+1)c_{n}^{d}c_{n-d}^{d}-R_{d+1}^{2}=0,\;\;for\; n\geq 2d+1.
\end{equation}
By taking $R_{d+1}=0$ in \eqref{gfd123}, i) follows immediately.\\
\textbf{The proof of ii)} \\
Substituting \eqref{gf111a} in \eqref{gfd123} we find the $2d$-linear homogeneous equation
\begin{equation}\label{gfd124}
b_n-2b_{n-d}+b_{n-2d}=0,\;\;for\; n\geq 2d+1.
\end{equation}
By writing $n=md+r$, where $m,r$ are natural numbers with $1\leq r \leq d$, the equation \eqref{gfd124} can be solved by summing twice to find that\\
 $$b_{md+r}=(b_{d+r}-b_{r})m+b_{r},\;for\; m\geq 0,\; 1\leq r\leq d.$$
\textbf{The proof of iii)} \\
Since $d+1\leq k\leq 2d+1$ we have $R_{k+1}=0$ and $R_{k-l}=0$ for $l\leq k-2-d$. So, we can write \eqref{gfd12} as
\begin{eqnarray}\label{gfd125}
&&\sum_{l=k-d}^{d}R_{k-l}\left(\frac{n+2}{n-l+1} \,c_n^l-\frac{n-k+l+1}{n-k+l+2}\,c_{n-k+l+1}^{l}\right)+R_{d+1}\left(\frac{n+2}{n-k+d+2} \,c_n^{k-1-d}-\frac{n-d}{n-d+1}\,c_{n-d}^{k-1-d}\right)\nonumber\\
&&-\sum_{l=k-d}^{d-1}\frac{l+1}{n-l+1}c_{n}^{l}c_{n-l}^{k-l-1}-\frac{k-d}{n-k+d+2}c_{n}^{k-1-d}c_{n-k+1+d}^{d}-\frac{d+1}{n-d+1}c_{n}^{d}c_{n-d}^{k-1-d}=\nonumber\\
&&=\sum_{l=k-1-d}^{k-2}\frac{R_{l+1}R_{k-l}}{n-l+1},\;\;\; d+1\leq k\leq 2d+1,\;\;n\geq k.
\end{eqnarray}
Putting $m=k-d-1 \neq 0$ in \eqref{gfd125} and rearranging we obtain \eqref{gfd122}.
\end{proof}

\begin{remark}
In the case of $d$-OPSs, in Corollary~\ref{cor06} the polynomial $R(t)$ is of degree $d+1$. Otherwise (i.e. $R_{d+1}=0$), we have a contradiction with the regularity conditions $\gamma_{n}^d\neq 0$, for $n\geq d$.
\end{remark}

\begin{corollary}\label{cor02}
The $d$-PS is classical if and only if $R(t)=R_2t^2/2+R_3t^3/3+\cdots+R_{d+1}t^{d+1}/(d+1)$ with $R_{d+1}\neq 0$. 
\end{corollary}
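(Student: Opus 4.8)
The plan is to pass to the derivative set $\{Q_n\}_{n\ge0}$, $Q_n:=(n+1)^{-1}P_{n+1}'$, of Definition~\ref{def2} and to exploit that it is generated by a function of the same type with the \emph{same} $R$. Differentiating \eqref{gf00} in $x$ gives $tF'(xt-R(t))=\sum_{n\ge0}\alpha_nP_n'(x)t^n$, whence $F'(xt-R(t))=\sum_{n\ge0}(n+1)\alpha_{n+1}Q_n(x)t^n$; dividing by $\alpha_1$ (so that $\bar F:=F'/\alpha_1$ satisfies $\bar F(0)=1$) shows that the monic set $\{Q_n\}$ is generated by $\bar F(xt-R(t))$ with the same $R(t)$ and coefficients $\bar\alpha_n=(n+1)\alpha_{n+1}/\alpha_1$. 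By Definition~\ref{def2} it then suffices to decide when $\{Q_n\}$ is a $d$-PS, and Proposition~\ref{prop2} (with its corollaries and the same $R$) applies to $\{Q_n\}$ whenever it is one. I also record that $\gamma_n^0=0$ for $\{P_n\}$ by \eqref{gf9}.

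The core step is to compute the $x$-expansion of $\{Q_n\}$ explicitly. Differentiating \eqref{dorth0}, using $\gamma_n^0=0$ and $P_j'=jQ_{j-1}$, gives $P_n+nxQ_{n-1}=(n+1)Q_n+\sum_{l=1}^d(n-l)\gamma_n^lQ_{n-l-1}$. Rewriting Proposition~\ref{prop1}, i.e. \eqref{gf1}, as $P_n=xQ_{n-1}-\tfrac1{n\alpha_n}\sum_{k=1}^{n-1}(n-k)R_{k+1}\alpha_{n-k}Q_{n-k-1}$ and eliminating $P_n$, then shifting $m=n-1$, yields the closed expansion
\[
xQ_m=Q_{m+1}+\sum_{k=1}^{m}\frac{(m+1-k)R_{k+1}\alpha_{m+1-k}}{(m+1)(m+2)\alpha_{m+1}}\,Q_{m-k}+\sum_{l=1}^{d}\frac{(m+1-l)\gamma_{m+1}^l}{m+2}\,Q_{m-l}.
\]
Since $\{Q_j\}$ is a basis this is the unique expansion, and the $\gamma$-sum reaches only gap $d$; hence for every gap $k\ge d+1$ the coefficient of $Q_{m-k}$ equals $\frac{(m+1-k)R_{k+1}\alpha_{m+1-k}}{(m+1)(m+2)\alpha_{m+1}}$, i.e. it comes from $R_{k+1}$ alone.

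Both implications now follow. For the forward direction, if $\{P_n\}$ is classical then $\{Q_m\}$ is a $d$-PS, so its expansion terminates at gap $d$; by the displayed coefficient this forces $R_{k+1}=0$ for all $k\ge d+1$ (the prefactor is nonzero for $m\ge k$), i.e. $R(t)=R_2t^2/2+\cdots+R_{d+1}t^{d+1}/(d+1)$. To exclude $R_{d+1}=0$: then the top coefficient is $\tilde\gamma_m^d=\frac{m+1-d}{m+2}\gamma_{m+1}^d$, while Corollary~\ref{cor06}(i) gives $c_{n+d}^dc_n^d=0$; in the regular case this already contradicts the non-nullity of $\{\gamma_n^d\}$, so $R_{d+1}\ne0$. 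For the converse, suppose $R(t)=\sum_{k=2}^{d+1}R_kt^k/k$ with $R_{d+1}\ne0$: the displayed expansion has no terms beyond gap $d$, so $\{Q_m\}$ obeys a $(d+1)$-order recursion \eqref{dorth0}, and its leading sequence is non-null because $\tilde\gamma_m^d\equiv0$ would mean $c_n^d=-R_{d+1}/n$, which substituted into the Riccati equation \eqref{gfd123} gives an expression proportional to $R_{d+1}^2$ that does not vanish for large $n$ — a contradiction. Hence $\{Q_m\}$ is a $d$-PS and $\{P_n\}$ is classical.

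I expect the exclusion of $R_{d+1}=0$ in the forward direction to be the main obstacle. The expansion makes the bound $\deg R\le d+1$ immediate, but separating $\deg R=d+1$ from $\deg R\le d$ amounts to showing that a set generated by $F(xt-R)$ with $\deg R\le d$ cannot be a genuine $d$-PS, i.e. that its top coefficient $\gamma_n^d$ vanishes identically. This is transparent in the regular ($d$-orthogonal) setting via Corollary~\ref{cor06}(i), but in the general $d$-PS case one must still rule out the sparse solutions of $c_{n+d}^dc_n^d=0$; the natural way is to argue that $\deg R=\rho$ forces $\{P_n\}$ to be a genuine $(\rho-1)$-PS (as already in the three-term case $\rho=2$, where the set is ultraspherical, Hermite or Chebyshev), so that $\gamma_n^d\equiv0$ whenever $\rho\le d$, contradicting \eqref{dorth02}.
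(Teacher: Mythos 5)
Your construction is, in substance, the paper's own. Differentiating \eqref{dorth0} and eliminating $P_n$ via \eqref{gf1} is exactly how the paper obtains \eqref{gf3}, and your displayed expansion of $xQ_m$ is precisely \eqref{clas1} with the coefficients \eqref{clas2} (your $\alpha_{m+1-k}$ is the correct factor; the paper's $\alpha_{n+l-1}$ in \eqref{clas2} is a transposition typo for $\alpha_{n+1-l}$). Your deduction that $R_{k+1}=0$ for all $k\geq d+1$ from uniqueness of the expansion in the basis $\{Q_j\}$ matches the paper's first step, and your converse is correct and is only a mild variant of the paper's: you substitute the value $c_n^d=-R_{d+1}/n$ forced by $\tilde\gamma_m^d\equiv 0$ into the Riccati equation \eqref{gfd123} for $\{P_n\}$ and check the resulting expression, proportional to $R_{d+1}^2$, cannot vanish; the paper instead applies \eqref{gfd123} to the derivative quantities $\tilde c_n^d=\frac{(n+1)\alpha_{n+1}}{(n-d+1)\alpha_{n-d+1}}\tilde\gamma_n^d$ (legitimate, since $\{Q_n\}$ is generated by $F'(xt-R(t))$ with the same $R$, as you note) and observes that $\tilde c_n^d\equiv 0$ would force $-R_{d+1}^2=0$. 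Both yield an $n_0$ with $\tilde\gamma_{n_0}^d\neq 0$, hence \eqref{dorth02} for $\{Q_n\}$.

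The genuine gap is the one you flag yourself: excluding $R_{d+1}=0$ in the forward direction for a general (not necessarily $d$-orthogonal) $d$-PS. Corollary~\ref{cor06}(i) yields only the product condition $c_{n+d}^d c_n^d=0$ for $n\geq d+1$, which admits sparse nonzero patterns (along each chain $n_0,\,n_0+d,\,n_0+2d,\dots$ no two consecutive entries nonzero); such a pattern would make $\{\tilde\gamma_m^d\}=\{\frac{m+1-d}{m+2}\gamma_{m+1}^d\}$ a non-null sequence, so $\{Q_m\}$ could still satisfy \eqref{dorth02} and no contradiction with classicality would arise. Your argument covers only the regular case $\gamma_n^d\neq 0$ for all $n\geq d$, which is the hypothesis of the Remark following Corollary~\ref{cor06}, not of the corollary under review; and your proposed patch --- that $\deg R=\rho\leq d$ forces $\{P_n\}$ to be a genuine $(\rho-1)$-PS with $\gamma_n^d\equiv 0$ --- is asserted, not proven, and is a rigidity statement at least as strong as the step it is meant to replace. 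For comparison, the paper disposes of this point in one line by reading \eqref{gf111a} as valid with $R_{d+1}=0$ inserted, concluding $\gamma_{n+1}^d=0$ and hence $\tilde\gamma_n^d=0$ for $n\geq d$, so that $\{Q_n\}$ violates \eqref{dorth02}; note that \eqref{gf111a} is stated in Corollary~\ref{cor06} only under the hypothesis $R_{d+1}\neq 0$, so the paper's treatment is itself terse at exactly the point you identify as the main obstacle. Your diagnosis of where the difficulty sits is accurate, but your write-up leaves that step open rather than closing it.
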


\begin{proof}\text{  }\\
1) Assume that the $d$-PS is classical. From \eqref{gf3} and Definition~\ref{def2}  we have $R_{k+1}=0$ for $d+1\leq k\leq n-1$. We get $R(t)$ by taking $n\geq 2d+2$ and using Corollary~\ref{cor2}. Now we show that $R_{d+1}\neq 0$. Equation \eqref{gf3} becomes
\begin{equation}\label{clas1}
xQ_n(x)=Q_{n+1}(x)+\sum_{l=1}^{d}\tilde{\gamma}_{n}^{l}Q_{n-l}(x),\quad n\geq 0,
\end{equation}
where $Q_n(x)=(n+1)^{-1}P'_{n+1}(x)$ and
\begin{equation}\label{clas2}
\tilde{\gamma}_{n}^{l}=\frac{n+1-l}{n+2}\left(\gamma_{n+1}^{l}+\frac{R_{l+1}\alpha_{n+l-1}}{(n+1)\alpha_{n+1}}\right), \quad n\geq d.
\end{equation}
From \eqref{gf111a}, if $R_{d+1}=0$ then $\gamma_{n+1}^{d}=0$, and \eqref{clas2} gives $\tilde{\gamma}_{n}^{d}=0$, for $n\geq d$. So, $\{Q_n\}$ is not a $d$-PS which contradicts the fact that $\{P_n\}$ is classical (see Definition~\ref{def2}).

2) Assume that $R(t)=R_2t^2/2+R_3t^3/3+\cdots+R_{d+1}t^{d+1}/(d+1)$ with $R_{d+1}\neq 0$ , then the PS of the derivatives $\{Q_n\}$ satisfy \eqref{clas1}  and are generated by $F'(xt-R(t))=\sum_{n\geq 0}(n+1)\alpha_{n+1}Q_n(x)t^n$. Using  Corollary~\ref{cor06} we find that  $\tilde{c}_{n}^{d}:=(n+1)\alpha_{n+1}\tilde{\gamma}_{n}^{d}/((n-d+1)\alpha_{n-d+1})$ satisfies \eqref{gfd123}. And  according to the same expression \eqref{gfd123}, we should have,  if $c_{n}^{d}=0$ or $\gamma_{n}^{d}=0$ (for $n\geq d+1$), $R_{d+1}=0$. Therefore,  there exists for $\tilde{c}_{n}^{d}$, since $R_{d+1}\neq 0$, a $n_0\geq d+1$  such that $\tilde{c}_{n_0}^{d}\neq 0$ or $\tilde{\gamma}_{n_0}^{d}\neq 0$. This means that $\{P_n\}$ is classical. 

\end{proof}

\section{The $d$-symmetric case}

The main result of this section is the following:
\begin{theorem}\label{Th5}
If $\{P_n\}$ is a $d$-symmetric $d$-PS generated by \eqref{gf00} then $R(t)=R_{d+1}t^{d+1}/(d+1)$.
\end{theorem}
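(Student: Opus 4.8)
The plan is to exploit the $d$-symmetry \eqref{dorth03} twice: first to collapse the recursion \eqref{dorth0}, then to restrict the support of $R$. Substituting $x\mapsto\omega x$ in \eqref{dorth0}, using $P_n(\omega x)=\omega^n P_n(x)$ and dividing by $\omega^{n+1}$, I would compare with \eqref{dorth0} to read off $\gamma_n^l(1-\omega^{-(l+1)})=0$ for every $l$. Since $\omega$ is a primitive $(d+1)$-th root of unity and $0\le l\le d$, the factor $1-\omega^{-(l+1)}$ vanishes only when $l=d$; hence $\gamma_n^l=0$ (equivalently $c_n^l=0$) for $0\le l\le d-1$, and \eqref{dorth0} reduces to the sparse recursion $xP_n(x)=P_{n+1}(x)+\gamma_n^d P_{n-d}(x)$. (The case $l=0$ recovers \eqref{gf9}.)

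Next I would pin down the coefficients of $R$. Read on the generating side, $P_n(\omega x)=\omega^n P_n(x)$ turns \eqref{gf00} into $F(\omega xt-R(t))=F(\omega xt-R(\omega t))$, and since a nonconstant formal series cannot be periodic in its argument this forces $R(t)=R(\omega t)$, i.e. $R_n=0$ whenever $(d+1)\nmid n$. This gives $R_2=\cdots=R_d=0$ together with $R_{d+2}=\cdots=R_{2d+1}=0$, leaving only $R_{d+1},R_{2d+2},R_{3d+3},\dots$ in play. The same vanishings can be obtained directly from Proposition~\ref{prop2}: feeding $c_n^l=0$ for $1\le l\le d-1$ into \eqref{cgf10}, \eqref{gf11} and \eqref{gfd11} for $k=1,\dots,d-1$ and evaluating at $n=k$ (where the $a_{n-k}=a_0$ terms drop and a nonzero $a_k$ survives) gives $R_2=\cdots=R_d=0$ by induction, while \eqref{gfd12} for $d+1\le k\le 2d$ degenerates to $R_{k+1}(a_n-\cdots)=0$ and yields $R_{d+2}=\cdots=R_{2d+1}=0$.

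It then remains only to kill $R_{2d+2}$: granting this, Corollary~\ref{cor2} applies to $R_{d+2}=\cdots=R_{2d+2}=0$ to make $R$ a polynomial of degree at most $d+1$, which with $R_2=\cdots=R_d=0$ gives exactly $R(t)=R_{d+1}t^{d+1}/(d+1)$ (alternatively, \eqref{gfd13} run inductively on $k=M(d+1)-1$, $M\ge 3$, kills each $R_{M(d+1)}$ once $R_{2d+2}=0$). As preparation I would take $k=d$ in \eqref{gfd11}, which with all lower $R$'s and $c$'s vanishing collapses to the closed form $c_n^d=\frac{R_{d+1}}{d+1}\bigl((n-d+1)\tfrac{a_n}{a_{n-d}}-(n-d)\bigr)$; in particular $R_{d+1}=0$ would make $c_n^d\equiv 0$, hence $\gamma_n^d\equiv 0$, contradicting \eqref{dorth02}, so $R_{d+1}\ne 0$.

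The main obstacle is precisely the elimination of $R_{2d+2}$. Unlike the other coefficients it is a multiple of $d+1$, so the reduction $R(t)=R(\omega t)$ does not touch it, and it can only be controlled through the nonlinear recurrence data. Tracing the surviving terms, the single relation involving it is \eqref{gfd12} at $k=2d+1$, which after inserting the known zeros becomes the Riccati-type identity \eqref{gfd123} augmented by a term $R_{2d+2}(n-d+1)\bigl(a_n-\tfrac{n-2d-1}{n-2d}a_{n-2d-1}\bigr)$, whose coefficient is $(d+2)a_{2d+1}\ne 0$ at $n=2d+1$. The difficulty is that this equation couples $R_{2d+2}$ to the still-unknown $c_n^d$, and the shift by $2d+1$ mixes residue classes modulo $d$, so no single evaluation isolates $R_{2d+2}$. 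My plan here is to substitute the closed form for $c_n^d$ and linearize as in Corollary~\ref{cor06}, representing $c_n^d$ through a sequence $b_n$ for which the pure Riccati is equivalent to $b_n-2b_{n-d}+b_{n-2d}=0$; the $R_{2d+2}$-term then enters as an inhomogeneity, and I would argue that it is incompatible with this homogeneous relation holding for all $n\ge 2d+1$ (equivalently, that the over-determination coming from the higher identities \eqref{gfd13} leaves no room for it), forcing $R_{2d+2}=0$. This is the technical heart; once it is settled, Corollary~\ref{cor2} closes the argument at once.
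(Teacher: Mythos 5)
Your reductions agree with the paper's proof exactly up to the decisive point. The symmetry argument collapsing \eqref{dorth0} to $xP_n=P_{n+1}+\gamma_n^dP_{n-d}$ and forcing $R_n=0$ whenever $(d+1)\nmid n$ is the paper's Lemma~\ref{Lem1} (proved there from \eqref{gf1} rather than from the generating identity, but to the same effect, and in either version one needs the standing hypothesis $\alpha_n\neq 0$ of Proposition~\ref{prop2}); your remark that $R_{d+1}=T_1\neq 0$ follows from \eqref{cdn} and \eqref{dorth02}; and your reduction of everything to killing $T_2:=R_{2d+2}$, after which Corollary~\ref{cor2} finishes, is the paper's Lemma~\ref{Lem3}. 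But your proposal stops where the paper's proof begins: establishing $T_2=0$ occupies essentially all of Section~3, and the plan you sketch for it fails. First, it is circular: the linearization $c_n^d=\frac{T_1}{d+1}\bigl((n+1)\frac{b_{n-d}}{b_n}-(n-d)\bigr)$ with $b_n-2b_{n-d}+b_{n-2d}=0$ is Corollary~\ref{cor06}, whose hypothesis is precisely that $R$ is already a polynomial of degree at most $d+1$; the pure Riccati equation \eqref{gfd123} is what one gets \emph{after} $T_2=0$, so you cannot use its homogeneous linearized form as the benchmark against which the $T_2$-term is declared an incompatible perturbation. Second, with $T_2\neq 0$ the relevant identity is \eqref{gfdsym12}, and the extra term $T_2\bigl(a_n-\frac{n-2d-1}{n-2d}a_{n-2d-1}\bigr)$ is not a known forcing term: it involves the unknown sequence $a_n=(n+1)/b_n$ itself. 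Worse, for each $n$ the equation \eqref{gfdsym12} is affine in the newest unknown $a_n$ (via \eqref{cdn}, $c_n^d$ is affine in $a_n$ given earlier terms), so for generic $T_1,T_2\neq 0$ it is recursively solvable and admits plenty of solutions; no contradiction can be extracted from it alone. Your claim that \eqref{gfd12} at $k=2d+1$ is ``the single relation involving'' $R_{2d+2}$ is also false: after symmetry, $T_2$ enters every member $E_k(n)$ of the hierarchy \eqref{gfd1313}, $k\geq 2$, and exploiting that whole hierarchy is unavoidable.

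What the paper actually does to close this gap — and what your proposal neither contains nor approximates — is a long global argument: it forms $D_k=T_k^2-T_{k-1}T_{k+1}$ and splits into three cases ($D_k\neq 0$ for all large $k$; $D_k=0$ for all large $k$, which forces $T_k=ab^k$ geometric; and the mixed case). The first and third branches are resolved by manufacturing, from differences of the $E_k(n)$, relations showing that $a_n$ is a rational function of $n$, whence $T_2=0$ by Corollary~\ref{cor5} (itself resting on Corollary~\ref{cor03} and a singularity-counting argument), with a sub-branch requiring the solution of the difference equation \eqref{Dk6a}. The geometric branch requires the digamma-function asymptotics \eqref{a2kasympt}--\eqref{a2kk} of $a_{(d+1)k}$, leading to $\lambda_1=\lambda_2=0$, hence $T_2=\frac{d+1}{d+2}bT_1$ and then \eqref{T2d1}, i.e. $\frac{d-1}{(d+2)^3}T_1^2=0$, a contradiction for $d\neq 1$, the case $d=1$ being delegated to \cite{meskzahaf}. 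Since none of this machinery (nor any substitute for it) appears in your proposal, the argument has a genuine gap at its technical heart: the elimination of $R_{2d+2}$ is asserted as a hope, not proved.
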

Theorem~\ref{Th5} generalizes Theorem~\ref{thmm0} and Theorem~\ref{thmm1} mentioned above. Its proof is quite similar to that of Theorem~\ref{thmm0} in \cite{meskzahaf} and it requires the following Lemmas.
\begin{lemma}\label{Lem1}
If $\{P_n\}$ is a $d$-symmetric $d$-PS generated by \eqref{gf00} then
\begin{equation}\label{dsym0}
R(t)=\sum_{k\geq 1}\frac{R_{k(d+1)}}{k(d+1)}t^{k(d+1)}. 
\end{equation}
\end{lemma}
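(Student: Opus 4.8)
The plan is to translate the $d$-symmetry condition $P_n(\omega x)=\omega^n P_n(x)$ into a functional-equation constraint on the generating function $F(xt-R(t))$, and then read off which coefficients $R_k$ must vanish. The key observation is that $d$-symmetry means each $P_n$ contains only powers of $x$ congruent to $n$ modulo $d+1$; equivalently, replacing $x\mapsto\omega x$ multiplies $P_n$ by $\omega^n$. I would first substitute $x\mapsto\omega x$ in the defining relation \eqref{gf00} to obtain
\begin{equation}\label{lem1step1}
F(\omega xt-R(t))=\sum_{n\geq 0}\alpha_n P_n(\omega x)t^n=\sum_{n\geq 0}\alpha_n\omega^n P_n(x)t^n.
\end{equation}
The right-hand side is obtained from the original series $\sum_{n\geq 0}\alpha_n P_n(x)t^n=F(xt-R(t))$ by the rescaling $t\mapsto\omega t$. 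Hence $d$-symmetry is equivalent to the functional identity
\begin{equation}\label{lem1step2}
F(\omega xt-R(t))=F(x(\omega t)-R(\omega t)),\quad\text{i.e.}\quad F(\omega xt-R(t))=F(\omega xt-R(\omega t)).
\end{equation}

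Next I would exploit that the arguments of $F$ on the two sides of \eqref{lem1step2} agree in their $x$-dependent part (both equal $\omega xt$), so comparing them forces $R(t)=R(\omega t)$ as formal power series in $t$, at least on the support where $F$ is locally injective. More carefully, since $x$ is a free indeterminate and $F(z)=\sum_{n\geq 0}\alpha_n z^n$ with $\alpha_0=1$, I would expand both sides of \eqref{lem1step2} as formal series in $x$ and $t$ and match coefficients; the leading $x$-behaviour already pins down $F$ as a genuine (nonconstant) series, and then equality of the two arguments modulo the fibers of $F$ yields $R(t)=R(\omega t)$. Writing $R(t)=\sum_{k\geq 1}R_k t^k/k$, the relation $R(t)=R(\omega t)$ gives $R_k=\omega^k R_k$ for every $k$, so $R_k=0$ unless $\omega^k=1$, that is unless $(d+1)\mid k$. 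This is exactly the conclusion \eqref{dsym0}.

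The main obstacle I anticipate is justifying the passage from \eqref{lem1step2} to $R(t)=R(\omega t)$ rigorously at the level of formal power series, since one cannot simply ``cancel $F$'' when $F$ need not be invertible. The clean way around this is to avoid $F$ altogether and instead use the structural characterization already available in the paper: by Proposition~\ref{prop1}, the relation \eqref{gf1} links the $R_k$ directly to the polynomials $P_n$ and their derivatives, and $d$-symmetry imposes that $P_n(x)$ is a polynomial in $x^{d+1}$ times $x^{n\bmod(d+1)}$. I would therefore feed the parity/gap structure of the $P_n$ into \eqref{gf1} and track which $R_{k+1}$ are allowed to be nonzero: the term $R_{k+1}\alpha_{n-k}P'_{n-k}(x)$ shifts the degree-class by $k$, and consistency with the single class occupied by $xP_n'(x)-$ (multiple of) $P_n(x)$ forces $R_{k+1}=0$ whenever $k\not\equiv 0\pmod{d+1}$. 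Either route — the functional equation \eqref{lem1step2} or the coefficient bookkeeping in \eqref{gf1} — delivers \eqref{dsym0}; I would present the functional-equation argument as the main line and fall back on \eqref{gf1} to make the cancellation step precise.
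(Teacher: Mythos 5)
Your proposal is sound, and its two branches line up with the paper as follows. Your fallback branch \emph{is} the paper's proof: the paper substitutes $x\mapsto\omega x$ in \eqref{gf1}, uses $P_n(\omega x)=\omega^nP_n(x)$ together with $P_n'(\omega x)=\omega^{n-1}P_n'(x)$, divides by $\omega^n$ and subtracts the result from \eqref{gf1} to get $\sum_{k=1}^{n-1}R_{k+1}\alpha_{n-k}\bigl(1-\omega^{-k-1}\bigr)P_{n-k}'(x)=0$; since the $P_{n-k}'$ have distinct degrees $n-k-1$ they are linearly independent, whence $R_k\alpha_{n-k+1}(1-\omega^{-k})=0$ for $2\le k\le n$, and $\omega^k\neq 1$ exactly when $(d+1)\nmid k$ gives \eqref{dsym0}. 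That subtraction trick is precisely your ``degree-class bookkeeping'' in \eqref{gf1} made mechanical, so on this branch you and the paper coincide (both versions tacitly need some $\alpha_{n-k+1}\neq0$, e.g.\ take $n=k$ so only $\alpha_1\neq0$ is used, which degree-exactness of the generated PS guarantees).

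Your main line, the functional identity $F(\omega xt-R(t))=F(\omega xt-R(\omega t))$, is a genuinely different route, and the gap you flag at the ``cancel $F$'' step is real as written but closable without any injectivity of $F$. First, since the coefficient of $x^jt^N$ in $H(xt,t)$ equals the coefficient of $u^jt^{N-j}$ in $H(u,t)$, the substitution $u=xt$ is injective on formal series, so the identity upgrades to $F(u-R(t))=F(u-R(\omega t))$ with $u,t$ independent indeterminates. Setting $D(t)=R(t)-R(\omega t)$ and expanding formally, $F\bigl(u-R(\omega t)-D(t)\bigr)=\sum_{j\ge0}\frac{(-D(t))^j}{j!}F^{(j)}\bigl(u-R(\omega t)\bigr)$, suppose $D\neq0$ with lowest term $d_mt^m$: the $j\ge2$ terms have $t$-order at least $2m$, and $F'\bigl(u-R(\omega t)\bigr)=F'(u)+O(t^2)$, so comparing coefficients of $t^m$ yields $d_mF'(u)=0$, forcing $F$ constant, which contradicts that \eqref{gf00} generates a PS. Hence $R(t)=R(\omega t)$, i.e.\ $R_k(1-\omega^k)=0$, which is \eqref{dsym0}. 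In comparison, the paper's route is shorter once Proposition~\ref{prop1} is in hand and needs only linear independence of polynomials, while your functional-equation route works directly at the generating-function level and isolates cleanly where nondegeneracy of $F$ enters; with the lowest-order-in-$D$ argument supplied, either version is complete.
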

\begin{proof}
Let $\{P_n\}$ be a $d$-symmetric $d$-PS satisfying \eqref{dorth0} and generated by \eqref{gf00}.
Then it has, according to Definition~\ref{def3}, the property
\begin{equation}\label{dsym}
P_n(\omega x)=\omega ^nP_n(x),
\end{equation}
where $\omega=\exp(2\pi i/(d+1))$. It follows that   \eqref{dorth0} becomes \cite{MaroniDouak}
\begin{eqnarray}
\left\{
\begin{array}{l}
xP_n(x)=P_{n+1}(x)+\gamma_n^d P_{n-d}(x),\quad n\geq
0,\\
P_{-n}(x)=0,\;\;1\leq n\leq d,\;\;\text{and }P_0(x)=1.
\end{array}
\right.
\end{eqnarray} 
Let us show that $R_k=0$ when $k$ is not a multiple of $d+1$. First  we replace $x$ by $\omega x$ in \eqref{gf1} and use \eqref{dsym} with $P'_n(\omega x)=\omega^{n-1}P'_n(x) $ to get 
\begin{equation}\label{gfdsym} 
\alpha_n xP'_n(x)-\sum_{k=1}^{n-1}R_{k+1}\alpha_{n-k}\omega^{-k-1}P'_{n-k}(x)=n\alpha_nP_n(x), \;\; n\geq 2.
\end{equation}
Subtracting \eqref{gfdsym} from \eqref{gf1} gives
\begin{equation}
\sum_{k=1}^{n-1}R_{k+1}\alpha_{n-k}(1-\omega^{-k-1})P'_{n-k}(x)=0,\;\;n\geq 2
\end{equation}
which leads to
$$R_{k}\alpha_{n-k+1}(1-\omega^{-k})=0,\;\; \text{for }2\leq k\leq n,\;\;n\geq 2.$$
Since  $\omega^{k}\neq 1$, provided $k$ is not a multiple of $d+1$, gives the result.
\end{proof}

By Lemma~\ref{Lem1} and putting $T_k=R_{k(d+1)}$ for $k\geq 0$, the equations in Proposition~\ref{prop2} simplify to particular forms. Indeed, from  \eqref{cgf10}, \eqref{gf11} and \eqref{gfd11} we get
\begin{equation}\label{cdn}
c_n^d=\frac{T_{1}}{d+1}\left((n-d+1)\frac{a_n}{a_{n-d}}-(n-d)\right),\;\;\text{for } n\geq d.
\end{equation}
The equation \eqref{gfd12}, with $k=2d+1$, becomes
\begin{eqnarray}\label{gfdsym12}
&& T_{2}\left(a_n-\frac{n-2d-1}{n-2d}a_{n-2d-1}\right)
+T_{1}\left(\frac{n+2}{n-d+1} \,c_n^d-\frac{n-d}{n-d+1}\,c_{n-d}^{d}\right)\nonumber\\
&&-\frac{d+1}{n-d+1}c_{n}^{d}c_{n-d}^{d}=\frac{T_{1}^2}{n-d+1},\;\;n\geq 2d+1,
\end{eqnarray}
which by \eqref{cdn} takes the form
\begin{equation}\label{gf1102}
\frac{(d+1)T_{2}}{T_{1}^2}\left(1-\frac{n-2d-1}{n-2d}\frac{a_{n-2d-1}}{a_{n}}\right)=\frac{n+1}{a_n}-\frac{2(n-d+1)}{a_{n-d}}+\frac{n-2d+1}{a_{n-2d}},\;\;\text{
for } n\geq 2d+1.
\end{equation}
Finally, the equation \eqref{gfd13} simplifies to
\begin{eqnarray}\label{gfd1313}
&& T_{k+1}\left(a_n-\frac{n-k(d+1)-d}{n-k(d+1)-d+1}a_{n-k(d+1)-d}\right)
+T_{k}\left(\frac{n+2}{n-d+1} \,c_n^d-\frac{n-k(d+1)+1}{n-k(d+1)+2}\,c_{n-k(d+1)+1}^{d}\right)\nonumber\\
&&=\sum_{l=0}^{k-1}\frac{T_{l+1}T_{k-l}}{n-l(d+1)-d+1},\;\;\; k\geq 2,\;\;n\geq k(d+1)+d.
\end{eqnarray}
This equation will be denoted by $E_k(n)$ in below.

\begin{lemma}\label{Lem3}
If  $T_{2}=0$ then  $R(t)=T_{1}t^{d+1}/(d+1)$.
\end{lemma}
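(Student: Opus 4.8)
The plan is to prove, by induction on $k\geq 2$, that $T_k=0$; once this is in hand, Lemma~\ref{Lem1} gives $R(t)=\sum_{k\geq 1}T_k t^{k(d+1)}/(k(d+1))=T_1 t^{d+1}/(d+1)$. The base case $k=2$ is exactly the hypothesis. For the inductive step I would assume $T_2=T_3=\cdots=T_k=0$ and examine $E_k(n)$, that is \eqref{gfd1313}. Every summand on its right-hand side is a product $T_{l+1}T_{k-l}$ with $0\leq l\leq k-1$: for $l=0$ and $l=k-1$ one factor is $T_k$, while for $1\leq l\leq k-2$ both indices $l+1$ and $k-l$ lie between $2$ and $k-1$; hence all these products vanish by the induction hypothesis. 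The coefficient $T_k$ of the second term on the left also vanishes, so $E_k(n)$ collapses to
\[
T_{k+1}\left(a_n-\frac{n-m}{n-m+1}\,a_{n-m}\right)=0,\qquad m:=k(d+1)+d,\quad n\geq m.
\]
It then remains to rule out $T_{k+1}\neq 0$.

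The leverage comes from the hypothesis $T_2=0$, which I would feed into \eqref{gf1102}. Writing $w_n:=(n+1)/a_n$ (finite and nonzero, since $\alpha_n\neq 0$ forces $a_n\neq 0$), the three terms on the right of \eqref{gf1102} are precisely $w_n$, $w_{n-d}$ and $w_{n-2d}$, so $T_2=0$ yields
\[
w_n-2w_{n-d}+w_{n-2d}=0,\qquad n\geq 2d+1.
\]
This is the homogeneous equation \eqref{gfd124} already solved in Corollary~\ref{cor06}; summing twice shows that on each residue class modulo $d$ the sequence $w_n$ is eventually affine, i.e. $w_n=p_r n+q_r$ for all large $n$ with $n\equiv r\pmod{d}$, the constants $p_r,q_r$ depending only on $r$.

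Finally, suppose $T_{k+1}\neq 0$. Dividing the collapsed $E_k(n)$ by $T_{k+1}$ and substituting $a_n=(n+1)/w_n$ turns it into the scalar identity $w_n(n-m)=w_{n-m}(n+1)$ for all $n\geq m$. Fixing a class $n\equiv\rho\pmod d$ and noting $m\equiv k\pmod d$, the factor $w_{n-m}$ ranges over the class $\rho':=(\rho-k)\bmod d$; inserting the affine forms $w_n=p_\rho n+q_\rho$ and $w_{n-m}=p_{\rho'}(n-m)+q_{\rho'}$ produces, for infinitely many $n$, an identity between two quadratics in $n$. Equating coefficients (and using $m\geq 1$) gives $p_\rho=p_{\rho'}$, $q_\rho=p_\rho$ and $q_{\rho'}=0$. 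Applying the same reasoning to the class $\rho'$ gives $q_{\rho'}=p_{\rho'}$, whence $p_{\rho'}=0$ and therefore $p_\rho=q_\rho=0$; thus $w_n=0$ for large $n\equiv\rho$, contradicting $w_n\neq 0$. Hence $T_{k+1}=0$ and the induction closes. I expect the main obstacle to be this last step: one must extract the vanishing of $w_n$ from the single relation $w_n(n-m)=w_{n-m}(n+1)$, and it is exactly the affine-on-residue-classes structure delivered by $T_2=0$ that converts this relation into an over-determined polynomial identity forcing the contradiction.
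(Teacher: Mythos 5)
Your proof is correct, but it takes a much longer road than the paper's, which disposes of the lemma in one line: by Lemma~\ref{Lem1}, $d$-symmetry already forces $R_{d+2}=R_{d+3}=\cdots=R_{2d+1}=0$, so the hypothesis $T_2=R_{2d+2}=0$ places the situation exactly under Corollary~\ref{cor2}, whence $R(t)=R_2t^2/2+\cdots+R_{d+1}t^{d+1}/(d+1)$, and symmetry kills every surviving coefficient except $R_{d+1}=T_1$. Your induction on $k$, with the collapse of $E_k(n)$ under $T_2=\cdots=T_k=0$, is in substance a re-proof of Corollary~\ref{cor2} specialized to the symmetric case: the same collapse of \eqref{gfd13} drives the paper's induction there. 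The genuine divergence is how you close each inductive step, and your closing, while valid, is far heavier than necessary: since \eqref{gfd1313} holds for all $n\geq m:=k(d+1)+d$, you may simply evaluate at $n=m$, where the prefactor $(n-m)/(n-m+1)$ vanishes, so the collapsed equation reads $T_{k+1}a_m=0$ and $a_m\neq 0$ gives $T_{k+1}=0$ with no case analysis at all — this is precisely the boundary-evaluation trick of Corollary~\ref{cor2} (there at $n=k$). Equivalently, your own scalar identity $w_n(n-m)=w_{n-m}(n+1)$, taken at $n=m$, already reads $0=(m+1)w_0\neq 0$, so the entire residue-class and quadratic-coefficient analysis is superfluous. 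One hidden hypothesis in your detour is worth flagging: \eqref{gf1102} is obtained by dividing by $T_1^2$, so your appeal to it tacitly assumes $T_1\neq 0$; this is in fact automatic for $d\geq 1$ (if $T_1=0$ then \eqref{cdn} gives $c_n^d=0$, hence $\gamma_n^d=0$ for all $n\geq d$, contradicting \eqref{dorth02}), but the evaluation at $n=m$ would have avoided the dependence on $T_1$ altogether.
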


\begin{proof}
According to Corollary \ref{cor2}, if $T_2=R_{2(d+1)}=0$ then $R(t)=T_{1}t^{d+1}/(d+1)$, since in this case we have $R_{d+2}=R_{d+3}=\cdots=R_{2d+1}=0$.
\end{proof}

\begin{lemma}\label{cor3}
If $T_{m}=T_{m+1}=0$ for some $m\geq 3$, then
$T_2=0$.
\end{lemma}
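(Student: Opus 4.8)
The plan is to exploit the convolution structure on the right-hand side of $E_k(n)$ (equation \eqref{gfd1313}) together with a downward induction on $m$, mirroring the residue argument already used in the proof of Corollary~\ref{cor03}.

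First I would record the basic mechanism: whenever two consecutive coefficients vanish, $T_k=T_{k+1}=0$, the entire left-hand side of $E_k(n)$ is identically zero, so that
\[
\sum_{l=0}^{k-1}\frac{T_{l+1}T_{k-l}}{n-l(d+1)-d+1}=0 \qquad \text{for all integers } n\geq k(d+1)+d .
\]
The left member here is a rational function of $n$ with constant coefficients $T_{l+1}T_{k-l}$ over the linear factors $n-l(d+1)-d+1$, whose poles $n=l(d+1)+d-1$, $0\leq l\leq k-1$, are pairwise distinct (they differ by multiples of $d+1\geq 2$). A rational function vanishing at infinitely many integers is identically zero, so each residue must vanish, which gives $T_{l+1}T_{k-l}=0$ for $0\leq l\leq k-1$.

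Applying this with $k=m$ (legitimate since $T_m=T_{m+1}=0$) yields $T_{l+1}T_{m-l}=0$ for $0\leq l\leq m-1$; in particular the term $l=1$ gives $T_2 T_{m-1}=0$. I would then argue by induction on $m\geq 3$. For the base case $m=3$ the term $l=1$ reads $T_2^2=0$, whence $T_2=0$. For the inductive step with $m\geq 4$, the relation $T_2 T_{m-1}=0$ leaves two possibilities: either $T_2=0$, and we are done, or $T_{m-1}=0$; in the latter case $T_{m-1}=T_m=0$ are again two consecutive vanishing coefficients, now with index $m-1\geq 3$, so the induction hypothesis applies and again forces $T_2=0$.

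The only genuinely delicate point is the residue extraction, i.e.\ justifying that the vanishing of the finite partial-fraction sum at infinitely many integers $n$ forces each coefficient $T_{l+1}T_{k-l}$ to be zero; this rests solely on the distinctness of the poles $l(d+1)+d-1$, which is immediate for $d\geq 1$. Everything else is the bookkeeping of the induction, and no use of the rationality of $a_n$ or $c_n^d$ is needed here, since the left-hand side of $E_k(n)$ is made to vanish outright.
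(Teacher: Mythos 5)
Your proof is correct, but it follows a genuinely different route from the paper's. The paper disposes of this lemma in two lines by reduction: $T_m=T_{m+1}=0$ means $R_{(d+1)m}=R_{(d+1)(m+1)}=0$, and since Lemma~\ref{Lem1} already forces $R_j=0$ for every $j$ not divisible by $d+1$, the whole window $R_{\kappa-d}=\cdots=R_{\kappa+d+1}=0$ is available with $\kappa=(d+1)m\geq 3d+3$, so Corollary~\ref{cor03} applies and gives $R_{2d+2}=T_2=0$. You instead argue directly from the symmetric equation \eqref{gfd1313}: with $k=m$ both left-hand terms carry the factors $T_{m+1}$ and $T_m$, so the partial-fraction sum $\sum_{l=0}^{m-1}T_{l+1}T_{m-l}/(n-l(d+1)-d+1)$ vanishes at infinitely many integers, hence identically, and the distinctness of the poles $n=l(d+1)+d-1$ forces each residue $T_{l+1}T_{m-l}$ to be zero; the term $l=1$ gives $T_2T_{m-1}=0$, and your downward induction (if $T_2\neq 0$ then $T_{m-1}=0$, recreating the hypothesis at $m-1\geq 3$, down to the base $m=3$ where $T_2^2=0$) closes the argument. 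The residue-extraction mechanism is exactly the one the paper employs inside the proof of Corollary~\ref{cor03}, so you are not introducing anything unjustified there; what your version buys is self-containedness, since it bypasses Corollary~\ref{cor03} and its cascading contradiction entirely by exploiting the fact that in the $d$-symmetric case only indices divisible by $d+1$ survive, which collapses the cascade into a clean two-case descent. What the paper's version buys is brevity: it reuses machinery already established in the general, non-symmetric setting rather than reproving a specialized instance of it.
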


\begin{proof}
$T_{m}=T_{m+1}=0$ means that $R_{(d+1)m}=R_{(d+1)(m+1)}=0$. Also by Lemma~\ref{Lem1}, we have $R_{(d+1)m+d}=R_{(d+1)m+d-1}=\cdots=R_{(d+1)m+1}=0$ and $R_{(d+1)m-1}=R_{(d+1)m-2}=\cdots=R_{(d+1)m-d}=0$ which represents the condition of corollary~\ref{cor03} with $\kappa=(d+1)m\geq 3(d+1)$ and therefore gives $R_{2d+2}=T_2=0$.
\end{proof}

\begin{lemma}\label{cor6}
If $T_{\kappa}=T_m=0$ for some $\kappa\neq m\geq 3$, then
$T_2=0$.
\end{lemma}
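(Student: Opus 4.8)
The plan is to manufacture two \emph{consecutive} vanishing coefficients among the $T_k$ and then quote Lemma~\ref{cor3}, or else to force $a_n$ to be a rational function of $n$ and quote Corollary~\ref{cor5}; either outcome gives $T_2=R_{2d+2}=0$. Assume without loss of generality that $\kappa<m$; since both indices are at least $3$ this forces $m\geq 4$, so both $m-1\geq 3$ and $\kappa+1\geq 4$ are admissible $T$-indices. The whole argument rests on the device already used in the proof of Corollary~\ref{cor03}: the right-hand side of the recurrence $E_k(n)$ (equation \eqref{gfd1313}) is the rational function $\sum_{l=0}^{k-1}T_{l+1}T_{k-l}/(n-l(d+1)-d+1)$, whose poles sit at the distinct points $n=l(d+1)+d-1$ with residues the products $T_{l+1}T_{k-l}$; hence whenever its left-hand side vanishes identically in $n$, the right-hand side is a rational function with infinitely many integer zeros, so it vanishes identically and every $T_{l+1}T_{k-l}$ is null.

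I would write the left-hand side of $E_k(n)$ as $T_{k+1}A_k(n)+T_kB_k(n)$, where $A_k$ is the bracket carrying $a_n$ and $B_k$ the bracket carrying $c_n^d$. Taking $k=m-1$ annihilates the first summand, since $T_{k+1}=T_m=0$, and leaves the identity $T_{m-1}B_{m-1}(n)=\sum_l T_{l+1}T_{m-1-l}/(n-l(d+1)-d+1)$; here two residues already vanish because $T_\kappa=0$ (the summands with $l=\kappa-1$ and $l=m-1-\kappa$). Symmetrically, taking $k=\kappa$ kills the second summand, since $T_k=T_\kappa=0$, and leaves $T_{\kappa+1}A_\kappa(n)$ equal to a rational function of $n$. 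This produces a clean dichotomy. If $T_{m-1}=0$ (or $T_{\kappa+1}=0$) we have a consecutive pair and Lemma~\ref{cor3} yields $T_2=0$ at once. Otherwise $T_{m-1}\neq 0$ and $T_{\kappa+1}\neq 0$, so both $B_{m-1}(n)$ and $A_\kappa(n)$ are prescribed rational functions of $n$.

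In this remaining case the two rational constraints must be fed into the closed form \eqref{cdn} of $c_n^d$, which expresses $c_n^d$ through the ratios $a_n/a_{n-d}$. Unwinding $B_{m-1}$ and $A_\kappa$ through \eqref{cdn} turns ``$B_{m-1}$ and $A_\kappa$ are rational'' into a pair of linear difference equations for $a_n$ with rational inhomogeneities and shifts governed by $\kappa(d+1)+d$ and $(m-1)(d+1)$ respectively. Having \emph{two} such equations, coming from the two distinct zeros $T_\kappa$ and $T_m$, is exactly what lets one eliminate the homogeneous (non-rational) part and conclude that $a_n$ itself is a rational function of $n$; relation \eqref{gf1102} can be used in tandem to tie the two shifts together. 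Once $a_n$ is rational, Corollary~\ref{cor5} gives $R_{d+2}=\cdots=R_{2d+2}=0$, in particular $T_2=R_{2d+2}=0$, which completes the proof.

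The main obstacle is precisely this last deduction. Passing from ``$B_{m-1}(n)$ and $A_\kappa(n)$ are rational in $n$'' to ``$a_n$ is rational in $n$'' is delicate, because a difference equation with rational data admits a rational \emph{particular} solution only modulo its homogeneous solutions, and one must verify that the second hypothesis $T_\kappa=0$ genuinely removes that homogeneous freedom rather than leaving a one-parameter ambiguity. The accompanying residue bookkeeping — checking that after using $T_\kappa=0$ the surviving products in $E_{m-1}(n)$ really do isolate the information needed — has to be carried out according to the size of the gap $m-\kappa$, but it is routine once the rationality of $a_n$ is secured.
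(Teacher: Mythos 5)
Your opening reduction is sound and matches the paper's first move: taking $k=\kappa$ in \eqref{gfd1313} does kill the $c^d$-bracket (its coefficient is $T_k=T_\kappa=0$) and leaves $T_{\kappa+1}A_\kappa(n)$ equal to an explicitly rational function of $n$, and disposing of the degenerate subcases via Lemma~\ref{cor3} is legitimate (with $\kappa<m$ and $\kappa\geq 3$ the consecutive pairs you invoke are admissible). The gap is in the main case, and it is exactly the one you concede in your last paragraph. Your second equation, taken at $k=m-1$ so that $T_{k+1}=T_m=0$ kills the $a$-bracket, yields rationality of $B_{m-1}(n)$, which is a constraint on $c_n^d$, not on $a_n$. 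By \eqref{cdn}, $c_n^d$ is an affine function of the ratio $u_n=a_n/a_{n-d}$, so ``$B_{m-1}$ rational'' is a linear difference constraint on $u_n$, i.e.\ a \emph{nonlinear} constraint on $a_n$ --- not one of the ``pair of linear difference equations for $a_n$'' your sketch asserts. One linear relation in $a_n$ (from $A_\kappa$) together with one linear relation in $u_n$ (from $B_{m-1}$) admits no routine elimination of the homogeneous freedom, and you explicitly state that you have not verified that this freedom is removed. Since that deduction is the entire substance of the lemma, the proposal as written is incomplete: in the main case $T_{\kappa+1}T_{m-1}\neq 0$ it concludes nothing.

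The repair stays inside your own framework but picks the other index: evaluate \eqref{gfd1313} at $k=m$ rather than $k=m-1$. Then $T_m=0$ kills the $c^d$-bracket of $E_m$ exactly as $T_\kappa=0$ kills that of $E_\kappa$, and after assuming $T_{\kappa+1}\neq 0$ and $T_{m+1}\neq 0$ (otherwise Lemma~\ref{cor3} applies to the consecutive pair $(\kappa,\kappa+1)$ or $(m,m+1)$) you obtain \emph{two} relations of the same purely linear form
\begin{equation*}
a_n-\frac{n-k(d+1)-d}{n-k(d+1)-d+1}\,a_{n-k(d+1)-d}=\text{(explicit rational function of $n$)},\qquad k\in\{\kappa,m\},
\end{equation*}
with the two distinct shifts $\kappa(d+1)+d$ and $m(d+1)+d$. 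This is precisely the paper's proof: a four-term combination of shifted copies of $E_\kappa(\cdot)/T_{\kappa+1}$ and $E_m(\cdot)/T_{m+1}$ cancels every shifted $a$-term and leaves $\left(\frac{n}{n+1}-\frac{n+(d+1)(m-\kappa)}{n+(d+1)(m-\kappa)+1}\right)a_n=Q(n)$ with $Q$ rational and the coefficient on the left a nonzero rational function, whence $a_n$ is rational and Corollary~\ref{cor5} gives $T_2=R_{2d+2}=0$. Two smaller remarks: no pole/residue device is needed here, since the right-hand side of \eqref{gfd1313} is already manifestly rational in $n$ (that argument belongs to Corollary~\ref{cor03}, where the left-hand side is known to vanish identically); and your observation that two residues of the $k=m-1$ equation vanish because of $T_\kappa=0$, while correct, plays no role in closing the argument.
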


\begin{proof}
The proof is similar to that of Corollary~7  in \cite{meskzahaf}. Let assume that $T_{\kappa+1}\neq 0$ and $T_{m+1}\neq 0$, since if not, we apply Corollary~\ref{cor3}. When $m> \kappa$ and by using \eqref{gfd1313}, the following operations
\begin{eqnarray}
&&\left[E_{\kappa}(n+(d+1)m+d)/T_{\kappa+1}-E_{m}(n+(d+1)m+d)/T_{m+1}\right]\nonumber\\
&&-\left[E_{\kappa}(n)/T_{\kappa+1}-E_{m}(n+(d+1)(m-\kappa)+d)/T_{m+1}\right]
\end{eqnarray}   
give
\begin{equation}
\left(\frac{n}{n+1}-\frac{n+(d+1)(m-\kappa)}{n+(d+1)(m-\kappa)+1}\right)a_n=Q(n)
\end{equation}
where $Q(n)$ is a rational function of $n$. Consequently, $a_n$ is a rational function of $n$ and by Corollary~\ref{cor5} we have $T_2=0$.
\end{proof}

\begin{lemma}\label{cor7}
The following equality is true for $k\geq 3\hbox{ and } n\geq k(d+1)+2d+1.$
\begin{eqnarray}\label{gfd23}
T_{k-1}D_{k+1}(a_n-\tilde{
a}_{n-k(d+1)-2d-1})-T_{k+1}D_{k}(a_{n-d-1}-\tilde{
a}_{n-k(d+1)-d})=\sum_{l=1}^{k-1}\frac{V_{k,l}}{n-l(d+1)-d+1},
\end{eqnarray}

where
\begin{itemize}
\item $D_{k,l}=T_kT_{k-l+1}-T_{k+1}T_{k-l}$.
\item $D_{k}=D_{k,1}=T_k^2-T_{k+1}T_{k-1}$.
\item
$V_{k,l}=\frac{T_1}{2}\left(T_lT_{k+1}D_{k-1,l-1}-T_{l+1}T_{k-1}D_{k,l}\right)$.
\item $\tilde{ a}_n=\frac{n}{n+1}a_n$.
\end{itemize}
\end{lemma}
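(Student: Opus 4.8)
The plan is to treat the relations \eqref{gfd1313}, which I abbreviate by $E_k(n)$, as linear equations whose only non-$a$ unknowns are two members of the sequence $\{c_n^d\}$, and to remove these members by combining shifted copies of \eqref{gfd1313}. It helps to record $E_j(m)$ schematically as $T_{j+1}A_j(m)+T_j\bigl(g(m)-h(m-j(d+1))\bigr)=S_j(m)$, where $A_j(m)=a_m-\tilde a_{m-j(d+1)-d}$ is the single $a$-difference carried by $E_j(m)$, where $g(m)=\frac{m+2}{m-d+1}c_m^d$ and $h(p)=\frac{p+1}{p+2}c_{p+1}^d$ package the two $c^d$-values $c_m^d$ and $c_{m-j(d+1)+1}^d$, and where $S_j(m)=\sum_{l=0}^{j-1}\frac{T_{l+1}T_{j-l}}{m-l(d+1)-d+1}$ collects the simple fractions. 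The goal is a linear combination of such equations in which every $c^d$ cancels and only $A_{k+1}(n)$ and $A_{k-1}(n-d-1)$ survive on the left.

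The mechanism I would exploit is that neighbouring relations share exactly one $c^d$-value. On the one hand $E_k(n)$ and $E_{k+1}(n)$ both carry $c_n^d$ through $g(n)$, so $T_kE_{k+1}(n)-T_{k+1}E_k(n)$ annihilates it, leaving the differences $A_{k+1}(n)$, $A_k(n)$ and the two residual values $c_{n-k(d+1)+1}^d$, $c_{n-(k+1)(d+1)+1}^d$. On the other hand, because $(n-d-1)-(k-1)(d+1)=n-k(d+1)$, the relations $E_k(n)$ and $E_{k-1}(n-d-1)$ both carry $c_{n-k(d+1)+1}^d$, so $T_{k-1}E_k(n)-T_kE_{k-1}(n-d-1)$ annihilates that one, leaving $A_k(n)$, $A_{k-1}(n-d-1)$ and the residual values $c_n^d$, $c_{n-d-1}^d$. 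These two elementary moves are the building blocks: the shared intermediate difference $A_k(n)$ is what must be eliminated between them, which is exactly why the surviving endpoints are $A_{k+1}(n)=a_n-\tilde a_{n-k(d+1)-2d-1}$ and $A_{k-1}(n-d-1)=a_{n-d-1}-\tilde a_{n-k(d+1)-d}$, the two $a$-differences appearing in \eqref{gfd23}.

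To clear the residual $c^d$-values and produce the right-hand side of \eqref{gfd23} I would bring in the explicit relation \eqref{cdn}, $c_n^d=\frac{T_1}{d+1}\bigl((n-d+1)\frac{a_n}{a_{n-d}}-(n-d)\bigr)$, together with its companion Riccati identity \eqref{gfd123}. Each residual $c_m^d$ then splits into an $a$-ratio part proportional to $\frac{a_m}{a_{m-d}}$ and a purely rational part $-\frac{T_1}{d+1}(m-d)$; the weights of the combination are tuned so that the $a$-ratio parts cancel along the chain while the rational parts accumulate. This is the origin of the overall factor $T_1$ in $V_{k,l}=\frac{T_1}{2}\bigl(T_lT_{k+1}D_{k-1,l-1}-T_{l+1}T_{k-1}D_{k,l}\bigr)$, of the second-order (Turán-type) weights $D_{k+1}=T_{k+1}^2-T_kT_{k+2}$ and $D_k=T_k^2-T_{k-1}T_{k+1}$ that dress the two endpoints, and of the quantities $D_{k,l}=T_kT_{k-l+1}-T_{k+1}T_{k-l}$, which appear as the weights with which two neighbouring source fractions merge at a common pole $n-l(d+1)-d+1$.

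The main obstacle is the joint bookkeeping of two cancellations performed at once: that every intermediate $c_\bullet^d$ (equivalently, after \eqref{cdn}, every ratio $\frac{a_\bullet}{a_{\bullet-d}}$) disappears, which is precisely what forces the weights $T_{k\pm1}$, $D_k$, $D_{k+1}$, $D_{k,l}$; and that the rational remainders assemble exactly into $\sum_{l=1}^{k-1}\frac{V_{k,l}}{n-l(d+1)-d+1}$, with all contributions at pole locations outside the range $1\le l\le k-1$ cancelling. I would organise this by indexing every contribution by its pole and verifying the balance one pole at a time. To avoid having to guess the full chain in one stroke, the cleanest route is probably induction on $k$: establish the identity for the first admissible $k$ by the explicit elimination above, and then pass from $k$ to $k+1$ by adjoining suitable shifted copies of \eqref{gfd1313} and checking the elementary recurrences that carry $D_{k,l}$, $V_{k,l}$ to $D_{k+1,l}$, $V_{k+1,l}$.
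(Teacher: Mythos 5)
Your structural reading of \eqref{gfd1313} is largely on target---the paper's proof is indeed a pure linear elimination among shifted copies of $E_k$, and your second elementary move $T_{k-1}E_k(n)-T_kE_{k-1}(n-d-1)$ is literally one of the two brackets the paper uses. But your completion has a genuine gap, and it enters exactly where you decide that ``the shared intermediate difference $A_k(n)$ is what must be eliminated.'' It must not be. The paper's combination is the four-term square
\begin{equation*}
T_{k+1}\left(T_{k-1}E_k(n)-T_{k}E_{k-1}(n-d-1)\right)-T_{k-1}\left(T_kE_{k+1}(n)-T_{k+1}E_k(n-d-1)\right),
\end{equation*}
in which $E_k(n)$ and $E_k(n-d-1)$ both carry the weight $T_{k-1}T_{k+1}$, so that $A_k(n)$ and $A_k(n-d-1)$ survive and regroup \emph{crosswise}: $A_k(n)+A_k(n-d-1)=(a_n-\tilde a_{n-k(d+1)-2d-1})+(a_{n-d-1}-\tilde a_{n-k(d+1)-d})$, which is precisely how the two endpoint differences of \eqref{gfd23} acquire the Tur\'an-type weights $T_{k-1}T_{k+1}^2-T_{k-1}T_kT_{k+2}=T_{k-1}D_{k+1}$ and $T_{k-1}T_{k+1}^2-T_{k+1}T_k^2=-T_{k+1}D_k$. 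Crucially, in this four-term combination \emph{every} $c^d$-value cancels identically, with no use of \eqref{cdn} or \eqref{gfd123}: all four weight products equal $\pm T_{k-1}T_kT_{k+1}$, the shift $n\to n-d-1$ aligns the arguments (since $(n-d-1)-(k-1)(d+1)=n-k(d+1)$ and $(n-d-1)-k(d+1)=n-(k+1)(d+1)$), each bracket annihilates one lower $c^d$, and the leftover $g(n)-g(n-d-1)$ blocks cancel between the two brackets. The right-hand side of \eqref{gfd23} is then just the same combination of the source sums, where the extreme poles $l=0$ and $l=k$ drop out by the factor $T_1T_k$ they share.

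By contrast, your proposed route fails concretely. If you eliminate $A_k(n)$ between your two moves, the resulting three-equation combination carries the residual $c^d$-pattern $g(n)-g(n-d-1)+\tilde h(n-k(d+1))-\tilde h(n-(k+1)(d+1))$ (in your notation), and one checks that \emph{no} further linear combination of $E_k(n)$ and $E_k(n-d-1)$ can cancel it, because any such combination produces $g$- and $\tilde h$-terms with the opposite relative sign pattern; the only way out is to abandon the elimination of $A_k(n)$, which is the idea you are missing. Your nonlinear fallback does not repair this: substituting \eqref{cdn} turns each residual $c_m^d$ into a term containing the ratio $a_m/a_{m-d}$ at four distinct values of $m$ (namely $n$, $n-d-1$, $n-k(d+1)+1$, $n-(k+1)(d+1)+1$) with nonzero weights, and since the target identity \eqref{gfd23} is linear in the $a_n$, any surviving ratio is fatal; moreover the Riccati identity \eqref{gfd123} couples $c_n^d$ to $c_{n-d}^d$ (a gap of $d$) and is quadratic, while your residuals sit at gaps $d+1$ and multiples of $d+1$ and enter linearly, so it simply does not apply. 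Relatedly, your attribution of the factor $T_1$ in $V_{k,l}$ to \eqref{cdn} is a misreading: it comes from merging the simple fractions of the source sums $S_j$, whose end terms carry $T_1T_j$. The unproved induction on $k$ you sketch at the end would have to rediscover the fourth equation anyway, so as written the proposal does not close.
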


\begin{proof}
Just by making  the following combinations it is  easy to get \eqref{gfd23}:
\begin{equation*}
T_{k+1}\left(T_{k-1}E_k(n)-T_{k}E_{k-1}(n-d-1)\right)-T_{k-1}\left(T_kE_{k+1}(n)-T_{k+1}E_k(n-d-1)\right).
\end{equation*}
\end{proof}
 To prove Theorem~\ref{Th5} it is sufficient, according to Lemma~\ref{Lem3}, to show that $T_2=0$. To this end, we will consider three cases:\\

{\bf\textit { Case 1:}} There exists $k_0\geq 3$ such that $D_k\neq 0$ for $k\geq k_0$.\\

Considering Lemma~\ref{cor6}, we can choose $\tilde{k}\geq k_0$ such that $T_{k}\neq 0$ for $k\geq \tilde{k}-1$. Let define, for $k\geq \tilde{k}$, $\bar{D}_{k}=\frac{D_{k}}{T_{k-1}T_{k}}$ and $\bar {E}_k(n)$ be the equation \eqref{gfd23} divided by $T_{k-1}T_{k}T_{k+1}$. By making the operations
\begin{equation*}
\left[\bar{D}_{k-1}\bar{E}_k(n+d+1)-\bar{D}_k\bar{E}_{k-1}(n)\right]+\left[\bar{D}_{k+1}\bar{E}_{k-1}(n-d-1)-\bar{D}_{k}\bar{E}_k(n)\right]
\end{equation*}
we get, for $k\geq \tilde{k}+1$, the equation
\begin{equation}\label{Dk101}
{a}_{n+d+1}-{a}_{n-2(d+1)}-\tilde{D}_k({a}_n-{a}_{n-d-1})=\sum_{l=1}^{k}\frac{{W}_{k,l}}{n-(d+1)l+2}:=Q_k^{(1)}(n),
\end{equation}
where $W_{k,l}$ is independent of $n$ and

$$\tilde{D}_k=\frac{\bar{D}_{k}^2+\bar{D}_{k}\bar{D}_{k-1}+\bar{D}_{k}\bar{D}_{k+1}}{\bar{D}_{k-1}\bar{D}_{k+1}}.$$ 
Similarly, by  the operations
\begin{equation}\label{Dk2.1}
\left[\bar{D}_{k}\bar{E}(k,n+d+1)-\bar{D}_{k+1}\bar{E}(k-1,n+d+1)\right]+\left[\bar{D}_{k}\bar{E}(k-1,n)-\bar{D}_{k-1}\bar{E}(k,n)\right]
\end{equation}
and the shift $n\rightarrow n+(d+1)k-1$ in \eqref{Dk2.1} we
obtain
\begin{equation}\label{Dk3}
\tilde{a}_{n+d+1}-\tilde{a}_{n-2(d+1)}-\tilde{D}_k(\tilde{a}_n-\tilde{a}_{n-d-1})=\sum_{l=1}^{k}\frac{\widetilde{W}_{k,l}}{n+(d+1)l-d}:=\widetilde{Q}_k^{(1)}(n),
\end{equation}
where $\widetilde{W}_{k,l}$ is independent of $n$.
Now, for $k\neq \kappa\geq \tilde{k}+1$, the equations
\eqref{Dk101} and \eqref{Dk3} give, respectively,
\begin{equation}\label{Dk2}
(\tilde{D}_{\kappa}-\tilde{D}_{k})(a_n-a_{n-d-1})=Q_k^{(1)}(n)-Q_{\kappa}^{(1)}(n)
\end{equation}
and 
\begin{equation}\label{Dk4}
(\tilde{D}_{\kappa}-\tilde{D}_k)\left(\frac{n}{n+1}a_n-\frac{n-d-1}{n-d}a_{n-d-1}\right)=\widetilde{Q}_k^{(1)}(n)-\widetilde{Q}_{\kappa}^{(1)}(n).
\end{equation}

If $\tilde{D}_k\neq \tilde{D}_{\kappa}$ for some $k\neq \kappa\geq
\tilde{k}+1$, then by \eqref{Dk2} and \eqref{Dk4} we can
eliminate $a_{n-d-1}$ to get that $a_n$ is a rational function of
$n$. So, by Corollary~\ref{cor5}, we have $T_2=0$.

If $\tilde{D}_k= D$ for $k\geq \tilde{k}+1$, then
\eqref{Dk101} and \eqref{Dk3} become, respectively,
\begin{equation}\label{Dk6}
a_{n+d+1}-a_{n-2(d+1)}-D(a_n-a_{n-d-1})=Q_k^{(1)}(n)
\end{equation}
and
\begin{equation}\label{Dk66}
\frac{n+d+1}{n+d+2}a_{n+d+1}-\frac{n-2d-2}{n-2d-1}a_{n-2(d+1)}-D\left(\frac{n}{n+1}a_n-\frac{n-d-1}{n-d}a_{n-d-1}\right)=\widetilde{Q}_k^{(1)}(n).
\end{equation}
The combinations
$\left((n+d+2)Eq\eqref{Dk66}-(n+d+1)Eq\eqref{Dk6}\right)/(d+1)$ and
$((n-2d-1)Eq\eqref{Dk66}-(n-2d-2)Eq\eqref{Dk6})/(d+1)$ give, respectively,
\begin{equation}\label{Dk8}
\frac{3a_{n-2d-2}}{n-2d-1}-D\left(-\frac{a_n}{n+1}+\frac{2a_{n-d-1}}{n-d}\right)=Q_k^{(3)}(n)
\end{equation}
and
\begin{equation}\label{Dk9}
\frac{3a_{n+d+1}}{n+d+2}-D\left(\frac{2a_n}{n+1}-\frac{a_{n-d-1}}{n-d}\right)=Q_k^{(4)}(n).
\end{equation}
By shifting $n\rightarrow n+d+1$ in \eqref{Dk8} we obtain
\begin{equation}\label{Dk10}
\frac{3a_{n-d-1}}{n-d}-D\left(-\frac{a_{n+d+1}}{n+d+2}+\frac{2a_{n}}{n+1}\right)=Q_k^{(3)}(n+d+1).
\end{equation}
The coefficients $a_{n+d+1}$ and $a_{n-d-1}$ can be eliminated by the operations 
$D\times Eq\eqref{Dk9}-3\,Eq\eqref{Dk10}$ and $3\,Eq\eqref{Dk9}-D\times Eq\eqref{Dk10}$ leaving us with
\begin{equation}\label{Dk11}
\frac{6D-2D^2}{n+1}a_n+\frac{D^2-9}{n-d}a_{n-d-1}=Q_k^{(5)}(n)
\end{equation} 
and 
\begin{equation}\label{Dk12}
\frac{9-D^2}{n+d+2}a_{n+d+1}-\frac{6D-2D^2}{n+1}a_{n}=Q_k^{(6)}(n).
\end{equation} 

Finally, the shifting $n\rightarrow n-d-1$ in \eqref{Dk12} leads to
\begin{equation}\label{Dk13}
\frac{9-D^2}{n+1}a_{n}-\frac{6D-2D^2}{n-d}a_{n-d-1}=Q_k^{(6)}(n-d-1)
\end{equation}
and the operation $(6D-2D^2)Eq\eqref{Dk11}+(D^2-9)Eq\eqref{Dk13}$ gives
\begin{equation}\label{Dk14}
[(6D-2D^2)^2+(D^2-9)^2]a_n=Q_k^{(7)}(n).
\end{equation}

According to manipulations made above, $Q_k^{(7)}(n)$ is a
rational function of $n$. As consequence, if $D\neq 3$, $a_n$ is a rational function of $n$ and then $T_2=0$.

Now, we explore the case $D=3$. According to the left-hand sides of \eqref{Dk9} and \eqref{Dk10}, we have%
\begin{equation*}
Q_{k}^{(3)}\left( n+d+1\right) =Q_{k}^{(4)}\left( n\right),
\end{equation*}%
which can be written  as  
\begin{equation}\label{Eq77}
(n+2d+2)Q_{k}^{(1)}\left( n+d+1\right)-(n-2d-2)Q_{k}^{(1)}\left(
n\right) =\left( n+2d+3\right)
\widetilde{Q}_{k}^{(1)}\left( n+d+1\right)-\left( n-2d-1\right)
\widetilde{Q}_{k}^{(1)}\left( n\right).
\end{equation}%
By using, from \eqref{Dk101} and \eqref{Dk3}, the expressions of $Q_{k}^{(1)}(n)$ and $\widetilde{Q}_{k}^{(1)}(n)$ with $W_{k,k+1}=\widetilde{W}_{k,k+1}=W_{k,0}=\widetilde{W}_{k,0}=0$ we obtain

\begin{eqnarray}\label{Eq7a}
\sum_{l=0}^{k}\frac{((d+1)l+2d)W_{k,l+1}-((d+1)l-2d-4)W_{k,l}}{n-(d+1)l+2}=\sum_{l=0}^{k}\frac{(d+1)((2-l)\widetilde{W}_{k,l}-(l+2)\widetilde{W}_{k,l+1})}{n+(d+1)l+1}.
\end{eqnarray}
Observe that in \eqref{Eq7a} the singularities of the left hand side are different from those of the right hand side. So,
\begin{eqnarray}\label{Eq7aa}
((d+1)l+2d)W_{k,l+1}-((d+1)l-2d-4)W_{k,l}=(2-l)\widetilde{W}_{k,l}-(l+2)\widetilde{W}_{k,l+1}=0,\;(0\leq l\leq k),
\end{eqnarray}
and by induction on $l$, all the $W_{k,l}$ and $\widetilde{W}_{k,l}$ are null. Thus, \eqref{Dk101} reads

\begin{equation}\label{Dk6a}
a_{n+d+1}-a_{n-2(d+1)}-3(a_n-a_{n-d-1})=0.
\end{equation}
For $n=(d+1)m+r$, $m\geq 0$ and $0\leq r\leq d$, the solutions of \eqref{Dk6a} have the form
\begin{equation}\label{Dk6aa}
a_{(d+1)m+r}=C_{0,r}+C_{1,r}m+C_{2,r}m^2,
\end{equation}
where $C_{0,r}, C_{1,r}$ and $C_{2,r}$ are constants. So, by Corollary~\ref{cor5} we get $T_2=0$.\\ 

{\bf\textit { Case 2:}} There exists $k_{0}\geq 3$ such that $D_{k}=0$ for $k\geq k_{0}$.\\

Suppose that $D_k= T_k^2-T_{k-1}T_{k+1}=0$ for all $k\geq k_0$.
First, notice that if there exists a $k_1\geq k_0$ such that $T_{k_1}=0$, then $T_{k_1-1}T_{k_1+1}=0$. Then, $T_{k_1-1}=0$ or $T_{k_1+1}=0$ and by Corollary~\ref{cor3}, $T_2=0$. We have also $T_{k_0-1}\neq 0$, otherwise $T_{k_0}=0$ and  by Corollary~\ref{cor3}, $T_2=0$.

Now, for $T_k\neq 0$ $(k\geq k_0-1)$, we have 
\begin{equation}
\frac{T_{k+1}}{T_{k}}=\frac{T_{k}}{T_{k-1}}=\frac{T_{k_{0}}}{T_{k_{0}-1}}.
\label{Eq0.4}
\end{equation}%
This means that 
\begin{equation}\label{Eq1.4}
T_{k}=\left( \frac{T_{k_{0}}}{T_{k_{0}-1}}\right)^{k-k_{0}}T_{k_{0}}=ab^{k}
\end{equation}%
where $a=T_{k_{0}-1}^{k_{0}}/T_{k_{0}}^{k_{0}-1}\neq 0$ and $b=T_{k_{0}}/T_{k_{0}-1}\neq 0$.\\
The substitution  $T_{k}=ab^{k}$ in \eqref{gfd1313} for $k\geq k_{0}$\ leads to the equation%
\begin{eqnarray}
&& b\left(a_n-\frac{n-k(d+1)-d}{n-k(d+1)-d+1}a_{n-k(d+1)-d}\right)
+\frac{n+2}{n-d+1} \,c_n^d-\frac{n-k(d+1)+1}{n-k(d+1)+2}\,c_{n-k(d+1)+1}^{d}\nonumber\\
&&=\frac{b^{-k}}{a}\sum_{l=0}^{k-1}\frac{T_{l+1}T_{k-l}}{n-l(d+1)-d+1}=Q_{k}\left( n\right) .  \label{Eq4}
\end{eqnarray}
Let denote (\ref{Eq4}) by $\widetilde{E}\left( k,n\right)$ and make the 
subtraction $\widetilde{E}\left( k+1,n+d+1\right) -\widetilde{E}\left( k,n\right)$ to get
\begin{equation}
b\left( a_{n+d+1}-a_{n}\right)
+\frac{n+d+3}{n+2}c_{n+d+1}^d-\frac{n+2}{n-d+1} \,c_n^d=Q_{k+1}\left(n+d+1\right) -Q_{k}\left( n\right) .
\label{Eq11}
\end{equation}%
On the right hand side of  (\ref{Eq11})  we have, for $k\geq k_{0}$, the expression

\begin{eqnarray}
\widetilde{Q}_{k}\left( n\right) &:=&Q_{k+1}\left(n+d+1\right) -Q_{k}\left( n\right)
=\frac{b^{-k-1}}{a}\sum_{l=0}^{k}\frac{T_{l+1}T_{k+1-l}}{n+d+1-l(d+1)-d+1}-\frac{b^{-k}}{a}\sum_{l=0}^{k-1}\frac{T_{l+1}T_{k-l}}{n-l(d+1)-d+1}\nonumber\\
&=&\frac{b^{-k-1}}{a}\sum_{l=0}^{k}\frac{T_{l+1}T_{k+1-l}}{n-l(d+1)+2}-\frac{b^{-k}}{a}\sum_{l=1}^{k}\frac{T_{l}T_{k+1-l}}{n-l(d+1)+2}\nonumber\\
&=&\frac{b^{-k-1}}{a}\frac{T_{k+1}T_{1}}{n+2}+\frac{b^{-k-1}}{a}\frac{%
T_{k}\left( T_{2}-bT_{1}\right) }{n-d+1}+\frac{b^{-k-1}}{a}\sum_{l=2}^{k}\frac{T_{k+1-l}\left(T_{l+1} -bT_{l}\right)}{n-l(d+1)+2}\nonumber\\
&=&\frac{T_{1}}{n+2}+\frac{T_{2}-bT_{1}}{b(n-d+1)}+\frac{b^{-k-1}}{a}\sum_{l=2}^{k}\frac{T_{k+1-l}\left(T_{l+1} -bT_{l}\right)}{n-l(d+1)+2}.
\end{eqnarray}
from which we deduce
\begin{eqnarray}
\widetilde{Q}_{k+1}\left( n\right)
&=&\frac{T_{1}}{n+2}+\frac{T_{2}-bT_{1}}{b(n-d+1)}+\frac{b^{-k-2}}{a}\sum_{l=2}^{k+1}\frac{T_{k+2-l}\left(T_{l+1} -bT_{l}\right)}{n-l(d+1)+2}
.
\end{eqnarray}
Now  since the left hand side of equation  (\ref{Eq11}) is independent of $k$, it follows
\begin{equation}
\widetilde{Q}_{k+1}\left( n\right) -\widetilde{Q}_{k}\left(n\right) =\frac{b^{-k-2}}{a}\sum_{l=2}^{k}\frac{\left( T_{k-l+2}-bT_{k-l+1}\right)\left( T_{l+1}-bT_{l}\right) }{n-l(d+1)+2}=0.
\end{equation}
As a result, for $2\leq l\leq k$ and $k\geq k_{0}$, we have
\begin{equation}
\left( T_{k-l+2}-bT_{k-l+1}\right)\left( T_{l+1}-bT_{l}\right) 
=0.
\end{equation}
Let take $k=2\left( k_{0}-2\right) -1$ and $l=k_{0}-2$\ to get
$\left(
T_{k_{0}-1}-bT_{k_{0}-2}\right) ^{2}=0$ and then
$T_{k_{0}-1}=bT_{k_{0}-2},$ (or equivalently $D_{k_0-1}=0$).
Thus, the equations \eqref{Eq0.4} and \eqref{Eq1.4} are valid for $k=k_{0}-1$ and by induction we arrive at $T_{4}=bT_{3},$ (or equivalently $D_{4}=0$). For  $k=4$,  the right-hand side of \eqref{gfd23} is null. Consequently, $V_{4,2}=0$ and using $T_5=T_4^2/T_3$ (from $D_{4}=0$) we get $D_3=0$. On the other side (when $T_2\neq 0$) we can write
$$T_{k}=\left(\frac{T_3}{T_2}\right)^{k-2}T_{2}=ab^k,\;\;\text{for } k\geq 2,$$
where $b=T_3/T_2\neq 0$ and $a=T_2^3/T_3^2\neq 0$. Therefore, the equation \eqref{Eq4} reads
\begin{eqnarray}
&& b\left(a_n-\frac{n-k(d+1)-d}{n-k(d+1)-d+1}a_{n-k(d+1)-d}\right)
+\frac{n+2}{n-d+1} \,c_n^d-\frac{n-k(d+1)+1}{n-k(d+1)+2}\,c_{n-k(d+1)+1}^{d}\nonumber\\
&&=\frac{T_1}{n-(d+1)k+2}+\frac{T_1}{n}+\sum_{l=1}^{k-2}\frac{ab}{n-l(d+1)-d+1},\;\;  k\geq 2 \hbox{ and } n\geq (d+1)k+d.\label{gf1222}
\end{eqnarray}
When $n=(d+1)k+d$ and $n=(d+1)(k+1)$, the equation \eqref{gf1222} gives
\begin{equation}\label{gf12222}
ba_{(d+1)k+d}+\frac{(d+1)k+d+2}{(d+1)k+1}c_{(d+1)k+d}^d=\frac{d+1}{d+2}c_{d+1}^d+\frac{T_1}{d+2}+\frac{T_1}{(d+1)k+d}+\sum_{l=1}^{k-2}\frac{ab}{(d+1)(k-l)+1}
\end{equation}
and
\begin{eqnarray}
ba_{(d+1)(k+1)}+\frac{(d+1)(k+1)+2}{(d+1)(k+1)-d+1}c_{(d+1)(k+1)}^d=\frac{a_1b}{d+1}&+&\frac{d+2}{d+3}c_{d+2}^d+\frac{T_1}{d+3}+\frac{T_1}{(d+1)(k+1)}\nonumber\\&+&\sum_{l=1}^{k-2}\frac{ab}{(d+1)(k+1-l)-d+1}\label{gf12223}
\end{eqnarray}
respectively. Let take $n=(d+1)N+d$ in \eqref{gf1222} and use \eqref{gf12222} to obtain the expression
\begin{eqnarray}
&&\frac{d+1}{d+2}c_{d+1}^d+\frac{T_1}{d+2}+\frac{T_1}{(d+1)N+d}+\sum_{l=1}^{N-2}\frac{ab}{(d+1)(N-l)+1}\nonumber\\
&&-\frac{(d+1)(N-k)b}{(d+1)(N-k)+1}a_{(d+1)(N-k)}-\frac{(d+1)(N-k+1)}{(d+1)(N-k)+d+2}c_{(d+1)(N-k+1)}^d\nonumber\\
&&=\frac{T_1}{(d+1)(N-k)+d+2}+\frac{T_1}{(d+1)N+d}+\sum_{l=1}^{k-2}\frac{ab}{(d+1)(N-l)+1}.\nonumber
\end{eqnarray}
In this last equality let put   $N-k$  instead of $k$ to get
\begin{eqnarray}
&&-\frac{b(d+1)k}{(d+1)k+1}a_{(d+1)k}-\frac{(d+1)(k+1)}{(d+1)(k+1)+1}c_{(d+1)(k+1)}^d=\nonumber\\&=&-\frac{(d+1)}{d+2}c_{d+1}^d-\frac{T_1}{d+2}-\sum_{l=1}^{N-2}\frac{ab}{(d+1)(N-l)+1}+\frac{T_1}{(d+1)k+d+2}+\sum_{l=1}^{N-k-2}\frac{ab}{(d+1)(N-l)+1}\nonumber\\
&=&-\frac{d+1}{d+2}c_ {d+1}^d-\frac{T_1}{d+2}+\frac{T_1}{(d+1)k+d+2}-\sum_{l=2}^{k+1}\frac{ab}{(d+1)l+1}.\label{gf12224}
\end{eqnarray}
After defining
$A_1=\frac{a_1}{d+1}+\frac{d+2}{(d+3)b}c_{d+2}^d+\frac{T_1}{(d+3)b}$
, $A_2=-\frac{d+1}{d+2}\frac{c_ {d+1}^d}{b}-\frac{T_1}{(d+2)b}$ and $A_3=\frac{T_1}{b}$, the operation $$\frac{1}{(d+1)(k+1)+2}\left(\frac{(d+1)(k+1)}{(d+1)(k+1)+1}\,Eq\eqref{gf12223}+\frac{(d+1)(k+1)+2}{(d+1)(k+1)-d+1}\,Eq\eqref{gf12224}\right)$$
leads to
\begin{eqnarray}\label{an_tel}
&&\frac{(d+1)(k+1)}{((d+1)(k+1)+2)((d+1)(k+1)+1)}\,a_{(d+1)(k+1)}-\frac{(d+1)k}{((d+1)k+2)((d+1)k+1)}a_{(d+1)k}=\nonumber\\
&=&{\frac {-A_{{1}}d+dA_{{3}}-A_{{3}}}{ \left( (d+1)k+d+2 \right) d}}+{
\frac {2\,A_{{1}}-A_{{3}}}{(d+1)k+d+3}}+{\frac {A_{{2}}d+A_{{3}}}{
 \left( (d+1)k+2 \right) d}}\nonumber\\
 && +\left(\frac{2}{  (d+1)k+d+3 }- \frac{1}{ (d+1)k+d+2}\right)\sum _{l=1}^{k-2}{\frac {a}{ \left( d+1 \right)  \left( k+1-l \right) 
-d+1}}\nonumber\\
 &&-\frac{1}{  \left( 
d+1 \right)  \left( k+1 \right) -d+1 }\sum _{l=2}^{k+1}{\frac {a}{ \left( d+1 \right) l+1}} 
\nonumber\\
&=&{\frac {-A_{{1}}d+dA_{{3}}-A_{{3}}}{ \left( (d+1)k+d+2 \right) d}}+{
\frac {2\,A_{{1}}-A_{{3}}}{(d+1)k+d+3}}+{\frac {A_{{2}}d+A_{{3}}}{
 \left( (d+1)k+2 \right) d}}\nonumber\\
 && +\left(\frac{2}{  (d+1)k+d+3 }- \frac{1}{ (d+1)k+d+2}\right)\sum _{l=2}^{k-1}{\frac {a}{ \left( d+1 \right)  \left( l+1 \right) 
-d+1}}\nonumber\\
 &&-\frac{1}{  \left( 
d+1 \right)  \left( k+1 \right) -d+1 }\sum _{l=2}^{k+1}{\frac {a}{ \left( d+1 \right) l+1}} 
\nonumber\\
&=&\frac{ B_1}{k+\frac{d+3}{d+1}}+\frac{B_2}{k+\frac{d+2}{d+1}}+\frac { B_3}{k+\frac{2}{d+1}}+\frac{a}{(d+1)^2}\left(\frac{2}{  k+\frac{d+3}{d+1} }- \frac{1}{ k+\frac{d+2}{d+1} }\right) \Psi \left( k+\frac{2}{d+1} \right) \nonumber\\
&&-\frac{a}{ 
(d+1)^2 }\frac{1}{k+\frac{2}{d+1}  }\Psi\left( k+2+\frac{1}{d+1} \right),
\end{eqnarray}
where the Digamma function $\Psi(x)$ as well as the short notations $B_1=\frac{2{A_1}-{ A_3}}{d+1}-\frac{2a}{(d+1)^2}\Psi\left(2+\frac{2}{d+1}\right)$, $B_2=\frac{-dA_1+(d-1)A_3}{d(d+1)}+\frac{a}{(d+1)^2}\Psi\left(2+\frac{2}{d+1}\right)$ and $B_3=\frac{ dA_2+ A_3}{d(d+1)}+\frac{a}{(d+1)^2}\Psi\left(2+\frac{1}{d+1}\right)$  are introduced.
Taking $$U_k=\frac{(d+1)k}{((d+1)k+2)((d+1)k+1)}a_{(d+1)k}$$ and 
\begin{eqnarray}
 G(k+1)
&=&\frac{ B_1}{k+\frac{d+3}{d+1}}+\frac{B_2}{k+\frac{d+2}{d+1}}+\frac { B_3}{k+\frac{2}{d+1}}+\frac{a}{(d+1)^2}\left(\frac{2}{  k+\frac{d+3}{d+1} }- \frac{1}{ k+\frac{d+2}{d+1} }\right) \Psi \left( k+\frac{2}{d+1} \right) \nonumber\\
&&-\frac{a}{ 
(d+1)^2 }\frac{1}{k+\frac{2}{d+1}  }\Psi\left( k+2+\frac{1}{d+1} \right).\end{eqnarray}
then \eqref{an_tel} can be  written in compact form as
$$U_{k+1}-U_k=G(k+1).$$
The later recurrence is easily solved to give $$U_k=U_3+\sum^{k}_{j=4}G(j).$$
By using the formula $\Psi(j+1)=\Psi(j)+1/j$ 
and the relations \cite[Theorems 3.1 and 3.2]{milgram}
\begin{equation}
\sum_{l=0}^{k}\frac{\Psi(l+\alpha)}{l+\beta}+\sum_{l=0}^{k}\frac{\Psi(l+\beta+1)}{l+\alpha}=\Psi(k+\alpha+1)\Psi(k+\beta+1)-\Psi(\alpha)\Psi(\beta),
\end{equation}
\begin{equation}
\sum_{j=0}^{k}\frac{\Psi(j+\beta)}{j+\beta}=\frac{1}{2}\left[\Psi\, '(k+\beta+1)-\Psi\,'(\beta)+\Psi(k+\beta+1) ^2-\Psi(\beta)^2\right],
\end{equation}
we obtain
\begin{eqnarray}\label{a2kasympt}
U_k&=&\frac{(d+1)k}{((d+1)k+2)((d+1)k+1)}a_{(d+1)k}=B_{{1}}\Psi \left( k+{\frac {d+3}{d+1}} \right) +   B_{{2}}\Psi \left( k+{\frac {d+2}{d+1}}
 \right) +   B_{{3}}\Psi
 \left( k+\frac{2}{ d+1} \right) \nonumber\\
 && +{\frac {2a}{ \left( d+1 \right) 
 \left((d+1) k+2\right) }}+\frac{a}{\left( d+1 \right) ^{
2}} \left( \Psi'
 \left( k+{\frac {d+3}{d+1}} \right)  + \left( \Psi \left( k+{\frac {d+3}{d+1}}
 \right)  \right) ^{2}\right) \nonumber\\
 && -\frac{a }{\left( d+1 \right) ^{
2}} \Psi
 \left( k+\frac{2} { d+1 } \right) \Psi \left( k+{\frac {d
+2}{d+1}} \right)+\delta_2,
\end{eqnarray}
where
\begin{eqnarray*}
 \delta_2&=&\frac{a }{\left( d+1 \right) ^{
2}} \Psi \left(\frac{ 2}{d+1} \right) 
\Psi \left( {\frac {d+2}{d+1}} \right)  -   B_{{1}}\Psi \left( {
\frac {d+3}{d+1}} \right)  -   B_{{2}}\Psi \left( {\frac {d+2}{d+1}} \right)
 -B_{{3}}\Psi \left( \frac{2}{ d+1 } \right)-\\
 &&{\frac {a}{d+1}}       -\frac{a}{\left( d+1 \right) ^{
2}}\Psi' \left( {\frac {d+3}{
d+1}}\right)   - \frac{a }{\left( d+1 \right) ^{
2}} \left( \Psi \left( {\frac {d+3}{d+1}}
 \right)  \right) ^{2}-G(1)-G(2)-G(3)+U_3.
\end{eqnarray*}
From \eqref{a2kasympt} we deduce the asymptotic behaviour of $a_{(d+1)k}$ as
$k\to\infty$:
\begin{eqnarray}\label{a2kk}
\,a_{(d+1)k}&=&\left(\delta _{1}\left(k +\frac{3}{d+1}+
\frac{2}{(d+1)^2k}\right)+{\frac {a \left( d+2 \right) }{ \left( d+1 \right) ^{2}}}+\frac12{\frac 
{a \left( d+4 \right) }{ \left( d+1 \right) ^{3}k}}+\frac16 {\frac {ad
 \left( d+2 \right) }{ \left( d+1 \right) ^{4}{k}^{2}}}-\frac14 {\frac {a
d \left( d+2 \right) }{ \left( d+1 \right) ^{5}{k}^{3}}}
+\cdots\right)\ln(k)\nonumber\\
&+&\delta
_{2}(d+1)k+\delta _{3}+\frac{\delta _{4} }{k}+\frac{\delta _{5}
}{k^{2}}+\frac{\delta_6}{k^3}+\cdots
\end{eqnarray}
where  coefficients $\delta _{i}$ are defined by (higher terms are omitted)
\begin{eqnarray}\nonumber
&&\delta _{1}=(d+1)(B_1+B_2+B_3),\\\nonumber
&&\delta_3={\frac {  {d}^{2}+d +4 }{d+1}}B_{{1}}+{\frac {  {
d}^{2} +3}{d+1}} B_{{2}}-{\frac {  d-3  }{d+1
}}B_{{3}}+3\,\delta_{{2}}+\,{\frac {3}{d+1}}\,a
,
\\\nonumber
&&\delta_4={\frac {  5\,{d}^{2}+4\,d+47 }{ 6\left( d+1
 \right) ^{2}}} B_{{1}}+{\frac { 8\,{d}^{2}+d+41  }{
 6\left( d+1 \right) ^{2}}}B_{{2}}-{\frac { {d}^{2}+8\,d-41
  }{6 \left( d+1 \right) ^{2}}}B_{{3}}+{\frac {2}{
d+1}}\delta_{{2}}+{\frac { 9\,d+13  }{2 \left( d+1 \right) ^{3}
}}\,a
,
\\\nonumber
&&\delta_5={\frac {  {d}^{2}+2\,d+9  }{6 \left( d+1
 \right) ^{3}}}B_{{1}}-\,{\frac {  {d}^{3}+2\,{d}^{2}-9 }{ 6\left( d+1 \right) ^{3}}} B_{{
2}}+{\frac {  {d}^{2}+2\,d+9
  }{ 6\left( d+1 \right) ^{3}}}B_{{3}}+\,{\frac {  {d}^{3}+5
\,{d}^{2}-2\,d +6 }{12 \left( d+1 \right) ^{4}}}\,a,
\\\nonumber
&&\delta_6={\frac {  {d}^{2}+2\,d-19  }{60
 \left( d+1 \right) ^{2}}}B_{{1}}+{\frac {  {d}^{3}+18
\,{d}^{2}+13\,d-19  }{ 60\left( d+1 \right) ^{3}}}B_{{2}}+{\frac { {d}^{2}+2\,d-19 }{ 60\left( d+1
 \right) ^{2}}}B_{{3}}-{\frac {7\,{d}^{3}+20\,{d}^{2}-11\,d-8
 }{ 24\left( d+1 \right) ^{5}}}\,a
,\\\nonumber
&&\quad\vdots \nonumber
\end{eqnarray}
At this level we should remark that $\lim_{k\to \infty}a_{2k}=\infty$ for all
$\delta_i$, $i=1,2,3,...,$ since $a\neq 0$. Recall that
$c_n^d=\frac{T_1}{d+1}\left((n-d+1)\frac{a_n}{a_{n-d}}-(n-d)\right),\;\;\text{for } n\geq d
$, then the
equation \eqref{gf12222} can be written as

\begin{equation}\label{phi}
a_{(d+1)k+d}\left(b+\frac{T_1}{d+1}\frac{(d+1)k+d+2}{a_{(d+1)k}}\right)=\phi(k),
\end{equation}
where $$\phi \left( k\right)
=\frac{T_1}{d+1}\frac{((d+1)k)((d+1)k+d+2)}{(d+1)k+1}-bA_2+\frac{bA_3}{(d+1)k+d}+\sum_{l=2}^{k-1}\frac{ab}{(d+1)l+1},
$$
 and the equation \eqref{gf12223} can be written
 \begin{equation}\label{phitild}
 a_{(d+1)(k+1)}\left(b+\frac{T_1}{d+1}\frac{(d+1)(k+1)+2}{a_{(d+1)k+1}}\right)=\tilde{\phi}(k),
 \end{equation}
 where $$\tilde{\phi} \left( k\right)
=\frac{T_1}{d+1}\frac{((d+1)(k+1)+2)((d+1)k+1)}{(d+1)k+2}+bA_1+\frac{bA_3}{(d+1)(k+1)}+\sum_{l=3}^{k}\frac{ab}{(d+1)l-d+1}.
$$
From \eqref{phi} and \eqref{phitild} we have 
\begin{equation}
 \frac{1}{a_{(d+1)(k+1)+d}}=
\frac{1}{\phi \left( k+1\right) }\left(
b+\frac{T_1}{d+1}\frac{(d+1)(k+1)+d+2}{a_{(d+1)(k+1)}}\right)
\label{e3d}
\end{equation}
and
\begin{equation}
\frac{1}{a_{(d+1)k+1}}=\frac{d+1}{T_1}\frac{1}{(d+1)(k+1)+2}\left(\frac{\tilde{\phi} (k)}{a_{(d+1)(k+1)}}-b\right),
\label{e4d}
\end{equation}
which give an explicit formula for $a_{(d+1)(k+1)+d}$ and $a_{(d+1)k+1}$.

$\bullet$ If we suppose  $\lim_{k\to\infty} \frac{a_{(d+1)k}}{(d+1)k}=\infty ,$ then from \eqref{e3d} and \eqref{e4d} we deduce on one side%
\begin{equation}
\lim_{k\to\infty} \frac{(d+1)(k+2)}{a_{(d+1)(k+1)+d}}=\lim_{k\to\infty}
\frac{(d+1)(k+2)}{\phi \left( k+1\right) }\left(
b+\frac{T_1}{d+1}\frac{(d+1)(k+1)+d+2}{a_{(d+1)(k+1)}}\right)=\frac{(d+1)b}{T_{1}}
\label{e1}
\end{equation}
and
\begin{equation}
\lim_{k\to\infty} \frac{(d+1)(k+1)+2}{a_{(d+1)k+1}}=\lim_{k\to\infty}
\frac{d+1}{T_1}\left(\frac{\tilde{\phi} (k)}{a_{(d+1)(k+1)}}-b\right)=-\frac{(d+1)b}{T_{1}}.
\label{e11}
\end{equation}
On the other side, for $n=(d+1)k+2d+1$, \eqref{gf1102} reads
\begin{equation}\label{e2}
\frac{(d+1)T_2}{T_1^2}\left(1\!-\!\frac{(d+1)k}{(d+1)k+1}\frac{a_{(d+1)k}}{a_{(d+1)(k+1)+d}}\right)\!=\!\frac{(d+1)(k+2)}{a_{(d+1)(k+1)+d}}\!-\!\frac{2((d+1)(k+1)+1)}{a_{(d+1)(k+1)}}\!+\!\frac{(d+1)k+2}{a_{(d+1)k+1}}.
\end{equation}
Under the assumption $T_2\neq 0$, \eqref{e2} admits the limit $\infty =0$, as $k\to\infty$,  which exhibit a contradiction.

$\bullet$ Now if $\lim_{k\to\infty} \frac{a_{(d+1)k}}{(d+1)k}=\eta_{1}\neq 0,$ then from \eqref{e3d} and \eqref{e4d} we have 
\begin{equation}
\lim_{k\to\infty} \frac{(d+1)(k+2)}{a_{(d+1)(k+1)+d}}\!=\!\lim_{k\to\infty}
\frac{(d+1)(k+2)}{\phi \left( k+1\right) }\left(
b\!+\!\frac{T_1}{d+1}\frac{(d+1)(k+1)+d+2}{a_{(d+1)(k+1)}}\right)\!=\!
\frac{(d+1)b}{T_{1}}\!+\!\frac{1}{\eta_1}
:=\eta _{2},
\label{e3}
\end{equation}
and
\begin{equation}
\lim_{k\to\infty} \frac{(d+1)(k+1)+2}{a_{(d+1)k+1}}=\lim_{k\to\infty}
\frac{d+1}{T_1}\left(\frac{\tilde{\phi} (k)}{a_{(d+1)(k+1)}}-b\right)=\frac{1}{\eta_1}-\frac{(d+1)b}{T_{1}}=\frac{2}{\eta_1}-\eta_2.
\label{e31}
\end{equation}
By taking the limit in (\ref{e2})  we obtain,
\[
\frac{(d+1)T_2}{T_1^2}\left( 1-{\eta _{1}}{\eta_{2}}\right) =\eta _{2}-\frac{2}{\eta _{1}}+\left(\frac{2}{\eta_1}-\eta_2\right)=0.
\]
If $\eta _{2}=0,$ we have the contradiction $T_2=0$. But if $\eta _{2}\neq 0 ,$ then $\eta _{2}=1/\eta _{1}$. From (\ref{e3}) we get  $
(d+1)b/T_{1}+1/\eta_1=\eta _{2}$, which gives the contradiction $b=0$.\\

$\bullet$ Finally if $\lim_{k\to\infty} \frac{a_{(d+1)k}}{(d+1)k}=0,$ then according to \eqref{a2kk}, we have
$\delta_1=\delta_2=0$ and
\begin{eqnarray}\label{a2kkk}
a_{(d+1)k}&=&\frac{((d+1)k+2)((d+1)k+1)}{(d+1)k}\left(B_{{1}}\Psi \left( k+{\frac {d+3}{d+1}} \right) +   B_{{2}}\Psi \left( k+{\frac {d+2}{d+1}}
 \right) -(B_1+   B_{{2}})\Psi
 \left( k+\frac{2}{ d+1} \right)\right. \nonumber\\
 && +{\frac {2a}{ \left( d+1 \right) 
 \left((d+1) k+2 \right) }}+\frac{a}{\left( d+1 \right) ^{
2}} \left( \Psi'
 \left( k+{\frac {d+3}{d+1}} \right)  + \left( \Psi \left( k+{\frac {d+3}{d+1}}
 \right)  \right) ^{2}\right) \nonumber\\
 && \left.-\frac{a }{\left( d+1 \right) ^{
2}} \Psi
 \left( k+\frac{2} { d+1 } \right) \Psi \left( k+{\frac {d
+2}{d+1}} \right)\right),
\nonumber\\
&=&\!\left({\frac {a \left( d+2 \right) }{ \left( d+1 \right) ^{2}}}+\frac12{\frac 
{a \left( d+4 \right) }{ \left( d+1 \right) ^{3}k}}+\frac16 {\frac {ad
 \left( d+2 \right) }{ \left( d+1 \right) ^{4}{k}^{2}}}-\frac14 {\frac {a
d \left( d+2 \right) }{ \left( d+1 \right) ^{5}{k}^{3}}}
+\cdots\right)\ln(k)\!+\!\delta _{3}\!+\!\frac{\delta _{4} }{k}\!+\!\frac{\delta _{5}
}{k^{2}}\!+\!\frac{\delta_6}{k^3}+\cdots.
\end{eqnarray}
Let write the equation \eqref{e2} as
\begin{equation}\label{e20}
\frac{(d+1)T_2}{T_1^2}\left(1\!-\!\frac{(d+1)k}{(d+1)k+1}\frac{a_{(d+1)k}}{a_{(d+1)(k+1)+d}}\right)\!-\!\frac{(d+1)(k+2)}{a_{(d+1)(k+1)+d}}\!+\!\frac{2((d+1)(k+1)+1)}{a_{(d+1)(k+1)}}\!-\!\frac{(d+1)k+2}{a_{(d+1)k+1}}\!=\!0.
\end{equation}
After multiplying the both sides of the equation \eqref{e20} by $\phi(k+1)a_{(d+1)(k+1)}$ and using \eqref{a2kkk}, \eqref{e3d} and \eqref{e4d}, we get, as $k\to \infty$, 
\begin{equation}
\lambda_1 \ln(k)^2+\lambda_2 \ln(k)+\lambda_3 +\cdots=0
\end{equation}
where
\begin{eqnarray*}
\lambda_1 &=&\left( -\frac { \left( d+2 \right) b}{ \left( d+1 \right) ^3
T_1^2}T_2+\frac {b^2}{ \left( d+1 \right) ^2T_1} \right) a^2
\end{eqnarray*}
and\\
\begin{eqnarray*}
\lambda_2 &=&\left(\frac{ 2b ( d+2 ) a^2}{( d+1 )^3T_1^2}\Psi \left( \frac {2d+3}{d+1} \right)  -3\frac { ( d+3 ) ba^2}{ ( d+1 ) ^2T_1^2}+2\frac {
  bA_2 (d+2)+T_1(d+3)  }{ ( d+1 ) ^2T_1^2}a \right) T_2\\
 &&-\frac{2b^2a^2}{( d+1 ) ^{2}T_1}\Psi \left(\frac {2d+3}{d+1} \right)  +3\frac {b^2a^2}{(d+1)T_1}-2\,\frac { ( bA_2+T_1 ) ba}{(d+1)T_1}.
\end{eqnarray*}
So, we must have $\lambda_i=0$, $i=1,2,3,...$. As $a\neq 0$ and $b\neq 0$, from $\lambda_1=0$ we get 
\begin{equation}\label{T2d}
T_2={\frac { d+1  }{d+2}}bT_1,
\end{equation}
and by replacing  $a=T_2/b^2$ and \eqref{T2d} in the equation $\lambda_2=0$ we obtain
\begin{equation}\label{T2d1}
\frac { d-1  }{ \left( d+2 \right) ^{3}}{T_1}^{2}=0.
\end{equation}
If $d\neq 1$, \eqref{T2d1} gives $T_1=0$ which is a contradiction. The case $d=1$  is already treated, \cite[Theorem 1]{meskzahaf}.\\

{\bf\textit { Case 3:}} For every $k_{0}\geq 3$, there exists infinitely many $k,\kappa\geq k_{0}$ such that: $D_{k}=0$ and $D_{\kappa}\neq 0$.\\
We take $k_1$ and $k_2$, $k_1\neq k_2$, with $D_{k_1}=0, D_{k_1+1}\neq 0$, $D_{k_2}=0$ and $D_{k_2+1}\neq 0$ to get from \eqref{gfd23} that
\begin{equation}
a_n-\frac{n-k_1(d+1)-2d-1}{n-k_1(d+1)-2d}a_{n-k_1(d+1)-2d-1}\;\; \text{and}\;\; a_n-\frac{n-k_2(d+1)-2d-1}{n-k_2(d+1)-2d}a_{n-k_2(d+1)-2d-1}
\end{equation}
are two rational functions of $n$.
Consequently, an analogous reasoning  to that of Lemma~\ref{cor6} completes the proof.

In the next subsection we give some expressions concerning the sequence $\{\gamma_{n}^d\}_{n\geq d}$ and the power series $F(t)$. The later is expressed by hypergeometric series \eqref{F01}.

\subsection{ Expressions for $\gamma_{n}^d$ and $F(t)$}

\begin{proposition}
The $d$-symmetric $d$-PS, $\{P_n\}$, generated by \eqref{gf00} satisfies
\begin{eqnarray}\label{DD}
\left\{
\begin{array}{l}
xP_n(x)=P_{n+1}(x)+\gamma_n^d P_{n-d}(x),\quad n\geq
0,\\
P_{-n}(x)=0,\;\;1\leq n\leq d,\;\;\text{and }P_0(x)=1
\end{array}
\right.
\end{eqnarray} 
with
\begin{equation}\label{gm1}
\gamma_{n}^d=\frac{T_1}{d+1}\left((n-d+1)\frac{\alpha_{n-d+1}}{\alpha_{n+1}}-(n-d)\frac{\alpha_{n-d}}{\alpha_n}\right),\;\; \text{for }n\geq d,
\end{equation}
and for $n=dm+r$, $m\geq 1$, $1\leq r\leq d$ we have 
\begin{equation}\label{gm2}
\gamma_{dm+r}^d=\frac{T_1}{d+1}\frac{(dm+r)!(\beta_r(m-r-1)+(d+1)b_r)}{(d(m-1)+r)!\prod_{l=1}^{r}(\beta_lm+b_l)\prod_{l=r}^{d}(\beta_l(m-1)+b_l)}
\end{equation}
with $\gamma_{d}^d=T_1d!/\prod_{l=1}^{d}b_l$,  $b_l=(l+1)\alpha_{l+1}/\alpha_l$ and $\beta_l=b_{d+l}-b_l$, $1\leq l\leq d$.
\end{proposition}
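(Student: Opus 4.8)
The plan is to dispatch the three assertions in order. For the recurrence \eqref{DD}: Theorem~\ref{Th5} gives $R(t)=T_1t^{d+1}/(d+1)$, and the reduction of the full recurrence \eqref{dorth0} to $xP_n(x)=P_{n+1}(x)+\gamma_n^dP_{n-d}(x)$ under $d$-symmetry is precisely the fact recorded (after \cite{MaroniDouak}) in the proof of Lemma~\ref{Lem1}, so \eqref{DD} is immediate. For \eqref{gm1} I would start from \eqref{cdn}, namely $c_n^d=\frac{T_1}{d+1}\bigl((n-d+1)\frac{a_n}{a_{n-d}}-(n-d)\bigr)$, and pass back to $\gamma_n^d$ through the definitions $a_n=\alpha_n/\alpha_{n+1}$ and $c_n^d=\frac{\alpha_n}{\alpha_{n-d}}\gamma_n^d$ from Proposition~\ref{prop2}. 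Since $a_n/a_{n-d}=\alpha_n\alpha_{n-d+1}/(\alpha_{n+1}\alpha_{n-d})$, multiplying by $\alpha_{n-d}/\alpha_n$ collapses the first term to $\alpha_{n-d+1}/\alpha_{n+1}$ and sends the second to $(n-d)\alpha_{n-d}/\alpha_n$, which is exactly \eqref{gm1}.

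The substance lies in \eqref{gm2}. First I would identify the auxiliary sequence: with $b_l:=(l+1)\alpha_{l+1}/\alpha_l$ one has $a_l=(l+1)/b_l$, and substituting this into \eqref{cdn} reproduces \eqref{gf111a} verbatim (with $R_{d+1}=T_1$); hence this $b_l$ is a solution of \eqref{gfd124}, so Corollary~\ref{cor06}(ii) supplies the affine form $b_{md+r}=\beta_r m+b_r$ with $\beta_r=b_{d+r}-b_r$. Next I would turn the two $\alpha$-ratios in \eqref{gm1} into products of the $b_j$: from $\alpha_{l+1}/\alpha_l=b_l/(l+1)$ one obtains $\frac{\alpha_{n-d+1}}{\alpha_{n+1}}=\frac{(n+1)!}{(n-d+1)!\,\prod_{j=n-d+1}^{n}b_j}$ and $\frac{\alpha_{n-d}}{\alpha_n}=\frac{n!}{(n-d)!\,\prod_{j=n-d}^{n-1}b_j}$. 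Placing both over the common denominator $\prod_{j=n-d}^{n}b_j$ and extracting the factor $\frac{n!}{(n-d)!}$ yields $\gamma_n^d=\frac{T_1}{d+1}\,\frac{n!}{(n-d)!}\,\frac{(n+1)b_{n-d}-(n-d)b_n}{\prod_{j=n-d}^{n}b_j}$.

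Now I would specialize to $n=dm+r$ with $1\le r\le d$, $m\ge1$. Substituting $b_n=\beta_r m+b_r$ and $b_{n-d}=\beta_r(m-1)+b_r$, the quadratic-in-$m$ parts of $(n+1)b_{n-d}-(n-d)b_n$ cancel and the numerator collapses to $\beta_r(m-r-1)+(d+1)b_r$. For the denominator I would split the $d+1$ consecutive indices running from $n-d=d(m-1)+r$ to $n=dm+r$ at the multiple $dm$: the lower block $d(m-1)+r,\dots,dm$ realizes the residues $r,\dots,d$ at level $m-1$, giving $\prod_{l=r}^{d}(\beta_l(m-1)+b_l)$, while the upper block $dm+1,\dots,dm+r$ realizes the residues $1,\dots,r$ at level $m$, giving $\prod_{l=1}^{r}(\beta_l m+b_l)$; together with $\frac{n!}{(n-d)!}=\frac{(dm+r)!}{(d(m-1)+r)!}$ this is exactly \eqref{gm2}. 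The boundary index $n=d$ falls outside this range ($m=0$), so I would handle it directly from \eqref{gm1}: the $(n-d)$-term vanishes and $\gamma_d^d=\frac{T_1}{d+1}\frac{\alpha_1}{\alpha_{d+1}}=\frac{T_1}{d+1}\prod_{j=1}^{d}\frac{j+1}{b_j}=T_1d!/\prod_{l=1}^{d}b_l$.

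The main obstacle is exactly the denominator partition in the preceding step. The index run $j=n-d,\dots,n$ straddles the multiple $dm$, and the affine formula $b_{md+r}=\beta_r m+b_r$ changes its level argument precisely at this wrap-around; one must therefore assign each factor $b_j$ to the correct residue $l$ and the correct level ($m-1$ for $j\le dm$, $m$ for $j>dm$) and check that the two resulting products match those displayed in \eqref{gm2}. Everything else---the telescoping of $\alpha$-ratios into factorials and the cancellation in the numerator---is routine once this matching is secured.
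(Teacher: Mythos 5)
Your proposal is correct and takes essentially the same route as the paper's own proof: you derive \eqref{gm1} from \eqref{cdn} via $a_n=\alpha_n/\alpha_{n+1}$ and $c_n^d=(\alpha_n/\alpha_{n-d})\gamma_n^d$, obtain the affine form $b_{md+r}=\beta_r m+b_r$ by applying Corollary~\ref{cor06}(ii) to the specific choice $b_n=(n+1)\alpha_{n+1}/\alpha_n$, and then perform the same numerator cancellation and the same wrap-around splitting of $\prod_{j=n-d}^{n}b_j$ into the level-$(m-1)$ block $\prod_{l=r}^{d}(\beta_l(m-1)+b_l)$ and the level-$m$ block $\prod_{l=1}^{r}(\beta_l m+b_l)$ as in \eqref{gm4}--\eqref{gm6}, with the boundary value $\gamma_d^d$ computed directly from \eqref{gm1} exactly as the paper does. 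The only cosmetic difference is that you telescope the two $\alpha$-ratios separately and put them over a common denominator, where the paper computes the single ratio $\alpha_n b_n/\alpha_{n-d}$ at once.
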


\begin{proof}
The equation \eqref{cdn} is
\begin{equation}\label{gf11aa}
c_{n}^{d} =\frac{\alpha_{n}}{\alpha_{n-d}}\gamma_{n}^d=\frac{T_{1}}{d+1}\left((n-d+1)\frac{a_{n}}{a_{n-d}}-(n-d)\right),\;\; \text{ for } n\geq d.
\end{equation}
As $a_n=\alpha_{n}/\alpha_{n+1}$ we get \eqref{gm1}.\\
According to Theorem~\ref{Th5} we have $R(t)=T_{1}t^{d+1}/(d+1)$. So, by Corollary~\ref{cor06} we obtain
\begin{equation}\label{gf11a}
c_{n}^{d} =\frac{\alpha_{n}}{\alpha_{n-d}}\gamma_{n}^d=\frac{T_{1}}{d+1}\left((n+1)\frac{b_{n-d}}{b_n}-(n-d)\right),\;\;\text{ for } n\geq d+1,
\end{equation} 
with $b_n=(n+1)/a_n=(n+1)\alpha_{n+1}/\alpha_n$ and
\begin{equation}\label{gf11ab}
b_{md+r}=\beta_rm+b_{r},\;\text{ for }\; m\geq 0,\; 1\leq r\leq d,
\end{equation}
where $\beta_r=b_{d+r}-b_r$, for $1\leq r\leq d$.\\
The equation \eqref{gf11a} gives
\begin{equation}\label{gm3}
\gamma_{n}^d=\frac{T_{1}}{d+1}\frac{\alpha_{n-d}\left((n+1)b_{n-d}-(n-d)b_n\right)}{\alpha_{n}b_n},\;\;\text{ for } n\geq d+1.
\end{equation}
We calculate $\alpha_nb_n/\alpha_{n-d}$ by using the relation $\alpha_n=b_{n-1}\alpha_{n-1}/n=\prod_{l=0}^{n-1}b_{l}/n!$ to find 
\begin{equation}\label{gm4}
\frac{\alpha_nb_n}{\alpha_{n-d}}=\frac{(n-d)!}{n!}\frac{\prod_{l=0}^{n}b_{l}}{\prod_{l=0}^{n-d-1}b_{l}}=\frac{(n-d)!}{n!}\prod_{l=n-d}^{n}b_{l}.
\end{equation}
Now for $n=md+r$, we can write
\begin{equation}\label{gm5}
\prod_{l=n-d}^{n}b_{l}=\prod_{l=r}^{d}b_{(m-1)d+l}\prod_{l=1}^{r}b_{md+l}=\prod_{l=r}^{d}(\beta_l(m-1)+b_l)\prod_{l=1}^{r}(\beta_lm+b_l)
\end{equation}
and
\begin{eqnarray}\label{gm6}
(n+1)b_{n-d}-(n-d)b_n&=&(md+r+1)b_{(m-1)d+r}-(md+r-d)b_{md+r}\nonumber\\
&=&(md+r+1)(\beta_r(m-1)+b_r)-(md+r-d)(\beta_rm+b_r)\nonumber\\
&=&\beta_r(m-r-1)+(d+1)b_r.
\end{eqnarray}
Finally, \eqref{gm1} gives $$\gamma_{d}^d=\frac{T_1\alpha_1}{(d+1)\alpha_{d+1}}=\frac{T_1}{d+1}\prod_{l=1}^{d}\frac{\alpha_l}{\alpha_{l+1}}=\frac{T_1d!}{\prod_{l=1}^{d}b_l}$$
and \eqref{gm2} follows by combining \eqref{gm4}, \eqref{gm5} and \eqref{gm6}. 
\end{proof}

\begin{proposition}\label{propF}
If $\prod_{l=1}^{d}\beta_l\neq 0$ then $F(t)=1+F_1(t)+F_2(t)$, where
\begin{eqnarray}\label{F1}
F_1(t)=\alpha_d t^d{}_{d+1}F_{d}\left(
\begin{array}{lll}
1,&\frac{b_d}{\beta_d},&\left(\frac{b_{l+d}}{\beta_l}\right)_{l=1}^{d-1}\\
\left(\frac{l+d}{d}\right)_{l=1}^{d}&
\end{array}
;\left(\prod_{l=1}^{d}\beta_l\right)\left(\frac{t}{d}\right)^d\right)
\end{eqnarray}
and
\begin{eqnarray}\label{F2}
F_2(t)=\sum_{r=1}^{d-1}\alpha_rt^r{}_dF_{d-1}\left(
\begin{array}{lll}
\left(\frac{b_{l+d}}{\beta_l}\right)_{l=1}^{r-1},&\left(\frac{b_l}{\beta_l}\right)_{l=r}^{d}\\
\left(\frac{l}{d}\right)_{l=r+1}^{d-1},&\left(\frac{l}{d}\right)_{l=d+1}^{d+r}&
\end{array}
;\left(\prod_{l=1}^{d}\beta_l\right)\left(\frac{t}{d}\right)^d\right).
\end{eqnarray}
Furthermore, if $\tilde{\beta}_d:=b_d-\beta_d=2b_d-b_{2d}\neq 0$, then $F_1(t)$ in \eqref{F1} can be written as
\begin{equation}\label{F3}
F_1(t)=\frac{\alpha_1}{\tilde{\beta}_d}\left[{}_dF_{d-1}\left(
\begin{array}{ll}
\frac{\tilde{\beta}_d}{\beta_d},&\left(\frac{b_l}{\beta_l}\right)_{l=1}^{d-1}\\
\left(\frac{l}{d}\right)_{l=1}^{d-1}&
\end{array}
;\left(\prod_{l=1}^{d}\beta_l\right)\left(\frac{t}{d}\right)^d\right)-1\right].
\end{equation}
\end{proposition}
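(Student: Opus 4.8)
The plan is to write $F(t)=\sum_{n\ge 0}\alpha_n t^n$ and expand it by splitting the index $n$ into residue classes modulo $d$. Writing $n=md+r$ with $m\ge 0$ and $1\le r\le d$, the constant term $\alpha_0=1$ is isolated, the classes $r=1,\dots,d-1$ will assemble into $F_2(t)$, and the class $r=d$ (i.e.\ $n=(m+1)d$, the positive multiples of $d$) will assemble into $F_1(t)$. For each fixed $r$ the inner sum $\sum_{m\ge0}\alpha_{md+r}(t^d)^m$ is a power series in $t^d$ whose $m=0$ coefficient is $\alpha_r$ (resp.\ $\alpha_d$), so it suffices to show that the ratio of consecutive coefficients is exactly that of the claimed hypergeometric series with argument $z=\left(\prod_{l=1}^d\beta_l\right)(t/d)^d$.

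First I would compute that ratio. Using $\alpha_n=\frac1{n!}\prod_{l=0}^{n-1}b_l$ (established in the proof of the preceding proposition, see \eqref{gm4}) together with the periodicity $b_{md+r}=\beta_r m+b_r$ of \eqref{gf11ab}, I get
$$\frac{\alpha_{(m+1)d+r}}{\alpha_{md+r}}=\frac{\prod_{l=r}^{d}(\beta_l m+b_l)\,\prod_{l=1}^{r-1}(\beta_l(m+1)+b_l)}{\prod_{j=1}^{d}(md+r+j)}.$$
Factoring $\beta_l$ out of each numerator factor produces the common constant $\prod_{l=1}^d\beta_l$ and the parameters $b_l/\beta_l$ (for $l\ge r$) and, using $1+b_l/\beta_l=b_{l+d}/\beta_l$, the parameters $b_{l+d}/\beta_l$ (for $l<r$); factoring $d$ out of each denominator factor produces $d^d$ and the parameters $(r+j)/d$. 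The constants combine into $z=\left(\prod_{l=1}^d\beta_l\right)(t/d)^d$, and the hypothesis $\prod_{l=1}^d\beta_l\neq0$ is exactly what guarantees all these parameters are finite.

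Next I would match this to the hypergeometric template \eqref{F01}, whose term ratio is $\frac{\prod_i(\mu_i+m)}{\prod_j(\nu_j+m)\,(m+1)}\,z$. For $1\le r\le d-1$ one of the denominator factors, namely the one with $r+j=d$, equals $m+1$ and plays the role of the $m!$; the remaining $d-1$ factors give the lower parameters $(r+j)/d$ with $r+j\neq d$, i.e.\ exactly $\left(\frac{l}{d}\right)_{l=r+1}^{d-1}$ and $\left(\frac{l}{d}\right)_{l=d+1}^{d+r}$, while the $d$ numerator parameters are $\left(\frac{b_{l+d}}{\beta_l}\right)_{l=1}^{r-1},\left(\frac{b_l}{\beta_l}\right)_{l=r}^{d}$. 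This is precisely \eqref{F2}. For $r=d$ none of the denominator factors $(d+j)/d$ equals $m+1$, so no factor is available to cancel the compulsory $m!$; I would therefore insert the numerator parameter $1$ (so that $(1)_m=m!$ cancels it), which upgrades the series to a ${}_{d+1}F_d$ with numerator $1,\frac{b_d}{\beta_d},\left(\frac{b_{l+d}}{\beta_l}\right)_{l=1}^{d-1}$ and denominator $\left(\frac{l+d}{d}\right)_{l=1}^{d}$, giving \eqref{F1}. This boundary case $r=d$, where the $(m+1)$ factor disappears and forces the ${}_{d+1}F_d$ form, is the main place where care is needed.

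Finally, for \eqref{F3} (under $\tilde\beta_d\neq0$) I would start from the simplified $F_1$: since $(1)_m=m!$ cancels the definitional $m!$, we have $F_1(t)=\alpha_d t^d\sum_{m\ge0}\frac{(b_d/\beta_d)_m\prod_{l=1}^{d-1}(b_{l+d}/\beta_l)_m}{\prod_{l=1}^{d}((l+d)/d)_m}z^m$. Then I would take the target ${}_dF_{d-1}$ of \eqref{F3}, subtract its $m=0$ term and shift $m\mapsto m+1$; the Pochhammer identities $(\mu)_{m+1}=\mu(\mu+1)_m$ with $\tilde\beta_d/\beta_d+1=b_d/\beta_d$, $b_l/\beta_l+1=b_{l+d}/\beta_l$, $l/d+1=(l+d)/d$ and $(m+1)!=(2)_m$ convert it into the same summand times a constant and one extra power of $z$. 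Collecting the peeled-off constant prefactor reduces, via $b_0=\alpha_1$ and $\alpha_d=\alpha_1\prod_{l=1}^{d-1}b_l/d!$, to exactly $\alpha_d t^d$, which establishes $\frac{\alpha_1}{\tilde\beta_d}\bigl[{}_dF_{d-1}(\dots)-1\bigr]=F_1(t)$; the hypothesis $\tilde\beta_d\neq0$ is needed both to divide and to keep the parameter $\tilde\beta_d/\beta_d$ from annihilating the series.
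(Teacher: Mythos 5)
Your proof is correct and follows essentially the same route as the paper: both split $F(t)=\sum_{n\geq 0}\alpha_n t^n$ into residue classes modulo $d$ (isolating $\alpha_0=1$, the classes $1\leq r\leq d-1$ for \eqref{F2}, and the multiples of $d$ for \eqref{F1}), and both obtain \eqref{F3} from the same Pochhammer shift identities $\tilde{\beta}_d/\beta_d+1=b_d/\beta_d$, $b_l/\beta_l+1=b_{l+d}/\beta_l$, $l/d+1=(l+d)/d$, with the prefactor matching via $\alpha_d=\alpha_1\prod_{l=1}^{d-1}b_l/d!$. The only cosmetic difference is that you identify each section by matching the term ratio $\alpha_{(m+1)d+r}/\alpha_{md+r}$ against the hypergeometric template (which makes the argument self-contained), whereas the paper derives closed-form Pochhammer expressions for $\alpha_{md+r}$ directly, using the factorizations of $(md+r)!$ and $((m+1)d)!$ quoted from Ben Romdhane and Gaied; your computations, including the $r=d$ boundary case forcing the ${}_{d+1}F_d$ form via the inserted parameter $1$, check out.
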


\begin{proof}

Recall that $\alpha_n=\prod_{l=0}^{n-1}b_{l}/n!$. If $\prod_{l=1}^{d}\beta_l\neq 0$, then for $n=md+r$ and using the expression of $b_{md+r}$, we obtain 
\begin{eqnarray}
\prod_{l=0}^{md+r-1}b_{l}&=&b_0b_1b_2\cdots b_{r-1}b_rb_{r+1}\cdots b_{d+r-1}\cdots b_{(m-1)d+r-1}b_{(m-1)d+r}\cdots b_{md}b_{md+1}\cdots b_{md+r-1}\nonumber\\
&=&(b_0b_1b_{2}\cdots b_{r-1}) (b_rb_{r+d}\cdots b_{r+(m-1)d})(b_{r+1}\cdots b_{r+1+(m-1)d})\cdots(b_{d+r-1}b_{2d+r-1}\cdots b_{md+r-1})\nonumber\\
&=&\prod_{l=0}^{r-1}b_l\prod_{l=r}^{d+r-1}b_lb_{l+d}\cdots b_{l+(m-1)d}\nonumber\\
&=&r!\alpha_r\prod_{l=r}^{d}b_lb_{l+d}\cdots b_{l+(m-1)d}\prod_{l=d+1}^{d+r-1}b_lb_{l+d}\cdots b_{l+(m-1)d}\nonumber\\
&=&r!\alpha_r\left(\prod_{l=r}^{d}\beta_l\right)^m\prod_{l=r}^{d}\left(\frac{b_l}{\beta_l}\right)_m\left(\prod_{l=1}^{r-1}\beta_l\right)^m\prod_{l=1}^{r-1}\left(1+\frac{b_l}{\beta_l}\right)_m\nonumber\\
&=&r!\alpha_r\left(\prod_{l=1}^{d}\beta_l\right)^m\prod_{l=r}^{d}\left(\frac{b_l}{\beta_l}\right)_m\prod_{l=1}^{r-1}\left(\frac{b_{l+d}}{\beta_l}\right)_m.
\end{eqnarray}
We have also, for $m\geq 0$ and $0\leq r\leq d-1$, the expressions \cite[Lemma 3.3]{BenRomdane}
\begin{eqnarray*}
(md+r)!=r!m!d^{dm}\prod_{l=r+1}^{d-1}\left(\frac{l}{d}\right)_m\prod_{l=d+1}^{d+r}\left(\frac{l}{d}\right)_m
\end{eqnarray*}
and
\begin{eqnarray*}
((m+1)d)!=d!d^{dm}\prod_{l=1}^{d}\left(1+\frac{l}{d}\right)_m.
\end{eqnarray*}
So,
\begin{equation}\label{F4}
\alpha_{md+r}=\frac{\alpha_r\prod_{l=1}^{r-1}\left(\frac{b_{l+d}}{\beta_l}\right)_m\prod_{l=r}^{d}\left(\frac{b_l}{\beta_l}\right)_m}{m!\prod_{l=r+1}^{d-1}\left(\frac{l}{d}\right)_m\prod_{l=d+1}^{d+r}\left(\frac{l}{d}\right)_m}\left(\left(\prod_{l=1}^{d}\beta_l\right)\left(\frac{1}{d}\right)^{d}\right)^m,\;\;\text{for}\; 1\leq r\leq d-1,
\end{equation}
\begin{equation}\label{F5}
\alpha_{md+d}=\frac{\alpha_d\prod_{l=1}^{d-1}\left(\frac{b_{l+d}}{\beta_l}\right)_m\left(\frac{b_d}{\beta_d}\right)_m}{\prod_{l=1}^{d}\left(1+\frac{l}{d}\right)_m}\left(\left(\prod_{l=1}^{d}\beta_l\right)\left(\frac{1}{d}\right)^{d}\right)^m
\end{equation}
and, if $\tilde{\beta}_d\neq 0$,
\begin{equation}\label{F6}
\alpha_{md+d}=\frac{\alpha_1}{\tilde{\beta}_d}\frac{\prod_{l=1}^{d-1}\left(\frac{b_{l}}{\beta_l}\right)_{m+1}\left(\frac{\tilde{\beta}_d}{\beta_d}\right)_{m+1}}{(m+1)!\prod_{l=1}^{d-1}\left(\frac{l}{d}\right)_{m+1}}\left(\left(\prod_{l=1}^{d}\beta_l\right)\left(\frac{1}{d}\right)^{d}\right)^{m+1}.
\end{equation}
Now, expanding $F(t)$ as
\begin{eqnarray*}
F(t)&=&\sum_{n\geq 0}\alpha_nt^n=1+\sum_{m\geq 0}\alpha_{md+d}t^{md+d}+\sum_{r=1}^{d-1}\sum_{m\geq 0}\alpha_{md+r}t^{md+r},
\end{eqnarray*}
the expressions \eqref{F1}, \eqref{F2} and \eqref{F3} follow from \eqref{F4}, \eqref{F5} and \eqref{F6}, respectively.
\end{proof}

\begin{remark}
In proposition~\ref{propF} two expressions of $F(t)$ are given. The first, when $\prod_{l=1}^{d}\beta_l\neq 0$, Equations \eqref{F1} and \eqref{F2}, from which we can deduce the other limiting cases by tending to zero at least a constant $\beta_r$, $1\leq r\leq d$. So, we can enumerate $2^d$ expressions of $F(t)$ similar to that given in \cite[Theorem 3.1]{BenRomdane}. The second, when $\prod_{l=1}^{d}\beta_l\neq 0$ and $\tilde{\beta}_d=b_d-\beta_d\neq 0$, Equations \eqref{F3} and \eqref{F2}, seems to be a new representation of $F(t)$. Similarly, the other limiting cases can be obtained by tending to zero at least a constant $\beta_r$, $1\leq r\leq d$, and $\tilde{\beta}_d$. So, in this second representation, we can enumerate $3\cdot2^{d-1}(=2^{d}+2^{d-1})$ expressions of $F(t)$. We note that, the resulting $2^{d-1}$ expressions when $\tilde{\beta}_d\to 0$, i.e. $\beta_d\to b_d$, are special cases of the first representation when $\beta_d=b_d$. See the illustrative examples given below.  
\end{remark}

\begin{example}

If $d=1$ then $R(t)=T_1t^2/2$, $b_{n}=(b_2-b_1)n+2b_1-b_2=\beta_1 n+\tilde{\beta}_1$, for $n\geq 1$, and $F(t)$, $\gamma_n^d$ have the expressions:\\
1) If $\beta_1 \neq 0$ we have
\begin{equation}\label{F7}
F(t)\equiv F^{\beta_1}(t)=1+\alpha_1 t\text{ }{}_{2}F_{1}\left(
\begin{array}{lll}
1,&\frac{b_1}{\beta_1}\\
 2&
\end{array}
;\beta_1t\right) 
\end{equation}
with $\gamma_1^1=T_1/b_1$ and for $n\geq 2$,
$$\gamma_n^1=\frac{T_1}{2}\frac{n(\beta_1(n-1)+2\tilde{\beta}_1)}{(\beta_1n+\tilde{\beta}_1)(\beta_1(n-1)+\tilde{\beta}_1)}.$$
The limiting case is
\begin{equation}\label{F8}
\lim_{\beta_1\to 0}F^{\beta_1}(t)=1+\alpha_1 t\text{ }{}_{1}F_{1}\left(
\begin{array}{lll}
1\\
 2&
\end{array}
;b_1t\right)=1+\frac{\alpha_1}{b_1}\left(e^{b_1 t}-1\right)
\end{equation}
with $\gamma_n^1=T_1n/b_1$ for $n\geq 1$.\\
2) If $\tilde{\beta}_1\beta_1 \neq 0$, \cite{anshelev,meskzahaf},
\begin{equation}\label{F9}
F(t)\equiv F^{\beta_1,\tilde{\beta}_1}(t)=1+\frac{\alpha_1}{\tilde{\beta}_1}\left((1-\beta_1 t)^{-\frac{\tilde{\beta}_1}{\beta_1}}-1\right)\;\;\;\text{(Ultraspherical polynomials)}
\end{equation}

with
$$\gamma_n^1=\frac{T_1}{2}\frac{n(\beta_1(n-1)+2\tilde{\beta}_1)}{(\beta_1n+\tilde{\beta}_1)(\beta_1(n-1)+\tilde{\beta}_1)}, \text{ for } n\geq 1.$$
The limiting cases are
\begin{equation}\label{F10}
\lim_{\beta_1\to 0}F^{\beta_1,\tilde{\beta}_1}(t)=1+\frac{\alpha_1}{b_1}\left(e^{b_1 t}-1\right) \;\;\;\text{(Hermite polynomials)}
\end{equation}
and
\begin{equation}\label{F11}
\lim_{\tilde{\beta}_1\to 0}F^{\beta_1,\tilde{\beta}_1}(t)=1-\frac{\alpha_1}{\beta_1}\ln(1-\beta_1 t) \;\;\;\text{(Chebyshev polynomials of the first kind)}.
\end{equation}
Remark that \eqref{F9} and \eqref{F11} are special cases of \eqref{F7} for $\tilde{\beta}_1\beta_1 \neq 0$ and $\tilde{\beta}_1=0$,i.e. $\beta_1=b_1$, respectively. Also, \eqref{F8} is exactly \eqref{F10}, since in this case $\beta_1\to b_1$.
\end{example}

\begin{example}
For $d\geq 1$ we take $b_n=\alpha n+\beta$ for all $n\geq 1$. So, from \eqref{gf11ab} we have
$ \beta_r=d\alpha$ and, of course, $b_r=\alpha r+\beta$, for $1\leq r\leq d$. Thus, for $\alpha\beta\neq 0$, $\gamma_n^d$ becomes
\begin{equation}\label{gmm2}
\gamma_{n}^d=\frac{T_1\alpha^{-d-1}}{d+1}\frac{n!(\alpha(n-d)+(d+1)\beta)}{(n-d)!(n+\frac{\beta}{\alpha}-d)_{d+1}},\;\; n\geq d,
\end{equation}
and for $F(t)$, since $\alpha_n=\prod_{l=0}^{n-1}b_l/n!$, we obtain (see \cite{anshelev} for calculations)
\begin{equation}\label{FF9}
F(t)\equiv F^{\alpha,\beta}(t)=1+\frac{\alpha_1}{\beta}\left((1-\alpha t)^{-\beta/\alpha}-1\right).
\end{equation}
Let $\lambda=\beta/\alpha$. Then for $T_1=\alpha^{d+1}(d+1)^{-d}$ and with the change of variable $t\to (d+1)t/\alpha$ in the generating function $\left(1-\alpha(xt-T_1t^{d+1}/(d+1))\right)^{-\lambda}$, we meet the Humbert polynomials \cite{Humbert} generated by $\left(1-(d+1)xt+t^{d+1}\right)^{-\lambda}$. For $d=1$ we have the ultraspherical polynomials. \\

The limiting cases are\\

1. $\alpha \to 0$ and $\beta\neq 0$:

\begin{equation}\label{FF10}
\lim_{\alpha\to 0}F^{\alpha,\beta}(t)=1+\frac{\alpha_1}{\beta}\left(e^{\beta t}-1\right)
\end{equation}
with $\gamma_{n}^d=T_1\beta^{-d}n!/(n-d)!$, for $n\geq d$. In the generating function $\exp\left(\beta\left(xt-T_1t^{d+1}/(d+1)\right)\right)$, with $T_1=\beta^d\left((d+1)d!\right)^{-1}$ and the change of variable $t\to t/\beta$,  we find the generating function $\exp\left(xt-(d+1)^{-2}t^{d+1}/d!\right)$ of the Gould-Hopper polynomials \cite{Gould}  .\\

2. $\beta \to 0$ and $\alpha\neq 0$

\begin{equation}\label{FFa11}
\lim_{\beta\to 0}F^{\alpha,\beta}(t)=1-\frac{\alpha_1}{\alpha}\ln(1-\alpha t) 
\end{equation}
with $\gamma_{d}^d=T_1\alpha^{-d}$ and $\gamma_{n}^d=T_1\alpha^{-d}/(d+1)$ for $n\geq d+1$.\\
Let $b=T_1\alpha^{-d}/(d+1)$. Then by the shift $n\to n+d$ in \eqref{DD}, these polynomials satisfy
\begin{eqnarray}\label{DD1}
\left\{
\begin{array}{l}
P_{n+d+1}(x)=xP_{n+d}(x)-b P_{n}(x),\quad n\geq 1,\\
P_{n}(x)=x^{n},\;\;0\leq n\leq d,\;\;\text{and }P_{d+1}(x)=xP_{d}(x)-(d+1)b P_{0}(x),
\end{array}
\right.
\end{eqnarray} 
 where we recognise the monic Chebyshev $d$-OPS of the first kind generated by (see \cite[Theorem 5.1]{Bencheikh_C})
\begin{equation}\label{Cheby}
\frac{1-dbt^{d+1}}{1-xt+bt^{d+1}}=\sum_{n\geq 0}P_n(x)t^n.
\end{equation}
Remark that
\begin{equation}\label{Cheby1}
\int_{0}^{t}\frac{1}{t}\left(\frac{1-dbt^{d+1}}{1-xt+bt^{d+1}}-1\right)dt=-\ln\left(1-xt+bt^{d+1}\right).
\end{equation}
Then, by changing the variable $t\to \alpha t$, multiplying by $\alpha_1/\alpha$ and adding 1 in  \eqref{Cheby1}, we get the generating function (with $F(t)$ as in \eqref{FFa11}),
\begin{equation}\label{Cheby2}
1-\frac{\alpha_1}{\alpha}\ln\left(1-\alpha\left(xt-\frac{T_1}{d+1}t^{d+1}\right)\right)=1+\sum_{n\geq 1}\frac{\alpha_1}{\alpha  n}P_n(x)t^n.
\end{equation}
  
\end{example}

\begin{example}
For $d=2$ we have $R(t)=T_1t^3/3$ and from \eqref{gm2} we get, for $m\geq 1$, the two expressions
\begin{equation}
\gamma_{2m+2}^{2}=\frac{T_{1}}{3}\,{\frac { 2(m+1)( 2\,m+1 ) ( \beta_{{2}}m+3\tilde{\beta}_2 ) }{ ( \beta_1m+b_{{1}}
 )  ( \beta_2m+b_2 )  ( \beta_2m+\tilde{\beta}_2 ) }}
\end{equation}
and
\begin{equation}
\gamma_{2m+1}^{2}=\frac{T_{1}}{3}\,{\frac {2m ( 2\,m+1 ) ( \beta_{{1}}(m-2)+3
\,b_{{1}}) }{ ( \beta_{{1}}m+b_{{1}} ) 
  ( \beta_{{1}}(m-1)+b_{{1}
} ) ( \beta_{{2}}m+\tilde{\beta}_2 ) }},
\end{equation}
with $\gamma_{2}^{2}=2T_{1}/(b_1b_2)$, $\beta_1=b_{3}-b_1$, $\beta_2=b_{4}-b_2$ and $\tilde{\beta}_2=2b_2-b_4$.\\
We enumerate the following forms of $F(t)$:

\textbf{A}. The first representation by \eqref{F1} and \eqref{F2}.\\

If $\beta_1\beta_2\neq 0$, then

\begin{eqnarray}\label{exp2_A}
F(t)=1+\alpha_2t^{2}\,{}_3F_{2}\left(
\begin{array}{lll}
1,&\frac{b_2}{\beta_2},&\frac{b_3}{\beta_1}\\
\frac{3}{2},&2
\end{array}
;\beta_1\beta_2\left(\frac{t}{2}\right)^2\right)
+\alpha_1t\;{}_2F_{1}\left(
\begin{array}{ll}
\frac{b_1}{\beta_1},&\frac{b_2}{\beta_2}\\
\frac{3}{2}&
\end{array}
;\beta_1\beta_2\left(\frac{t}{2}\right)^2\right).
\end{eqnarray}
The limiting cases are obtained when: $\beta_1\to 0$, $\beta_2\to 0$ or $\left(\beta_1,\beta_2\right)\to (0,0)$.

\textbf{B}. The second representation, by \eqref{F2} and \eqref{F3}, with its limiting cases.\\

1. If $\tilde{\beta}_2\beta_1\beta_2\neq 0$ we have 
\begin{eqnarray}
F(t)=1+\frac{\alpha_1}{\tilde{\beta}_2}\left[{}_2F_{1}\left(
\begin{array}{ll}
\frac{\tilde{\beta}_2}{\beta_2},&\frac{b_1}{\beta_1}\\
\frac{1}{2}&
\end{array}
;\beta_1\beta_2\left(\frac{t}{2}\right)^2\right)-1\right]
+\alpha_1t\;{}_2F_{1}\left(
\begin{array}{ll}
\frac{b_1}{\beta_1},&\frac{b_2}{\beta_2}\\
\frac{3}{2}&
\end{array}
;\beta_1\beta_2\left(\frac{t}{2}\right)^2\right).
\end{eqnarray}

2. If $\beta_1\to 0$:

\begin{eqnarray}
F(t)=1+\frac{\alpha_1}{\tilde{\beta}_2}\left[{}_1F_{1}\left(
\begin{array}{l}
\frac{\tilde{\beta}_2}{\beta_2}\\
\frac{1}{2}
\end{array}
;b_1\beta_2\left(\frac{t}{2}\right)^2\right)-1\right]
+\alpha_1t\;{}_1F_{1}\left(
\begin{array}{l}
\frac{b_2}{\beta_2}\\
\frac{3}{2}
\end{array}
;b_1\beta_2\left(\frac{t}{2}\right)^2\right).
\end{eqnarray}
with 
\begin{eqnarray}
\gamma_{2m}^{2}&=&\frac{T_{{1}}}{3b_{{1}}}\,{\frac { 2m( 2\,m-1 ) ( \beta_{{2}}(m-1)+3\,\tilde{\beta}_2 ) }{  ( \beta_{{2}}m+\tilde{\beta}_2 )  ( \beta_{{2}}(m-1)+\tilde{\beta}_2 ) }},\;\;m\geq 1\\
\gamma_{2m+1}^{2}&=&\frac{T_{1}}{b_{1}}\frac{(2m)(2m+1)}{\beta_2 m+\tilde{\beta}_2},\;\;m\geq 1,
\end{eqnarray}

3. If $\beta_2\to 0$: 

\begin{eqnarray}
F(t)=1+\frac{\alpha_1}{b_2}\left[{}_1F_{1}\left(
\begin{array}{l}
\frac{b_1}{\beta_1}\\
\frac{1}{2}
\end{array}
;b_2\beta_1\left(\frac{t}{2}\right)^2\right)-1\right]
+\alpha_1t\;{}_1F_{1}\left(
\begin{array}{l}
\frac{b_1}{\beta_1}\\
\frac{3}{2}
\end{array}
;b_2\beta_1\left(\frac{t}{2}\right)^2\right).
\end{eqnarray}
with
\begin{eqnarray}
\gamma_{2m}^{2}&=&\frac{T_1}{b_2}\frac{(2m)(2m-1)}{\beta_1 (m-1)+b_{1}},\;\;m\geq 1\\
\gamma_{2m+1}^{2}&=&\frac{T_{1}}{3b_2}\,{\frac {2m ( 2\,m+1 ) ( \beta_{{1}}(m-2)+3
\,b_{{1}}) }{ ( \beta_{{1}}m+b_{{1}} ) 
  ( \beta_{{1}}(m-1)+b_{{1}
} )  }}
,\;\;m\geq 1,
\end{eqnarray}
4. If $\tilde{\beta}_2\to 0$: 

\begin{eqnarray}\label{exp2_4}
F(t)=1+\alpha_2 t^2{}_3F_{2}\left(
\begin{array}{lll}
1&1&1+\frac{b_1}{\beta_1}\\
2&\frac{3}{2}&
\end{array}
;\beta_1b_2\left(\frac{t}{2}\right)^2\right)
+\alpha_1t\;{}_2F_{1}\left(
\begin{array}{ll}
1&\frac{b_1}{\beta_1}\\
\frac{3}{2}&
\end{array}
;\beta_1b_2\left(\frac{t}{2}\right)^2\right).
\end{eqnarray}
with $\gamma_{2}^{2}=2T_{1}/(b_1b_2)$,
\begin{eqnarray}
\gamma_{2m}^{2}&=&\frac{T_1}{3b_2}\frac{2(2m-1)}{(\beta_1 (m-1)+b_{1})},\;\;m\geq 2\\
\gamma_{2m+1}^{2}&=&\frac{T_{1}}{3b_{2}}\,{\frac {2 ( 2\,m+1 ) ( \beta_{{1}}(m-2)+3
\,b_{{1}}) }{ ( \beta_{{1}}m+b_{{1}} ) 
  ( \beta_{{1}}(m-1)+b_{{1}
} )  }}
,\;\;m\geq 1.
\end{eqnarray}
Clearly \eqref{exp2_4} is \eqref{exp2_A} with $\beta_2=b_2$.\\

5. If $\beta_1\to 0$ and $\beta_2\to 0$:

\begin{eqnarray}
F(t)&=&1+\frac{\alpha_1}{b_2}\left[{}_0F_{1}\left(
\begin{array}{l}
- \\
\frac{1}{2}
\end{array}
;b_1b_2\left(\frac{t}{2}\right)^2\right)-1\right]
+\alpha_1t\;{}_0F_{1}\left(
\begin{array}{l}
-\\
\frac{3}{2}
\end{array}
;b_1b_2\left(\frac{t}{2}\right)^2\right)\nonumber\\
&=&1+ \frac{\alpha_1}{b_2}\left(   \cosh\left(\sqrt {b_{{1}}b_2}\,t \right)-1  +\sqrt{\frac { b_2}{b_1}}\,\sinh \left( \sqrt {b_{{1}}b_2}\,t \right)
 \right).
 \end{eqnarray}
with
\begin{eqnarray}
\gamma_{n}^{2}&=&\frac{T_{1}}{b_{1}b_2}n(n-1),\;\;n\geq 2,
\end{eqnarray} 

6. If $\beta_1\to 0$ and $\tilde{\beta}_2\to 0$: 

\begin{eqnarray}
F(t)=1+\alpha_2 t^2{}_2F_{2}\left(
\begin{array}{ll}
1&1\\
2&\frac{3}{2}
\end{array}
;b_1b_2\left(\frac{t}{2}\right)^2\right)
+\alpha_1t\;{}_1F_{1}\left(
\begin{array}{l}
1\\
\frac{3}{2}
\end{array}
;b_1b_2\left(\frac{t}{2}\right)^2\right).
\end{eqnarray}
with $\gamma_{2}^{2}=2T_{1}/(b_1b_2)$,
\begin{eqnarray}
\gamma_{2m}^{2}&=&\frac{2T_{1}}{3b_{1}b_2}(2m-1),\;\;m\geq 2\\
\gamma_{2m+1}^{2}&=&\frac{2T_{1}}{b_{1}b_2}(2m+1),\;\;m\geq 1.
\end{eqnarray}

\end{example}

{\bf Acknowledgements:} We would like to thank Dr. Yanallah Abdelkader for precious help and useful discussions.

\end{document}
